\documentclass[11pt]{article}

\usepackage[utf8]{inputenc}
\usepackage{amsmath, amssymb, amsthm, amsfonts}
\usepackage{enumitem}
\usepackage[numbers,sort]{natbib}
\usepackage{hyperref}
\hypersetup{colorlinks=false, pdfborder={0 0 0}}
\usepackage{graphicx}
\usepackage{xcolor}
\usepackage{comment}
\usepackage{dsfont}
\usepackage{mathtools}
\usepackage{bm}
\usepackage{geometry}
\usepackage{booktabs}
\usepackage{multirow}
\usepackage{longtable}
\usepackage{cancel}
\usepackage{subcaption}
\usepackage{IEEEtrantools}
\usepackage{placeins}   
\usepackage{pdflscape}  
\usepackage{float}      
\usepackage{setspace}
\usepackage{tikz}
\usepackage{xltabular}
\usepackage{tabularx,makecell,array,caption}
\usepackage{pdflscape}
\usepackage{adjustbox}
\usetikzlibrary{
  arrows.meta,
  positioning,
  calc,
  shapes.geometric,
  fit
}
\usetikzlibrary{
  automata,
  backgrounds
}
\tikzset{
  box/.style  ={draw,rounded corners,align=center,inner sep=4pt,minimum height=8mm,thick},
  enc/.style  ={box,fill=gray!10},
  pool/.style ={box,fill=green!12},
  head/.style ={box,fill=blue!12},
  crit/.style ={box,fill=orange!15},
  op/.style   ={box,fill=gray!6},
  arr/.style  ={-{Latex[length=3mm]}, ultra thick}
}
\usepackage{mathtools}
\usepackage{algorithm}
\usepackage{algpseudocode}
\DeclarePairedDelimiter{\norm}{\lVert}{\rVert}

\newtheorem{definition}{Definition}
\newtheorem{lemma}{Lemma}
\newtheorem{assumption}{Assumption}
\newtheorem{remark}{Remark}
\newtheorem{proposition}{Proposition}

\newtheorem{theorem}{Theorem}
\newtheorem{example}{Example}

\newcommand{\R}{\mathbb{R}}
\newcommand{\N}{\mathbb{N}}
\newcommand{\E}{\mathbb{E}}
\newcommand{\1}{\mathbf{1}}

\renewcommand{\read}{A_{\text{read}}}
\definecolor{darkgreen}{rgb}{0, .5, 0}

\begin{document}

\title{Reinforcement Learning for Global-Local Control of Dynamic Networks with Applications to Systemic Risk}
\author{Nils Detering\thanks{Heinrich Heine University D\"{u}sseldorf, Mathematics Institute.
  \texttt{nils.detering@hhu.de}}  \and Florian Grell\thanks{Heinrich Heine University D\"{u}sseldorf, Mathematics Institute.
  \texttt{florian.grell@hhu.de}}  \and Thilo Meyer-Brandis\thanks{University of Munich, Mathematics Institute, \texttt{meyerbra@math.lmu.de} }}
\date{\today}

\maketitle

\begin{abstract}
We propose a reinforcement learning framework for solving control problems on dynamic networks, where control decisions consist of a combination of graph-level (global) and node-level (local) actions. To exploit the underlying network structure in a policy-gradient algorithm, we introduce a message-passing neural network architecture with separate global and local policy heads. The architecture ensures that the resulting policies satisfy natural permutation symmetries and transparency constraints.

Our main theoretical contribution is an analysis of the robustness and regularity properties of the proposed architecture. We derive explicit Lipschitz bounds with respect to both parameter and input perturbations and use these estimates to establish convergence guarantees for a projected policy-gradient algorithm. The resulting constants reveal how architectural choices and graph characteristics influence sensitivity to input perturbations and the convergence behavior of gradient-based optimization methods. 

As an application, we study the management of systemic risk in financial networks, where a regulator allocates capital interventions across interconnected institutions. Numerical experiments demonstrate that the proposed reinforcement learning architecture outperforms standard architectures based on feedforward neural networks while the learned policies fulfill desirable symmetry and transparency constraints. The algorithm scales to large networks and effectively combines global budget decisions with local allocation policies by exploiting the underlying graph structure. 
\end{abstract}

\tableofcontents

\section{Introduction}
A wide range of complex systems, including financial systems, communication infrastructures, transportation networks, and social systems, can be modeled as weighted, directed networks whose nodes represent individual entities and whose edges encode their interactions. As these systems often evolve over time according to stochastic dynamics, a central planner may seek to steer their dynamics toward desirable outcomes through targeted interventions. However, controlling dynamic networks is particularly challenging due to their inherently high dimensional nature and because as a result they often admit a very large number of possible states and interventions. It was shown in a deterministic setting \cite{BarabasiNetworkControl} that sparse, heterogeneous networks, which arise in many large complex systems, require a particularly large fraction of driver nodes for structural controllability. 
Indeed, analytical solutions are mainly available for highly structured settings. In the related literature on stochastic games in connected systems, such results have been obtained for chain networks \cite{feng2020directedchain}, (graphon) mean-field interaction structures \cite{fouque2015meanfield,CCGL2021,ACL2022}, and vertex-transitive graphs \cite{hu2025finite1}, typically with linear-quadratic cost functions. For more general network control problems, one usually resorts to numerical methods to compute optimal or near-optimal control strategies \cite{preciado2014optimal,gama2021graph, hu2026finite2}. A recent stream of literature, in particular, focuses on reinforcement learning-based algorithms as a way to derive effective intervention policies or value functions \cite{meirom2021controlling,darvariu2024graph,gammelli2023graph}. 

Interventions in graph control problems may act at different levels. In the literature on Graph Neural Networks (GNNs), one often distinguishes between graph-level, edge-level, and node-level tasks \cite{zhou2020graph}. Following this terminology, we distinguish between graph-level, edge-level, and node-level decisions, using the terms action and decision interchangeably in this context. A graph-level decision means a single decision applies to the entire graph, whereas edge-level and node-level decisions specify decisions that apply to individual edges and nodes, respectively. These categories are not necessarily disjoint, since a control action may contain decisions at several levels. Throughout the paper, we use the terms global and local for graph-level and node-level decisions, respectively. Mixed decisions may be structured hierarchically, with one decision determining or restricting subsequent decisions. For example, Feng et al.~\cite{feng2025sequential} study sequential stochastic optimization problems by first determining the number of nodes to be selected at each time step and subsequently the corresponding nodes.

As graphs lead to large state spaces in the underlying Markov decision processes (MDPs), many applications from different fields focus on policy-based approaches, 
\cite{Lopez2025,deihim2024initial,hart2020graph}, to derive effective control policies. Tabular Q-learning requires action values to be estimated from repeated visits to the relevant state--action pairs and is therefore not suitable for such large state spaces \cite{WatkinsDayan1992,RL-SuttonBarto}. Deep Q-learning addresses this problem by approximating the action-value function with a neural network \cite{Mnih2015DQN}, but still requires the approximation and subsequent maximization of this function over the admissible actions. Policy-based methods instead directly parametrize and optimize the control policy based on trajectories generated by the current policy \cite{policy-gradient-methods}, thereby concentrating the updates on the regions of the state--action space visited under the current policy.

In this work, we propose a policy-based reinforcement learning approach for deriving effective control policies for the general control of dynamic networks when control policies may consists of decisions on both, a global graph-level and a local node-level, with graph-level and node-level only decision as special cases. Our approach ensures that control policies fulfill some natural fairness and transparency constraints. For example, graph-level decisions are always permutation-invariant, which ensures that they do not change under graph isomorphisms. This particularly ensures that a relabeling of nodes does not change the graph-level decision. Node-level decisions, by contrast, are permutation-equivariant, meaning that when nodes are reshuffled, the node-level decisions are reshuffled accordingly. In addition, policies can be enforced to depend only on the {\em k-hop} neighborhood of the nodes. This particularly ensures that two nodes with isomorphic {\em k-hop} neighborhoods are treated equally. It also makes the resulting decisions transparent and, for small $k$, allows them to be explained based only on local information, which is often available to the individual or institution represented by the node. In some instances, such constraints may actually be required by law, in other instances, it may support acceptance for certain control rules.

As graph-based data differs substantially from non-graph-based data, classical feedforward neural networks are often not very effective in processing graph data. GNNs instead offer a class of models that has been successfully used to process such data. See, for example, Kipf and Welling~\cite{kipf2017} for an influential early architecture and \cite{zhou2020graph,gravina2024benchmark} for more recent surveys and empirical comparisons. Accordingly, our policy-based reinforcement learning approach uses Message-Passing Neural Networks (MPNNs) to derive graph embeddings based on which control policies are then derived. Embeddings can either be shared or separate for the node-level and graph-level decisions. This architecture allows for full control of the available information that is used when processing node- and graph-level decisions. We formulate a general reinforcement learning framework for controlling dynamic networks that unifies graph-level, node-level, and joint control decisions under a common policy architecture. The framework allows permutation symmetries and locality constraints to be enforced by construction, yielding policies that satisfy natural fairness and transparency requirements. We analyze a projected policy-gradient algorithm for this setting and establish convergence guarantees for the general framework, including the important special cases of purely graph-level and purely node-level control.

Policy-based approaches are common in the reinforcement learning literature and substantial research has studied their convergence properties. Some results analyze tabular policies or optimization directly in policy space \cite{mei2020global,cen2022fast,xiao2022convergence,bhandari2024global,liu2025convergence}, whereas others consider parameterized policies under regularity assumptions such as boundedness and Lipschitz continuity of the policy score function \cite{Onthetheoryofpolicygradientmethods:optimalityapproximationanddistributionshift,Convergenceofactor-criticmethodswithmulti-layerneuralnetworks,OntheConvergenceofContinuousSingle-timescaleActor-critic}. For policies represented by feedforward neural networks, global Lipschitz continuity or boundedness of the derivatives with respect to the parameters generally holds only when the parameters are restricted to a bounded set \cite{Teichmann}. Similar regularity issues arise for graph-based policies. To overcome this problem and derive convergence results for our algorithm, we restrict the parameter space through projection. This allows us to combine robustness and regularity results that we derive for our architecture with generic results \cite{Borowski, Ghadimi2016} for projected stochastic-gradient methods to derive convergence guarantees for our algorithm. The resulting rates depend on both, graph specific properties such as the minimal and maximal degree as well as architecture specific properties, depth of the message-passing networks, activation function, and aggregation functions. Our results allows one to understand how the graph topology and the policy architecture influence the convergence rates. In addition our numerical experiments investigate the tradeoff between locality-based fairness and transparency on one hand, and control performance on the other. 

Our main contributions are summarized as follows:
\begin{itemize}
\item We formulate a class of infinite-horizon MDPs on dynamic graphs with state-dependent action spaces that combine graph-level and node-level decisions. Purely graph-level and purely node-level control problems are included as special cases.

\item We introduce a message-passing policy architecture consisting of a permutation-invariant global head and a shared permutation-equivariant local head. Adapting the idea of action branching \cite{tavakoli2018action} to policy-based reinforcement learning, the global and local decisions are produced jointly within a single factorized policy.

\item We provide an asymptotic convergence result, as well as a finite-time bound, for a projected version of the REINFORCE algorithm from \cite{REINFORCE} with geometric rollouts. To obtain these results, we first establish the required regularity properties of the proposed graph-based policy, including explicit parameter-side Lipschitz bounds for fixed graph inputs. 
Moreover, we derive input-side Lipschitz bounds with respect to perturbations of the augmented node features and the weighted adjacency matrix for fixed parameters, separating the effects of node-feature and edge-weight perturbations.

\item We apply the proposed framework to the control of systemic risk in financial networks and demonstrate numerically that the global--local policy can effectively combine global budget decisions with local allocation decisions and that the method scales to large graphs.
\end{itemize}

The application considered in this paper concerns the control of systemic risk in financial networks. Financial networks consist of interconnected institutions with bilateral obligations and are therefore commonly modeled as weighted directed graphs \cite{BEST04,UW04,NYYA07,CP14,ACP14}. The interconnectedness of the institutions creates channels through which financial distress can propagate. If one institution becomes unable to meet its liabilities, the resulting losses may spread across the network and potentially trigger further defaults \cite{glasserman2016contagion,acemoglu2015systemic}. In our model, the dynamics of the financial system are driven by exogenous shocks to asset prices and endogenous amplification effects arising from an extended Eisenberg--Noe clearing model \cite{EN01}. The interaction of these mechanisms may lead to complex nonlinear effects when institutions become distressed.

A related line of research considers interventions by a central regulator. These interventions include lender-of-last-resort policies \cite{CAPPONI2015152}, targeted capital injections, bailouts, and equity infusions \cite{minca2014optimal,amini2017optimal,Papachristou23}. 
These approaches seek optimal intervention strategies under different assumptions on the timing, available budget, and admissible allocation of interventions. 
Our approach is complementary to this literature. We construct a scalable graph-based policy that jointly determines the total intervention budget and its allocation across institutions, while enforcing locality and structural consistency through the policy architecture.

In the numerical experiments, we train the proposed policy architecture using a policy-gradient algorithm and evaluate its performance on a collection of small benchmark networks and larger randomly generated core--periphery networks.

The remainder of the paper is organized as follows. Section~\ref{sec:MDP} formulates the intervention problem as an MDP and introduces the global--local control framework. Section~\ref{sec:gnn-architecture} defines the graph-based architecture. Section~\ref{sec:robustness-convergence} contains the Lipschitz estimates and the convergence analysis of projected REINFORCE. Section~\ref{sec:numerical-results} introduces the application to systemic risk and presents the numerical experiments. The technical proofs are collected in the appendix.

\section{Markov Decision Process on Dynamic Graphs}
\label{sec:MDP}

We first introduce a general framework for a discrete-time, infinite-horizon MDP whose states are given by dynamic graphs. A graph is described by a fixed set of $n$ nodes, indexed by $[n]=\{1,\ldots,n\}$, each equipped with a feature vector, and a weighted adjacency matrix. In a dynamic graph, the set of nodes is considered fixed, while the node features and edge weights may vary with time. 
More precisely, let $V\subset\mathbb R^m$ and $W\subset\mathbb R_{\geq 0}$ be Borel subsets, equipped with their relative Borel $\sigma$-algebras, where $V$ is the node-feature space and $W$ is the set of possible scalar edge weights. For simplicity, we restrict attention to non-negative edge weights. The feature space $V$ may be a product of different component spaces, for instance $V=V_1\times\cdots\times V_m$, allowing different node features to have different value ranges. We assume $0\in W$ and exclude self-loops. We define the graph-valued state space, equipped with its relative Borel $\sigma$-algebra, by
\begin{equation}
\label{eq:graph-state-space}
\mathcal S := V^n \times \left\{ E\in W^{n\times n}: E_{ii}=0 \text{ for all } i\in[n] \right\}.
\end{equation}
An element $s=(X,E)\in\mathcal S$ consists of a node-feature matrix $X\in V^n$ and an edge-weight or adjacency matrix $E\in W^{n\times n}$. Global features can be included in this representation by appending them to the feature vector of each node. With an adaptation of the policy architecture introduced in Section~\ref{sec:gnn-architecture}, more general edge features could also be incorporated.

We consider a state-dependent mixed action space consisting of a graph-level decision and node-level decisions, which we refer to as the global and local decisions, respectively. Let \(\mathcal A_{\mathrm{glob}}\) and \(\mathcal A_{\mathrm{node}}\) be finite nonempty sets of admissible global decisions and local decisions, respectively. All finite action spaces are equipped with their discrete $\sigma$-algebras. We assume that there is a distinguished dummy node action $a_{\mathrm{node}}^0\in\mathcal A_{\mathrm{node}}$ and an activation map \(\eta:\mathcal A_{\mathrm{glob}}\to\{0,1\}\).
For each state $s\in\mathcal S$ and each node $i\in\{1,\ldots,n\}$, let \(\mathcal A_{\mathrm{node},i}(s)\subseteq\mathcal A_{\mathrm{node}}\) denote the set of node-level decisions that are admissible for node $i$ in state $s$, with $a_{\mathrm{node}}^0\in\mathcal A_{\mathrm{node},i}(s)$. We assume that $\{s\in\mathcal S:b\in\mathcal A_{\mathrm{node},i}(s)\}$ is a Borel subset of $\mathcal S$ for every node $i$ and every $b\in\mathcal A_{\mathrm{node}}$. 
The admissible action space in state $s$ is given by
\begin{equation}
\label{eq:general-mixed-action-space}
\mathcal A(s)= \Bigl\{(a_{\mathrm{glob}},a_{\mathrm{node}}^0,\ldots,a_{\mathrm{node}}^0):\eta(a_{\mathrm{glob}})=0\Bigr\} \cup \Bigl\{(a_{\mathrm{glob}},a_1,\ldots,a_n):\eta(a_{\mathrm{glob}})=1,\ a_i\in\mathcal A_{\mathrm{node},i}(s)\Bigr\}.
\end{equation}
Thus, an action is written as \(a=(a_{\mathrm{glob}},a_1,\ldots,a_n),\)
where $a_{\mathrm{glob}}$ is a global decision and $a_i$ is the local decision assigned to node $i$. If $\eta(a_{\mathrm{glob}})=0$, the global decision deactivates the local decisions and only the dummy node actions are selected. If $\eta(a_{\mathrm{glob}})=1$, the local decisions are active. The precise way in which the environment interprets such an action is model-specific. Purely node-level decision problems are obtained as a special case by choosing a singleton global action set with $\eta\equiv 1$, whereas purely graph-level decision problems are obtained by choosing $\eta\equiv 0$.

A MDP on \(\mathcal S\) is given by \((\mathcal S,\mathcal A,p,r,\gamma)\), where $\mathcal A=\bigcup_{s\in\mathcal S}\mathcal A(s)$ is the mixed action space with state-dependent admissible action sets $\mathcal A(s)$, $p(\cdot\mid s,a)$ is a Markov transition kernel, $r:\mathcal S\times\mathcal A\times\mathcal S\to\mathbb R$ is a Borel measurable deterministic one-step reward function and $\gamma\in(0,1)$ is the discount factor. We assume that $r$ is bounded on the set of admissible state-action-state triples by some constant $R_{\max}$.

A policy \(\pi=(\pi_t)_{t\in\mathbb N_0}\) is a sequence of decision rules such that, for each \(t\in\mathbb N_0\), \(\pi_t\) assigns to every history \(h_t=(s_0,a_0,\ldots,s_{t-1},a_{t-1},s_t)\) a probability distribution on the admissible actions in the current state, that is,
$$\pi_t(\cdot \mid h_t) \in \mathcal D(\mathcal A(s_t)),$$
where \(\mathcal D(\mathcal A(s_t))\) denotes the set of probability distributions on \(\mathcal A(s_t)\). 
For every $t\in\mathbb N_0$ and every fixed action $a\in\mathcal A$, we require the map $h_t\mapsto\pi_t(a\mid h_t)$ to be Borel measurable on the history space $(\mathcal S\times\mathcal A)^t\times\mathcal S$.
We call such a policy admissible. Thus, a decision rule specifies which action to choose given the observed history of states and actions.

For an initial distribution $\mu$ on $\mathcal S$ and a policy $\pi$, we denote by $\mathbb P^\pi_\mu$ the probability measure on trajectories induced by
$$S_0\sim\mu,\qquad A_t\sim\pi_t(\cdot\mid H_t),\qquad S_{t+1}\sim p(\cdot\mid S_t,A_t),$$
where $H_t=(S_0,A_0,\ldots,S_{t-1},A_{t-1},S_t)$ denotes the random history up to time $t$. The corresponding expectation is denoted by $\mathbb E^\pi_\mu$. If the initial state is deterministic, i.e. $S_0=s_0$, we write $\mathbb P^\pi_{s_0}$ and $\mathbb E^\pi_{s_0}$ instead. Throughout the paper, lowercase letters $s,a,s'$ denote fixed state and action values, whereas $S_t,A_t$ denote the corresponding random variables along a trajectory. The corresponding one-step reward process is defined by \(R_t:=r(S_t,A_t,S_{t+1})\).

For a fixed admissible policy $\pi$, the corresponding state-value function is
\begin{align}
\label{eq:state-value-function}
V^\pi(s):=\mathbb E_s^\pi\left[\sum_{t=0}^\infty \gamma^t R_t\right], \qquad s\in\mathcal S.
\end{align}
The state-action value function is defined by
\begin{align}
\label{eq:state-action-value-function}
Q^\pi(s,a):=\mathbb E_{s,a}^\pi\left[\sum_{t=0}^\infty \gamma^t R_t\right],
\qquad s\in\mathcal S,\ a\in\mathcal A(s),
\end{align}
where $\mathbb E_{s,a}^\pi$ denotes expectation under the dynamics with $S_0=s$, $A_0=a$, and policy $\pi$ used from time $1$ onward. The objective is to maximize the expected discounted return under the initial distribution $\mu$, that is,
\begin{align}
\label{eq:general-objective}
J_\mu(\pi) := \mathbb E_\mu^\pi\left[\sum_{t=0}^\infty \gamma^t R_t\right]
= \mathbb E_\mu\left[V^\pi(S_0)\right].
\end{align}
Under standard regularity assumptions, for instance when $\mathcal S$ is Polish, the admissible action sets are finite, and the one-step rewards are bounded, the discounted unrestricted control problem over all admissible policies admits an optimal stationary deterministic policy, see \cite{MDP-Puterman}. We therefore restrict attention to parametrized, stationary policies, which are introduced in Section~\ref{sec:gnn-architecture}.

For a stationary policy, the action distribution depends only on the current state and we write \(\pi(\cdot\mid s)\in\mathcal D(\mathcal A(s))\).
For a parametric stationary policy $\pi_\theta$, we write
$$V_\theta:=V^{\pi_\theta},\qquad Q_\theta:=Q^{\pi_\theta},\qquad J_\mu(\theta):=J_\mu(\pi_\theta).$$
If the initial distribution $\mu$ is fixed, we suppress it from the notation. If the initial state is deterministic, i.e. $\mu=\delta_{s_0}$, we also write
$J_{s_0}(\theta):=V_\theta(s_0).$

In Subsection~\ref{subsec:financial-network-model}, we apply the above framework to an intervention problem in a financial network.
\section{Global-Local Policy Architecture} \label{sec:gnn-architecture}

In this section, we introduce the parametrized policies used for the graph-valued MDP formulation from Section~\ref{sec:MDP}. As the numbering of the nodes in a graph is arbitrary, renumbering the nodes should not influence the graph-level decision, while the node-level decisions should change accordingly. Hence the policy should respect the symmetries of the underlying graph. More precisely, the graph-level decision should be permutation invariant, whereas the node-level decision rule should be permutation equivariant.

\begin{definition}\label{def:permutation-equivariance-invariance}
Let $m_1,m_2\in\N$. A function $f_{\mathrm{inv}}:\R^{n \times m_1} \times \R^{n \times n} \to \R^{m_2}$
is called permutation invariant if
$$f_{\mathrm{inv}}(PX, PEP^\top) = f_{\mathrm{inv}}(X,E)$$
for all permutation matrices $P \in \{0,1\}^{n \times n}$.

A function $f_{\mathrm{eq}}:\R^{n \times m_1} \times \R^{n \times n} \to \R^{n \times m_2}$
is called permutation equivariant if
$$f_{\mathrm{eq}}(PX, PEP^\top) = P\,f_{\mathrm{eq}}(X,E).$$
\end{definition}

Many GNNs, and in particular the MPNNs introduced below, offer a natural way to parametrize such functions. They share weights across nodes and aggregate information over local neighborhoods, thereby enforcing the desired permutation symmetries by construction. Before defining our MPNN layers, we introduce the required notation.

Given a state $s=(X,E)\in\mathcal S$, the policy network uses the node-feature matrix $X$ and the weighted adjacency matrix $E$. We identify $X\in V^n$ with its matrix representation in $\R^{n\times m}$ and write $x^{(0)}:=X.$
For a node $i$ in the graph associated with $s=(X,E)$, we write $e_{ij}:=E_{ij}.$
The sets of incoming and outgoing neighbors of node $i$ are given by
$$\mathcal{N}^+(i) :=\{j \in [n] \mid e_{ji} > 0 \} \quad \text{and}\quad \mathcal{N}^-(i) :=\{ j \in [n] \mid e_{ij} > 0 \}.$$

In our MPNN layers, we explicitly include and distinguish between incoming and outgoing edges. As shown in \cite{WL-directed} for directed unweighted graphs, considering both directions separately can increase the expressivity of the MPNN. Empirically, this distinction is especially useful in heterogeneous networks, where the incoming and outgoing neighbors of a node may differ substantially in their features.

The following MPNN layer is a specific instance of the general message-passing framework of \cite{Gilmer}. We denote the output dimension of the node features in the $k$-th layer by $d_k$.

\begin{definition}[MPNN layer]\label{def:MPNN-layer}
Let $\bigoplus$ denote a permutation invariant aggregation function, such as $\operatorname{sum}$ or $\operatorname{mean}$. Further let $W^{(k)} \in \R^{d_k \times 3d_{k-1}}$ and $b^{(k)} \in \R^{d_k}$ be learnable parameters given by a feedforward neural network layer and $\sigma$ an activation function. Given edge weights $e_{ij}$ from the underlying graph, a message-passing neural network layer updates the node embeddings $x^{(k-1)}=(x_1^{(k-1)},\ldots,x_n^{(k-1)}) \in \R^{n d_{k-1}}$ for node $i$ through
\begin{align*}
x_i^{(k)} = \sigma \odot \left( W^{(k)} \left[x_i^{(k-1)} \,\,\bigoplus_{j \in \mathcal{N}^+(i)} e_{ji}x_j^{(k-1)} \,\,\bigoplus_{j \in \mathcal{N}^-(i)} e_{ij} x_j^{(k-1)}\right] + b^{(k)} \right) \in \R^{d_k},
\end{align*}
where we concatenate vectors in square brackets and $\odot$ is the component-wise application.
\end{definition}

\begin{remark}\label{rem:MPNN-definition}
For simplicity, we use the same aggregation operator $\bigoplus$ for incoming and outgoing neighbors. Using different aggregators for the two directions would be straightforward. Moreover, we only consider edge-weighted messages of the form $e_{ij}x_j^{(k-1)}$. For sum or mean aggregation, the map
$$x^{(k-1)} \mapsto \left[x_i^{(k-1)} \,\, \bigoplus_{j \in \mathcal{N}^+(i)} e_{ji} x_j^{(k-1)}\,\,  \bigoplus_{j \in \mathcal{N}^-(i)} e_{ij} x_j^{(k-1)}\right]$$
can be represented by a single linear operator, which significantly simplifies the proof of the Lipschitz bounds. Our approach can be adapted to more general messages depending linearly on the edge weights. This is briefly discussed in Appendix~\ref{app:mpnn-matrix}.
\end{remark}

\begin{remark}\label{rem:fairness}
For any given node, a $K$-layer MPNN updates its features using the information contained in its $K$-hop subgraph, which includes all nodes and edges within a distance of $K$. For small values of $K$, 
this locality property brings two major advantages. First, it makes policies based on MPNN architectures transparent, since the information that influences the node-level decision is restricted to a clearly identifiable neighborhood of the node. 
Second, it introduces a natural form of locality-based fairness. Nodes within the same graph with identical $K$-hop subgraphs receive the same node embeddings. This is particularly relevant in regulatory settings, where interventions should be explainable and justifiable to the affected institutions. 
\end{remark}

Our MPNN formulation corresponds to a deliberately simple class of GNNs. While more expressive architectures exist, such as attention-based message-passing \cite{GAT, GATv2}, permutation-equivariant neural networks (PENNs) and their extensions \cite{gonon2026universality}, or graph transformer models \cite{GraphTransformer1, GraphTransformer2}, we focus on this setting because it allows for an explicit and more interpretable Lipschitz analysis. Similar restrictions are common in theoretical work on generalization and stability of graph neural networks \cite{StableFairGraphRepresentationLearningWithLipschitzConstraint, Towardsaunifiedframeworkforfairandstablegraphrepresentationlearning, DBLP:conf/iclr/LiaoUZ21,pmlr-v206-ju23a}.

A $K$-layer MPNN encoder is obtained by successively applying $K$ MPNN layers to the initial node features and edge weights. The resulting node embeddings are then used to parametrize the policy.
Motivated by the mixed structure of the action space~\eqref{eq:general-mixed-action-space}, we use a factorized policy architecture consisting of a graph-level, or global, head and a shared node-level, or local, head on top of the MPNN outputs. The global head produces the distribution of the graph-level decision, while the local head is applied with shared parameters to all node embeddings. This yields a policy class that is permutation invariant on the graph level and permutation equivariant on the node level, while the number of trainable parameters remains independent of $n$.

More precisely, given a state $s=(X,E)\in\mathcal S$, we first compute node embeddings
\begin{align*}
x^{(K)}=f^{(K)}(\theta^K,X,E)
\end{align*}
by a $K$-layer MPNN encoder, where $\theta^K$ denotes the parameters of the encoder. The global head uses an invariant readout to produce a distribution on $\mathcal A_{\mathrm{glob}}$, whereas the shared local head maps each node embedding $x_i^{(K)}$ to a distribution on the corresponding state-dependent admissible node actions.

For readability, the following definition is kept at an informal level and focuses on the main architectural idea. A fully formal definition of the policy is deferred to Appendix~\ref{app:policy-heads}. For now, we denote the set of all trainable parameters by $\theta$.

\begin{definition}[Policy heads]\label{def:heads}
Let $s\in\mathcal S$ be a state and let $x=(x_1,\ldots,x_n)\in \R^{n d_K}$ denote the corresponding node embeddings produced by the MPNN encoder. Let $\operatorname{READ}^{G}: \R^{n d_K} \to \R^{m_1}$ be a permutation invariant readout function. Let $W^G,W^N$ and $b^G,b^N$ be learnable parameters given by feedforward neural network layers of suitable dimensions.

The global head is defined by
$${\pi}_\theta^{\mathrm{glob}}(\cdot\mid s) := \operatorname{softmax}\left(W^G \operatorname{READ}^{G}(x) + b^G\right)\in \mathcal{D}(\mathcal{A}_{\mathrm{glob}}).$$

For each node $i\in[n]$, the local head is defined by
$${\pi}_\theta^i(\cdot\mid s) := \operatorname{softmax} \left(\operatorname{mask}_{s,i}(W^N x_i + b^N)\right) \in \mathcal{D}(\mathcal{A}_{\mathrm{node},i}(s)),$$
where $\operatorname{mask}_{s,i}$ assigns zero probability to node-level actions that are not admissible for node $i$ in state $s$. Hence, the node-level policy satisfies the state-dependent feasibility constraint by construction.

The complete policy $\pi_\theta$, which factorizes into the graph-level decision and the node-level decisions, is given by ${\pi}_\theta(\cdot\mid s)\in\mathcal{D}(\mathcal{A}(s))$ with
\begin{equation}
\pi_\theta(a \mid s) 
= \pi_\theta^{\mathrm{glob}}(a_{\mathrm{glob}} \mid s)
\Biggl(\prod_{i=1}^n \pi_\theta^{i}(a_i \mid s)\Biggr)^{\eta(a_{\mathrm{glob}})},
\qquad
a=(a_{\mathrm{glob}},a_1,\ldots,a_n)\in\mathcal{A}(s).
\label{eq:policy-def}
\end{equation}
\end{definition}

Let
$$\mathcal A_{\mathrm{glob}}^0:=\{a_{\mathrm{glob}}\in\mathcal A_{\mathrm{glob}}:\eta(a_{\mathrm{glob}})=0\},
\qquad
\mathcal A_{\mathrm{glob}}^1:=\{a_{\mathrm{glob}}\in\mathcal A_{\mathrm{glob}}:\eta(a_{\mathrm{glob}})=1\}.$$
The above policy is a probability distribution on the admissible action space, since
\begin{align*}
\sum_{a \in \mathcal{A}(s)} \pi_\theta (a \mid s)
=& \sum_{a_{\mathrm{glob}}\in\mathcal A_{\mathrm{glob}}^0} \pi_\theta^{\mathrm{glob}}(a_{\mathrm{glob}}\mid s)
+ \sum_{a_{\mathrm{glob}}\in\mathcal A_{\mathrm{glob}}^1} \pi_\theta^{\mathrm{glob}}(a_{\mathrm{glob}}\mid s)\prod_{i=1}^n \sum_{a_i\in\mathcal A_{\mathrm{node},i}(s)}\pi_\theta^{i}(a_i \mid s) \\
=& \sum_{a_{\mathrm{glob}}\in\mathcal A_{\mathrm{glob}}} \pi_\theta^{\mathrm{glob}}(a_{\mathrm{glob}}\mid s)=1.
\end{align*}

The logarithmic form of the policy is therefore given by
\begin{align*}
\log \pi_\theta(a \mid s)
= \log \pi_\theta^{\mathrm{glob}}(a_{\mathrm{glob}} \mid s) + \eta(a_{\mathrm{glob}}) \sum_{i=1}^n\log \pi_\theta^{i}(a_i \mid s).
\end{align*}
Thus, the score function is
\begin{align}
g_{(s,a)}(\theta) := \nabla_\theta\log \pi_\theta(a \mid s)
= \nabla_\theta\log \pi_\theta^{\mathrm{glob}}(a_{\mathrm{glob}} \mid s)+\eta(a_{\mathrm{glob}})\sum_{i=1}^n\nabla_\theta \log \pi_\theta^{i}(a_i \mid s).
\label{eq:logpolicy}
\end{align}

\begin{remark}[Gradient flow under inactive node-level actions]
\label{rem:null-budget-gradient}
The policy in \eqref{eq:policy-def} is chosen so that, whenever $\eta(a_{\mathrm{glob}})=0$, the realized action is independent of the node-level outputs. Consequently, the local head receives no gradient signal in this case. We do not impose the converse restriction. In particular, it may happen that the realized effect of an action does not depend on the graph-level decision for certain node-level choices, while the global head may still receive gradient updates. Excluding such cases would require a more involved renormalized policy representation, which would substantially complicate the Lipschitz analysis in Section~\ref{sec:robustness-convergence}. We therefore retain the simpler factorized parametrization.
\end{remark}

The above construction allows the different heads of the policy architecture to either share an encoder or utilize separate ones. Figure~\ref{fig:policy-architecture} visualizes the corresponding architectures, both of which are evaluated in Section~\ref{sec:numerical-results}. Common choices for the readout functions correspond to those used for aggregation, with sum or mean readout being particularly popular.

\begin{figure}[t]
  \centering
  \noindent
  \begin{minipage}[t]{0.49\textwidth}
    \centering
    \resizebox{\linewidth}{!}{%
      \begin{tikzpicture}[node distance=7mm and 12mm]
        \node[op]   (in)   {\textbf{Graph} \\ $(X,E)$};
        \node[enc, right=12mm of in] (MPNN)  {\textbf{MPNN}};
        \node[op,  right=12mm of MPNN] (outputMPNN)   {\textbf{Node Embeddings} \\ $x=\{x_i\}_{i=1}^{n}$};

        \draw[arr] (in) -- (MPNN);
        \draw[arr] (MPNN) -- (outputMPNN);

        \node[pool, above right=12mm and 14mm of outputMPNN] (Readout_G) {\textbf{Readout} \\ $r=\operatorname{READ^G}(x)$};
        \node[head, right=10mm of Readout_G] (global) {\textbf{Global Head} \\ $\operatorname{softmax}(W^G r + b^G)$};
        \draw[arr] (outputMPNN) |- (Readout_G);
        \draw[arr] (Readout_G) -- (global);

        \node[head, right=14mm of outputMPNN, minimum width=40mm] (nodehead)
          {\textbf{Local Head (shared)} \\ per node: \\ $\operatorname{softmax}(W^Nx_i+b^N)$};
        \draw[arr] (outputMPNN) -- (nodehead);
      \end{tikzpicture}%
    }

    \vspace{1mm}
    {\scriptsize (a) Shared encoder for global and local head .}
  \end{minipage}\hfill%
  \begin{minipage}[t]{0.49\textwidth}
    \centering
    \resizebox{\linewidth}{!}{%
      \begin{tikzpicture}[node distance=7mm and 12mm]
        \node[op]   (in)   {\textbf{Graph} \\ $(X,E)$};
        \node[enc, above right=12mm and 12mm of in] (MPNN)  {\textbf{MPNN}};
        \node[op,  right=12mm of MPNN] (outputMPNN)   {\textbf{Node Embeddings} \\ $x=\{x_i\}_{i=1}^{n}$};

        \draw[arr] (in) |- (MPNN);
        \draw[arr] (MPNN) -- (outputMPNN);

        \node[enc, right=12mm of in] (MPNN2) {\textbf{MPNN}};
        \node[op,  right=12mm of MPNN2] (outputMPNN2) {\textbf{Node Embeddings} \\ $x=\{x_i\}_{i=1}^{n}$};        

        \node[pool, right=14mm of outputMPNN] (Readout_G) {\textbf{Readout} \\ $r=\operatorname{READ^G}(x)$};
        \node[head, right=10mm of Readout_G] (global) {\textbf{Global Head} \\ $\operatorname{softmax}(W^G r+b^G)$};
        \draw[arr] (outputMPNN) -- (Readout_G);
        \draw[arr] (Readout_G) -- (global);

        \node[head, right=14mm of outputMPNN2, minimum width=40mm] (nodehead)
          {\textbf{Local Head (shared)} \\ per node: \\ $\operatorname{softmax}(W^N x_i+b^N)$};
        \draw[arr] (outputMPNN2) -- (nodehead);

        \draw[arr] (in) -- (MPNN2);
        \draw[arr] (MPNN2) -- (outputMPNN2);

      \end{tikzpicture}%
    }

    \vspace{1mm}
    {\scriptsize (b) Separate encoders for global and local head.}
  \end{minipage}
  \caption{Architectures built from a message-passing encoder and invariant readouts.}
  \label{fig:policy-architecture}
\end{figure}
\section{Robustness and Convergence Analysis}
\label{sec:robustness-convergence}
In this section, we first analyze the sensitivity of the MPNN encoder and the induced policy to parameter and input perturbations. These bounds then provide the regularity estimates used in our convergence analysis below. We state the resulting Lipschitz estimates only at the level of their asymptotic order. Up to architecture-dependent multiplicative constants, the same orders are obtained for a shared encoder architecture and for architectures with separate encoders for the global and local components, provided that the separate encoders have the same architecture. The proofs of all claims, as well as more precise constants, are given in the appendix.

\subsection{Architecture Dependent Lipschitz Bounds}
\label{subsec:Lipschitz}
This subsection establishes two types of Lipschitz estimates for the MPNN-based policy architecture. The first type concerns perturbations of the trainable parameters and is the key regularity input for the convergence analysis in Subsection~\ref{subsec:REINFORCE}. In particular, we derive bounds for the MPNN encoder, the logarithmic policy, and the corresponding score function as functions of the network parameters. The second type concerns perturbations of the graph input. These estimates are not needed for the projected policy-gradient argument, but provide complementary robustness information and make explicit how the architecture reacts to changes in node features and edge weights.

There is already a substantial literature on Lipschitz and stability properties of graph neural networks with respect to perturbations of the graph input, see e.g.~\cite{StableFairGraphRepresentationLearningWithLipschitzConstraint, Towardsaunifiedframeworkforfairandstablegraphrepresentationlearning, AligningRelationalLearningwithLipschitzFairness, Lipschitznormalizationforselfattentionlayerswithapplicationtographneuralnetworks}. Our input-side estimates are related to this line of work, but are tailored to the directed, edge-weighted MPNN architecture considered here. In particular, they distinguish between perturbations of the node features and perturbations of the weighted adjacency matrix, and they allow us to compare the graph-dependent constants arising under sum and mean aggregation.

However, for the projected policy-gradient analysis, we need regularity estimates for the logarithmic policy and the score function as functions of the trainable parameters. Parameter-perturbation analyses for graph neural networks, such as \cite{DBLP:conf/iclr/LiaoUZ21, pmlr-v206-ju23a}, study how changes in the trainable parameters affect the output of a graph neural network. However, these works do not provide explicit Lipschitz bounds for the full MPNN viewed as a function of its parameters in the form required for our analysis, nor do they consider Lipschitz bounds for the derivative of the MPNN with respect to its parameters. To the best of our knowledge, explicit bounds of this kind have not yet been established for MPNNs, while related estimates are available for standard neural networks, for example in \cite{Low-RankPlusSparseDecompositionofCovarianceMatricesUsingNeuralNetworkParametrization, Teichmann}. Our analysis follows a similar strategy, adapted to the message-passing structure and to the actor architecture introduced in Section~\ref{sec:gnn-architecture}.

All spaces considered throughout can be viewed as subsets of Euclidean spaces. Accordingly, $\norm*{\cdot}$ denotes the Euclidean norm on vectors and the induced operator norm on matrices, while $\norm*{\cdot}_\infty$ denotes the uniform norm.

\begin{definition}[Lipschitz continuous]\label{def:Lipschitz}
Let $\mathcal{X}\subset \R^l$ and $\mathcal{Y}\subset \R^m$. A function $f:\mathcal{X}\to \mathcal{Y}$ is called Lipschitz continuous with constant $L$ if, for all $x,y \in \mathcal{X}$,
$\norm*{f(x)-f(y)} \leq L \norm*{x-y}. $
\end{definition} 

With a slight abuse of notation, we denote by $f_{\max}\geq 0$ a finite entrywise bound on the initial node features $x^{(0)}$, i.e., $|X_{i,j}|\leq f_{\max}$ for all $i=1,\ldots,n$ and $j=1,\ldots,d_0.$
For a fixed state, $f_{\max}$ always exists. For statements on a set of states, the bound is, if possible, chosen uniformly over that set. 

The next assumption is required in order to derive global Lipschitz bounds.

\begin{assumption}[Bounded parameters]
\label{assum:bounds}
We assume that there exist constants $\beta_1,\ldots, \beta_K, \beta_G, \beta_N > 0$ such that 
\begin{align*}
\norm*{W^{(i)}} \leq \beta_i \text{ for all } i \in \{1,\ldots,K\}, \quad \norm*{W^G} \leq \beta_G, \quad \norm*{W^N} \leq \beta_N.   
\end{align*} 
Here, $W^{(i)}$ denotes the weight matrix in layer $i$, and $W^G,W^N$ denote the weights of the global and shared local head, respectively. In particular, this means that the parameter space is assumed to be a closed subset of a Euclidean space.

Moreover, we assume that the activation function $\sigma$ is twice differentiable with bounded derivatives. Define 
\begin{align*}
s_0:=\max\left\{\norm*{\sigma}_\infty^2,f_{\max}^2\right\},\qquad s_1:=\norm*{\sigma'}_\infty^2, \qquad s_2:=\norm*{\sigma''}_\infty^2.
\end{align*}
For notational convenience, we further assume that $s_0 d_k\geq 1$ for all $k\in\{0,\ldots,K\}$.
\end{assumption}

Assuming bounded state and parameter spaces, it suffices for the activation function and its derivatives to be bounded over the compact range of pre-activation values generated by the network. Thus, the global boundedness assumption of the activation function serves primarily to avoid carrying additional constants. Further, we introduced $s_0$ such that the node features satisfy $\norm*{x^{(k)}}^2\leq s_0nd_k$ for $k=0,\ldots,K.$

In the special case of a graph consisting of a single node, a feedforward neural network is a special case of an MPNN. Hence, the necessity of parameter restrictions is analogous to the feedforward setting. In particular, \cite{Teichmann} shows that such restrictions are unavoidable for global Lipschitz estimates. This also becomes apparent in Example~\ref{ex:lower-bounds} and further highlights that global Lipschitz bounds generally require a restricted parameter space.

A key driver of all subsequent bounds is the choice of the aggregation function. Both $\operatorname{sum}$ and $\operatorname{mean}$ aggregation are linear in the node features and can therefore be represented as multiplication by a matrix whose operator norm can be bounded.

\begin{proposition}[Operator norm of the aggregation function]\label{prop:operatornorm-aggregate}
Let $A^{(k)}_{\operatorname{mean}}(E)$ and $A^{(k)}_{\sum}(E)$ denote the linear representations of the aggregation functions in layer $k$ when choosing $\operatorname{mean}$ or $\operatorname{sum}$, respectively. We denote the maximum in-degree of the graph by $\Delta^+$ and the minimal positive in-degree by $\delta^+$. Similarly, we define the maximum out-degree $\Delta^-$ and the minimal positive out-degree $\delta^-$. For $E=0$, both aggregation operators have operator norm one. Otherwise, let $e_{\min}$ and $e_{\max}$ denote the minimal and maximal positive edge weights. Then the operator norms are independent of the node-feature dimension and satisfy
\begin{align*}
1+e^2_{\min}\max\left\{\frac{\Delta^-}{{\Delta^+}^2}, \frac{1}{\delta^+}, \frac{\Delta^+}{{\Delta^-}^2}, \frac{1}{\delta^-}\right\}\leq\norm*{A_{\operatorname{mean}}^{(k)}(E)}^2 \leq 1 + e^2_{\max}\left(\frac{\Delta^+}{\delta^-} + \frac{\Delta^-}{\delta^+}\right),   
\end{align*} 
and
\begin{align*}
1+e_{\min}^2 \max\{\Delta^+, \Delta^-\} \leq \norm*{{A}_{\sum}^{(k)}(E)}^2 \leq 1 + 2e_{\max}^2 \Delta^+ \Delta^-.
\end{align*}
\end{proposition}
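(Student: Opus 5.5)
We first write the aggregation map as a linear operator. Because the same matrix acts on every feature coordinate, the map $x^{(k-1)}\mapsto [x_i,\bigoplus_{\mathcal N^+(i)} e_{ji}x_j,\bigoplus_{\mathcal N^-(i)} e_{ij}x_j]_{i}$ takes the form $A\otimes I_{d_{k-1}}$. Here $A\in\R^{3n\times n}$ stacks three blocks: $I_n$, $M^+$ and $M^-$. For sum aggregation $M^+=E^\top$ and $M^-=E$. For mean aggregation $M^+=D_+^{-1}E^\top$ and $M^-=D_-^{-1}E$, where $D_\pm$ are the diagonal degree matrices, with the convention that rows of isolated nodes are zero. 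Since $\|A\otimes I\|=\|A\|$, the norm is independent of $d_{k-1}$. The stacked structure gives $A^\top A=I+M^{+\top}M^++M^{-\top}M^-$, so
\begin{align*}
\|A\|^2=\lambda_{\max}\bigl(I+M^{+\top}M^++M^{-\top}M^-\bigr)=1+\sup_{\|x\|=1}\bigl(\|M^+x\|^2+\|M^-x\|^2\bigr).
\end{align*}
If $E=0$, this gives norm one at once. All remaining bounds are about this supremum.

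For the upper bounds we bound each term separately, using $\|M\|^2\le\|M\|_1\|M\|_\infty$, the product of the maximal column sum and the maximal row sum of absolute entries. For sum aggregation, $E^\top$ has row sums at most $e_{\max}\Delta^+$ and column sums at most $e_{\max}\Delta^-$. Hence $\|E^\top\|^2\le e_{\max}^2\Delta^+\Delta^-$, and the same holds for $E$, which gives $1+2e_{\max}^2\Delta^+\Delta^-$. For mean aggregation, the rows of $D_+^{-1}E^\top$ have at most $\Delta^+$ entries, each of size at most $e_{\max}/\deg^+(i)$, so every row sum is at most $e_{\max}$. Column $j$ has at most $\deg^-(j)\le\Delta^-$ nonzero entries, each at most $e_{\max}/\delta^+$, so every column sum is at most $e_{\max}\Delta^-/\delta^+$. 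This yields $\|M^+\|^2\le e_{\max}^2\Delta^-/\delta^+$, and symmetrically $\|M^-\|^2\le e_{\max}^2\Delta^+/\delta^-$. Adding the two gives the stated upper bound.

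For the lower bounds we choose test vectors and drop one of the two nonnegative terms.
\begin{itemize}
\item \emph{Sum, $\Delta^+$ term.} Take $x=e_j$ with $j$ a node of out-degree $\Delta^-$. Then $\|E^\top e_j\|^2$ equals the squared norm of row $j$ of $E$, which is at least $e_{\min}^2\Delta^-$. Symmetrically, $\|Ee_j\|^2\ge e_{\min}^2\Delta^+$ when $j$ has in-degree $\Delta^+$. Together these give $1+e_{\min}^2\max\{\Delta^+,\Delta^-\}$.
\item \emph{Mean, $1/\delta^+$ term.} Take $i$ with $\deg^+(i)=\delta^+$ and let $x$ be the normalized indicator of $\mathcal N^+(i)$. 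Then $(M^+x)_i\ge e_{\min}\delta^+/(\delta^+\sqrt{\delta^+})=e_{\min}/\sqrt{\delta^+}$, so $\|M^+x\|^2\ge e_{\min}^2/\delta^+$.
\item \emph{Mean, $\Delta^-/{\Delta^+}^2$ term.} Take $x=e_j$ with $\deg^-(j)=\Delta^-$. Each of the $\Delta^-$ nodes $i$ that receive from $j$ gets $(M^+e_j)_i=e_{ji}/\deg^+(i)\ge e_{\min}/\Delta^+$. Hence $\|M^+e_j\|^2\ge e_{\min}^2\Delta^-/{\Delta^+}^2$.
\item \emph{Mean, remaining two terms.} The same constructions applied to $M^-$ give $e_{\min}^2/\delta^-$ and $e_{\min}^2\Delta^+/{\Delta^-}^2$.
\end{itemize}
Taking the maximum over the four choices gives the stated lower bound.

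The main obstacle is bookkeeping rather than analysis. We must fix the in/out conventions consistently: $\mathcal N^+(i)$ uses $e_{ji}$, and "in-degree" means $|\mathcal N^+(i)|$. We must also make sure that column counts of $M^\pm$ correspond to the opposite degree, which is why $\Delta^-$ pairs with $\delta^+$ in the mean upper bound. Finally, nodes with zero in- or out-degree must be handled by setting their rows to zero, which is why $\delta^\pm$ are defined as minimal \emph{positive} degrees.
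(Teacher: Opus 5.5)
Your proposal is correct and follows essentially the same route as the paper. Both arguments write the operator, up to a row permutation, as $(I_n;\,M^+;\,M^-)\otimes I_{d_{k-1}}$ and bound it above by $1+\|M^+\|^2+\|M^-\|^2$, using the row-sum/column-sum product for each block. For the lower bounds both use $\|A\|^2\ge 1+\|M^\pm\|^2$ and bound $\|M^\pm\|$ from below by a single row or column norm, which is exactly what your test vectors $e_j$ and normalized neighbourhood indicators compute. Your first bullet is labelled as the $\Delta^+$ term but actually produces the $\Delta^-$ bound first; both bounds are covered, so this is only cosmetic.
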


The aggregation operator's norm depends only indirectly on the number of nodes, through the degree quantities and edge weights of the underlying graph. Sum aggregation leads to a stronger dependence on maximal degrees, whereas mean aggregation additionally depends on minimal positive degrees. Hence, in terms of these norm estimates, mean aggregation is especially advantageous for graphs with homogeneous node degrees. As shown in Proposition~\ref{prop:lipschitz-aggregation-map} in Appendix~\ref{app:aggregation-bounds}, mean aggregation also leads to smaller Lipschitz bounds with respect to perturbations of the weighted adjacency matrix $E$. However, as we observe in Section~\ref{sec:numerical-results}, these theoretical advantages do not directly translate into uniformly better empirical performance. This may be due to the lower expressive power of mean aggregation compared with sum aggregation, as discussed in \cite{WL-undirected2}. 

Other common choices for the aggregation function, such as $\min$ and $\max$, are less straightforward to compare for two reasons. First, they cannot be represented as multiplication by a single fixed matrix depending only on $E$, which leads to longer proofs and upper bounds of a different form. Second, they are only almost everywhere differentiable and thus require additional technical formalism. For these reasons, we omit them and henceforth assume that all layers use the same matrix-representable aggregation function. Although the corresponding matrices $A^{(k)}(E)$ act on feature spaces of different dimensions, their operator norms coincide. We therefore write $\norm*{A(E)}$ for this common graph-dependent aggregation norm.
For the asymptotic estimates in this subsection, a common upper bound on $\norm*{A(E)}$ is held fixed. For sum and mean aggregation, such an aggregation bound follows, for example, from uniformly bounded maximal edge weights and maximal in- and out-degrees by Proposition~\ref{prop:operatornorm-aggregate}.

The next theorem provides Lipschitz bounds for a $K$-layer MPNN encoder and its parameter derivative.

\begin{theorem}[Lipschitz bounds for the MPNN up to layer $K$]\label{theo:explicit-bounds}
Let Assumption~\ref{assum:bounds} hold, and suppose that all layers use the same matrix-representable aggregation function. Fix a state $s=(X,E)\in\mathcal S$. For $K\in\N$, set
\begin{align*}
d := \max\{d_0,\ldots,d_K\}, \qquad \beta_{\leq K}:=\max\{\beta_1,\ldots,\beta_K\}, \qquad q:=s_1\norm*{A(E)}^2\beta_{\leq K}^2,
\end{align*}
and
\begin{align*}
\alpha := s_1 n \left(s_0\norm*{A(E)}^2 d +1\right).
\end{align*}
Then, for all admissible parameter vectors $\theta^K,\bar\theta^K$, there exists a constant
$B_{\leq K}$ such that
\[
\left\|
f^K(\theta^K,X,E)-f^K(\bar\theta^K,X,E)
\right\|
\leq
B_{\leq K}\left\|\theta^K-\bar\theta^K\right\|,
\qquad
B_{\leq K}^2\leq \alpha S_K(q),
\]
where
\[
S_K(q):=
\begin{cases}
\dfrac{q^K-1}{q-1}, & q\neq 1,\\[1ex]
K, & q=1.
\end{cases}
\]

Moreover, the derivative of the $K$-layer MPNN with respect to the parameters is Lipschitz continuous, i.e., there exists a constant $L_{\leq K}$ such that
\[
\left\|
D_{\theta^K}f^K(\theta^K,X,E)
-
D_{\theta^K}f^K(\bar\theta^K,X,E)
\right\|
\leq
L_{\leq K}\left\|\theta^K-\bar\theta^K\right\|,
\qquad
L_{\leq K}^2
=
\mathcal O\!\left(n^2d^2G_K(q)\right),
\]
where
\[
G_K(q):=
\begin{cases}
q^{2K}, & q>3,\\
K3^{2K}, & q=3,\\
3^Kq^K, & \frac13<q<3,\\
K, & q=\frac13,\\
1, & q<\frac13.
\end{cases}
\]
The $\mathcal{O}$-notation is with respect to $n$, $d$ and $K$, with $q$ fixed.
\end{theorem}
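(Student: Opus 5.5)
We will argue by induction over the layers. The key tool is the matrix representation of the aggregation from Proposition~\ref{prop:operatornorm-aggregate}. Write the $k$-th layer in stacked form as
\[
x^{(k)}=\sigma\odot\bigl((I_n\otimes W^{(k)})A^{(k)}(E)x^{(k-1)}+\mathbf 1_n\otimes b^{(k)}\bigr)=:\Phi_k(\theta_k,x^{(k-1)}),
\]
where $\theta_k=(W^{(k)},b^{(k)})$. Under Assumption~\ref{assum:bounds}, $\norm{I_n\otimes W^{(k)}}=\norm{W^{(k)}}\le\beta_k$. We will derive two elementary one-layer estimates.

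First, the Jacobian of $\Phi_k$ with respect to the input satisfies $\norm{D_x\Phi_k}^2\le s_1\norm{A(E)}^2\beta_k^2\le q$.

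Second, the Jacobian with respect to the parameters, applied to a direction $(\Delta W,\Delta b)$, gives $\mathrm{diag}(\sigma')\bigl((I\otimes\Delta W)A x^{(k-1)}+\mathbf 1\otimes\Delta b\bigr)$. Using Frobenius bounds and $\norm{x^{(k-1)}}^2\le s_0 n d_{k-1}$, its squared norm is at most $s_1(\norm{A}^2 s_0 n d+n)\norm{(\Delta W,\Delta b)}^2=\alpha\norm{\Delta\theta_k}^2$. To see this, note that $\norm{(I\otimes\Delta W)y}^2=\sum_i\norm{\Delta W y_i}^2$, and that $\sum_i\norm{\Delta b}^2=n\norm{\Delta b}^2$.

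\textbf{First claim (the bound on $B_{\le K}$).} Apply the chain rule to $f^K=\Phi_K\circ\cdots\circ\Phi_1$. The block of $D_{\theta^K}f^K$ belonging to $\theta_k$ equals $D_x\Phi_K\cdots D_x\Phi_{k+1}\,D_{\theta_k}\Phi_k$. For a direction $\Delta\theta^K$ we therefore obtain the bound $\sum_{k}q^{(K-k)/2}\sqrt\alpha\,\norm{\Delta\theta_k}$. Cauchy--Schwarz then gives
\[
\norm{Df^K}^2\le\alpha\sum_{k=1}^K q^{K-k}=\alpha S_K(q).
\]
The parameter set is a product of (convex) norm balls, so the segment between $\theta^K$ and $\bar\theta^K$ stays admissible, and the mean value inequality yields the Lipschitz bound with $B_{\le K}^2\le\alpha S_K(q)$.

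\textbf{Second claim (the bound on $L_{\le K}$).} This is the main obstacle: bounding the second derivative $D^2_{\theta^K}f^K$ uniformly on the admissible set. We will define $M_k$ as the Lipschitz constant of $D_{\theta^k}f^k$ and set up a recursion. Differentiate the identity
\[
D_{\theta^k}f^k=\bigl[\,D_x\Phi_k\cdot D_{\theta^{k-1}}f^{k-1}\;\big|\;D_{\theta_k}\Phi_k\,\bigr]
\]
once more. Each factor contains $\sigma'$ evaluated at a pre-activation that depends on the parameters, so its variation produces a factor $\sqrt{s_2}$ times the variation of the pre-activation. The variation of the pre-activation is in turn controlled by $\beta_k\norm{A}B_{\le k-1}+\sqrt\alpha$-type terms. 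We also have the cross term $\partial_{W^{(k)}}D_x\Phi_k$, whose size is $\sqrt{s_1}\norm{A}$ plus an $s_2$-term times $\norm{x^{(k-1)}}$. Collecting these pieces gives a recursion of the form
\[
M_k\le \sqrt q\,M_{k-1}+c\,\bigl(\sqrt{q}\,B_{\le k-1}+\sqrt{\alpha}\bigr)\bigl(B_{\le k-1}+\sqrt{\alpha}\bigr),
\]
where $c$ depends only on $s_1,s_2,\norm{A}$ and $\beta$.

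The forcing term is of order $\alpha S_{k-1}(q)$, i.e.\ of order $nd\max(1,q)^{k}$ up to constants, since $\alpha=\mathcal O(nd)$. Unrolling the recursion gives
\[
M_K\lesssim nd\sum_{k\le K}q^{(K-k)/2}\,S_k(q).
\]
Squaring this, and using Cauchy--Schwarz with a geometric weighting, produces the case distinction encoded in $G_K(q)$. The thresholds at $q=3$ and $q=1/3$ come from comparing ratios of the form $q^{\pm1}$ against the weighting factor $3$ that the chosen Cauchy--Schwarz split introduces. Taking the dominant geometric term in each regime yields $L_{\le K}^2=\mathcal O(n^2d^2G_K(q))$.

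The delicate part is tracking which geometric series dominates in each regime. We would do this by carrying the exact sum $\sum_k 3^{k}q^{K-k}$-type expressions and evaluating them case by case.
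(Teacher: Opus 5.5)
Your proposal is correct, and its skeleton (one-layer Jacobian bounds, induction over layers, unrolling a linear recursion) is the paper's. The first claim is the same computation as the paper's recursion $B_{\le k}^2={B_\varphi^{(k)}}^2+{B_x^{(k)}}^2B_{\le k-1}^2$, just unrolled through the chain rule and Cauchy--Schwarz. One precision point: to get exactly $\alpha$ rather than $2\alpha$, bound the $\Delta W$ and $\Delta b$ parts jointly, $\sum_i\|[\Delta W\ \Delta b](v_i^\top,1)^\top\|^2\le\|[\Delta W\ \Delta b]\|_F^2(\|v\|^2+n)$, as the paper does, instead of treating them separately.

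For the second claim you genuinely depart from the paper. The paper works with squared constants. In Proposition~\ref{prop:lipschitz-GNN} it splits the difference of $D_{\theta^k}f^k$ into three terms and uses $(a+b+c)^2\le 3(a^2+b^2+c^2)$, which gives $L_{\le k}^2\le 3{B_x^{(k)}}^2L_{\le k-1}^2+w_k\le 3q\,L_{\le k-1}^2+w_k$. The per-layer factor $3$ compounds to $(3q)^{K-j}$, and this is the only source of the thresholds $q=3$ and $q=\tfrac13$ in $G_K(q)$.

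Your unsquared recursion $M_k\le\sqrt q\,M_{k-1}+F_k$ avoids that loss, and your forcing term is right. The variation of $\operatorname{Diag}(\sigma'(z^{(k)}))$ is at most $\sqrt{s_2}\|\Delta z^{(k)}\|$, and $\|\Delta z^{(k)}\|\le(\sqrt{\alpha/s_1}+\beta_k\|A(E)\|B_{\le k-1})\|\Delta\theta^k\|$, so $F_k=\mathcal O(nd\max\{1,S_{k-1}(q)\})$. Squaring the unrolled sum directly gives:
\begin{itemize}
\item $M_K^2=\mathcal O(n^2d^2q^{2K})$ for $q>1$;
\item $\mathcal O(n^2d^2K^4)$ for $q=1$;
\item $\mathcal O(n^2d^2)$ for $q<1$.
\end{itemize}
Each of these is dominated by $G_K(q)$, so the theorem follows immediately. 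Your account of the last step is therefore off: no delicate case tracking is needed. The thresholds at $3$ and $\tfrac13$ only reappear if you deliberately weight Cauchy--Schwarz by $3^{-(K-k)}$, which reproduces the paper's $\sum_k(3q)^{K-k}F_k^2$ at the cost of sharpness. (Minor slip: at $q=1$ the forcing term is of order $nd\,k$, not $nd\max(1,q)^k$; this is harmless since $K^4=\mathcal O(3^K)$.)

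What each route buys: the paper's gives fully explicit layer-wise constants (Lemma~\ref{lemm:lipschitz-single}), which it reuses in Proposition~\ref{prop:lipschitz-score}. Yours gives a strictly better depth dependence for $\tfrac13\le q\le 3$, for example $q^{2K}$ instead of $3^Kq^K$ when $1<q<3$. Combined with Example~\ref{ex:lower-bounds}, whose lower bound $s_2q^{2(K-1)}$ never actually uses $\beta>\sqrt3$, this shows $q^{2K}$ is the correct depth rate for every fixed $q>1$.
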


Using uniform bounds up to layer $K$, the depth dependence of the squared Lipschitz bound for the MPNN is governed by the geometric sum $S_K(q)$ over $q=s_1\norm*{A(E)}^2\beta_{\leq K}^2.$
For fixed $q>1$, the bound grows geometrically with base $q$. At $q=1$, the sum grows linearly in $K$, whereas for $q<1$ it remains bounded uniformly in $K$. These regimes reflect how the layer-wise estimates control the propagation of perturbations originating in earlier layers.

This further motivates the analysis of the aggregation norm in Proposition~\ref{prop:operatornorm-aggregate}. The norm $\norm*{A(E)}$ is independent of the node-feature dimension, but depends on structural properties of the graph through its degrees and edge weights. It enters both the depth-dependent factor $q$ and the prefactor $\alpha=s_1n\left(s_0\norm*{A(E)}^2d+1\right),$ while further powers of the aggregation norm are absorbed into the constants in the $\mathcal O$-notation. Consequently, the graph structure can affect not only the magnitude of the bound but also its exponential dependence on the depth. 

Similarly, the squared Lipschitz bound for the parameter derivative is governed by $G_K(q)$. For fixed $q>3$, its exponential base is $q^2$. With the activation bounds, weight bounds, and aggregation norm held fixed, the corresponding prefactor is of order $n^2d^2$.
Although the estimates in Theorem~\ref{theo:explicit-bounds} are not sharp, the following simple example shows that both exponential depth rates are attained in the worst case for fixed $q>3$.

\begin{example}[Lower bounds]
\label{ex:lower-bounds}
Consider a graph consisting of a single node without any edges and with one-dimensional node features. Let $K\geq 2$, set $x^{(0)}=0$ and choose $\sigma=\tanh$. Fix $\beta>\sqrt{3}$ and let the weights and biases in every MPNN layer be given by
\begin{align*}
W^{(k)} &:= \begin{pmatrix}\beta & 0 & 0\end{pmatrix}
\in\R^{1\times 3}, \qquad k=1,\ldots,K,\\
b^{(k)} &:=
\begin{cases}
0, & k=1,\ldots,K-1,\\
x_0, & k=K,
\end{cases}
\end{align*}
where $x_0:=\operatorname{arctanh}(1/\sqrt{3})$. Then $q=\beta^2>3$ and the Jacobian of the MPNN satisfies
\begin{align*}
\norm*{D_{\theta^K}f^K(\theta^K,x^{(0)},E)}^2
=\frac{4}{9}S_K(q)
=\frac{4}{9}\frac{q^K-1}{q-1}.
\end{align*}
Moreover its Lipschitz constant L can be bounded by
\begin{align*}
s_2q^{2(K-1)} \leq L^2 = \mathcal{O}(q^{2K}),
\end{align*}
where the upper bound follows from Theorem~\ref{theo:explicit-bounds}.
\end{example}
Letting $\beta$ tend to infinity shows that restrictions on the layer weights are necessary in general to obtain global bounds on the Lipschitz constants. The detailed analysis of this example is provided in Appendix~\ref{app:mpnn-parameter-bounds}.

We next consider Lipschitz continuity with respect to the graph input.

\begin{theorem}[Input Lipschitz bound for the MPNN]\label{theo:input-lipschitz-GNN}
Let $\mathcal D\subset\mathcal S$ be a subset of the state space such that, for every $k\in\{1,\ldots,K\}$, there exist constants $a,c\geq 0$ with
\begin{align*}
\norm*{A^{(k)}(E)} \leq a, \qquad 
\norm*{A^{(k)}(E)-A^{(k)}(\bar E)} \leq c\norm*{E-\bar E}
\end{align*}
for all $s=(X,E),\bar s=(\bar X,\bar E)\in\mathcal D$. Let $f_{\max}$ be a uniform entrywise bound on the initial node features over $\mathcal{D}$ and fix parameters satisfying Assumption~\ref{assum:bounds}. Further, define
\begin{align*}
\left(q_{\leq K}^{\operatorname{in}}\right)^2 := s_1a^2 \beta_{\leq K}^2, 
\quad \left(\alpha_{\leq K}^{\operatorname{in}}\right)^2 &:= s_0s_1 \beta_{\leq K}^2 c^2 nd. 
\end{align*}
Then, for all $s=(X,E),\bar s=(\bar X,\bar E)\in\mathcal D$, the $K$-layer MPNN satisfies
\begin{align*}
\norm*{f^K(\theta^K,X,E)-f^K(\theta^K,\bar X, \bar E)} 
\leq \left(q_{\leq K}^{\operatorname{in}}\right)^K \norm*{X-\bar X}_F 
+ \alpha_{\leq K}^{\operatorname{in}} S_K(q_{\leq K}^{\operatorname{in}}) \norm*{E-\bar E}.
\end{align*}
\end{theorem}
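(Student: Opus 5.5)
We proceed by induction over the layers, writing the $k$-th layer in matrix form as in Remark~\ref{rem:MPNN-definition}:
\[
x^{(k)}=\sigma\odot\bigl((I_n\otimes W^{(k)})A^{(k)}(E)x^{(k-1)}+\mathbf 1_n\otimes b^{(k)}\bigr),
\]
where $A^{(k)}(E)$ maps $x^{(k-1)}\in\R^{nd_{k-1}}$ to the stacked concatenations $[x_i,\bigoplus e_{ji}x_j,\bigoplus e_{ij}x_j]_{i}\in\R^{3nd_{k-1}}$.

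Since $\|I_n\otimes W^{(k)}\|=\|W^{(k)}\|\le\beta_k$ and $\sigma$ is Lipschitz with constant $\sqrt{s_1}$ componentwise, the biases cancel in differences and we obtain
\[
\|x^{(k)}-\bar x^{(k)}\|\le \sqrt{s_1}\,\beta_{\le K}\,\|A^{(k)}(E)x^{(k-1)}-A^{(k)}(\bar E)\bar x^{(k-1)}\|.
\]
Here $\bar x^{(k)}$ denotes the embeddings computed from $(\bar X,\bar E)$ with the same parameters.

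The key step is to split the right-hand side by adding and subtracting $A^{(k)}(E)\bar x^{(k-1)}$:
\[
\|A^{(k)}(E)(x^{(k-1)}-\bar x^{(k-1)})\|+\|(A^{(k)}(E)-A^{(k)}(\bar E))\bar x^{(k-1)}\|\le a\,\Delta_{k-1}+c\,\|E-\bar E\|\,\|\bar x^{(k-1)}\|,
\]
where $\Delta_k:=\|x^{(k)}-\bar x^{(k)}\|$. The norm of the embeddings is bounded using the observation after Assumption~\ref{assum:bounds}: $\|\bar x^{(k-1)}\|^2\le s_0 n d_{k-1}\le s_0nd$. This bound is uniform over $\mathcal D$, because $f_{\max}$ is uniform there for $k-1=0$, and the bounds for $k-1\ge1$ come from $\|\sigma\|_\infty$. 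Writing $q:=q^{\mathrm{in}}_{\le K}=\sqrt{s_1}a\beta_{\le K}$ and $\alpha:=\alpha^{\mathrm{in}}_{\le K}=\sqrt{s_0s_1nd}\,\beta_{\le K}c$, this yields the linear recursion
\[
\Delta_k\le q\,\Delta_{k-1}+\alpha\,\|E-\bar E\|,\qquad \Delta_0=\|X-\bar X\|_F.
\]

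Unrolling the recursion gives
\[
\Delta_K\le q^K\|X-\bar X\|_F+\alpha\|E-\bar E\|\sum_{j=0}^{K-1}q^j.
\]
It then remains to compare $\sum_{j=0}^{K-1}q^j$ with $S_K(q)$ as defined in Theorem~\ref{theo:explicit-bounds}. For $q\neq1$ this sum equals $(q^K-1)/(q-1)$, and for $q=1$ it equals $K$, so it is exactly $S_K(q)$. This establishes the claim.

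The main obstacle is bookkeeping rather than depth. One must check that the lifted operator norm of $I_n\otimes W^{(k)}$ equals $\|W^{(k)}\|$, and that the embedding-norm bound $\|x^{(k)}\|^2\le s_0nd_k$ holds at every layer, including layer $0$ via $f_{\max}$. One must also confirm that the hypothesis $\|A^{(k)}(E)-A^{(k)}(\bar E)\|\le c\|E-\bar E\|$ is applied with the matrix representation acting on the correct feature dimension. Since the operator norms do not depend on the feature dimension, the same $a$ and $c$ serve for all layers.
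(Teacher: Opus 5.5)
Your proposal is correct and follows essentially the same route as the paper: you add and subtract $A^{(k)}(E)\bar x^{(k-1)}$, bound $\|\bar x^{(k-1)}\|^2\le s_0nd$, and unroll the resulting linear recursion into $S_K(q^{\operatorname{in}}_{\leq K})$. The only difference is cosmetic: the paper first carries two layer-dependent constants $B^{\operatorname{in}}_{X,k}$ and $B^{\operatorname{in}}_{E,k}$ (with $\beta_j$ and $d_{j-1}$) and uniformizes afterwards, whereas you uniformize immediately and track a single quantity $\Delta_k$.
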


For sum aggregation, the map $E\mapsto A^{(k)}(E)$ is globally Lipschitz continuous with $c=\sqrt{2}$ by Proposition~\ref{prop:lipschitz-aggregation-map} in Appendix~\ref{app:aggregation-bounds}. If the weighted adjacency matrices are uniformly bounded on $\mathcal D$, Proposition~\ref{prop:operatornorm-aggregate} further yields a uniform bound $a$ on the aggregation norm.  For mean aggregation, the corresponding bounds hold on domains with uniformly bounded weighted adjacency matrices and fixed edge support.

The previous theorem controls the sensitivity of the MPNN encoder with respect to perturbations of the graph input. The following theorem transfers this estimate to the policy. Since state-dependent action masks can change discontinuously when the admissible action sets changes, we state the result on subsets of the state space on which these sets are fixed.

\begin{theorem}[Input Lipschitz bound for the policy]\label{theo:input-lipschitz-policy}
Let the assumptions of Theorem~\ref{theo:input-lipschitz-GNN} hold on $\mathcal D\subset\mathcal S$, and consider the policy with summation readout and a $K$-layer MPNN encoder. Suppose further that the admissible node-level action sets are fixed on $\mathcal D$. Then, for every action $a=(a_{\mathrm{glob}},a_1,\ldots,a_n)$ admissible on $\mathcal D$, the policy satisfies
\begin{align*}
\left|\pi_\theta(a\mid s)-\pi_\theta(a\mid \bar s)\right| \leq \frac{\sqrt n}{2} \left( \beta_G+\eta(a_{\mathrm{glob}})\beta_N \right) \left( \left(q_{\leq K}^{\operatorname{in}}\right)^K\norm*{X-\bar X}_F
+ \alpha_{\leq K}^{\operatorname{in}}S_K(q_{\leq K}^{\operatorname{in}})\norm*{E-\bar E} \right)
\end{align*}
for all $s=(X,E),\bar s=(\bar X,\bar E)\in\mathcal D$.
\end{theorem}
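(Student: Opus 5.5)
The plan is to write $\pi_\theta(a\mid s)$ as a smooth function of the encoder output $x^{(K)}=f^K(\theta^K,X,E)$, bound the sensitivity of that function, and then invoke Theorem~\ref{theo:input-lipschitz-GNN}. Because the admissible node-level action sets are fixed on $\mathcal D$, the masks $\operatorname{mask}_{s,i}$ do not depend on $s\in\mathcal D$. Hence for fixed $\theta$ and fixed $a$ there is a single map $\Phi_a:\R^{nd_K}\to[0,1]$ with $\pi_\theta(a\mid s)=\Phi_a(x^{(K)}(s))$ for all $s\in\mathcal D$. It therefore suffices to show
\[
|\Phi_a(x)-\Phi_a(\bar x)|\leq \tfrac{\sqrt n}{2}\bigl(\beta_G+\eta(a_{\mathrm{glob}})\beta_N\bigr)\norm*{x-\bar x}
\]
and then multiply by the bound from Theorem~\ref{theo:input-lipschitz-GNN}.

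The key elementary fact concerns a single softmax coordinate $p_j(z)=\operatorname{softmax}(z)_j$. Its gradient is $p_j(e_j-p)$, so
\[
\norm*{\nabla p_j}^2=p_j^2\Bigl((1-p_j)^2+\sum_{l\neq j}p_l^2\Bigr)\leq 2p_j^2(1-p_j)^2\leq \tfrac18 .
\]
This gives $\norm*{\nabla p_j}\le \tfrac{1}{2\sqrt2}\le\tfrac12$. Masking only restricts the softmax to a fixed subset of coordinates, so the same bound holds for masked coordinates; for an inadmissible coordinate the probability is identically zero.

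For the global factor, the summation readout $x\mapsto\sum_i x_i$ has operator norm $\sqrt n$, by Cauchy--Schwarz on the $n$ blocks. Composing with $W^G$ gives
\[
\norm*{\nabla_x \pi^{\mathrm{glob}}(a_{\mathrm{glob}})}\leq \tfrac12\beta_G\sqrt n .
\]
If $\eta(a_{\mathrm{glob}})=0$, we are done. If $\eta(a_{\mathrm{glob}})=1$, write $\Phi_a=\pi^{\mathrm{glob}}\prod_i\pi^i$ and apply the product rule. Every factor lies in $[0,1]$, and $\pi^i$ depends only on the block $x_i$. So
\[
\norm*{\nabla\Phi_a}\leq \norm*{\nabla\pi^{\mathrm{glob}}}+\norm*{\bigl(\nabla_{x_i}\pi^i\bigr)_{i}} .
\]
Each block in the second term satisfies $\norm*{\nabla_{x_i}\pi^i}\le\tfrac12\beta_N$, and the blocks are orthogonal. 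Hence the second term is at most $\tfrac12\beta_N\sqrt n$.

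Adding the two contributions and applying the mean value theorem along the segment from $\bar x$ to $x$ gives the claimed Lipschitz constant for $\Phi_a$. Inserting $x=f^K(\theta^K,X,E)$ and $\bar x=f^K(\theta^K,\bar X,\bar E)$ into Theorem~\ref{theo:input-lipschitz-GNN} then yields the statement.

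The main obstacle I expect is the node-level product term. A naive bound sums $n$ node contributions and produces a factor $n$ rather than $\sqrt n$. The fix is the orthogonality of the block gradients combined with the fact that all other factors in the product are bounded by $1$. The second point needing care is checking that the fixed-mask hypothesis genuinely makes $\Phi_a$ independent of $s$, since the encoder bound applies only to $x^{(K)}$.
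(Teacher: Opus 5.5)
Your proposal is correct and has the same structure as the paper's proof: bound the sensitivity of the fixed-mask head map in $x^{(K)}$ by $\tfrac{\sqrt n}{2}\bigl(\beta_G+\eta(a_{\mathrm{glob}})\beta_N\bigr)$, then compose with Theorem~\ref{theo:input-lipschitz-GNN}. The paper uses finite differences rather than gradients: it cites the global $\tfrac12$-Lipschitz property of softmax, the telescoping bound $|\prod_i u_i-\prod_i v_i|\le\sum_i|u_i-v_i|$ for $u_i,v_i\in[0,1]$, and $\sum_i\norm*{P_i(x-\bar x)}\le\sqrt n\,\norm*{x-\bar x}$. This shows that simply summing the $n$ node contributions already gives $\sqrt n$ via Cauchy--Schwarz, provided each term is measured on its own block, so the obstacle you anticipated does not actually arise.
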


We now transfer the parameter-side MPNN bounds to the logarithmic policy and the score function.

\begin{theorem}[Explicit bounds for the score function]\label{theo:score-explicit}
Let the assumptions of Theorem~\ref{theo:explicit-bounds} hold, and consider the logarithmic policy with summation readout and a $K$-layer MPNN encoder. Then there exists a constant $B_g$ such that, for all admissible parameter vectors
$\theta,\bar\theta$,
\[ \left| \log\pi_\theta(a\mid s) - \log\pi_{\bar\theta}(a\mid s) \right|
\leq B_g\|\theta-\bar\theta\|, \]
where
\[ B_g^2= \begin{cases}
\mathcal O\!\left(n^3 d\,S_K(q)\right),
    & \eta(a_{\mathrm{glob}})=1,\\[1mm]
\mathcal O\!\left(n^2 d\,S_K(q)\right),
    & \eta(a_{\mathrm{glob}})=0.
\end{cases}
\]

Moreover, there exists a constant $L_g$ such that the score function is Lipschitz continuous,
\[\left\|\nabla_\theta\log\pi_\theta(a\mid s)-\nabla_\theta\log\pi_{\bar\theta}(a\mid s)\right\|
\leq L_g\|\theta-\bar\theta\|, \]
where
\[ L_g^2=
\begin{cases}
\mathcal O\!\left(n^4 d^2 G_K(q)\right),& \eta(a_{\mathrm{glob}})=1,\\[1mm]
\mathcal O\!\left(n^3 d^2 G_K(q)+n^4 d^2 S_K(q)^2\right), & \eta(a_{\mathrm{glob}})=0.
\end{cases}
\]

The $\mathcal{O}$-notation is with respect to $n$, $d$ and $K$, with $q$ fixed.
\end{theorem}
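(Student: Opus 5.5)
The plan is to decompose $\log\pi_\theta(a\mid s)$ via the factorization \eqref{eq:logpolicy} into the global term $\log\pi^{\mathrm{glob}}_\theta(a_{\mathrm{glob}}\mid s)$ and, when $\eta(a_{\mathrm{glob}})=1$, the sum $\sum_{i=1}^n\log\pi^i_\theta(a_i\mid s)$. Each term is a composition $\theta\mapsto (x^{(K)},W,b)\mapsto z\mapsto \log\operatorname{softmax}(z)_a$. Here $z=W^G\sum_i x_i^{(K)}+b^G$ for the global head, and $z=\operatorname{mask}_{s,i}(W^Nx_i^{(K)}+b^N)$ for the local head. The masking only deletes coordinates, so it does not increase Lipschitz constants. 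The outer map $z\mapsto\log\operatorname{softmax}(z)_a$ has gradient $e_a-\operatorname{softmax}(z)$, whose norm is at most $\sqrt2$. Its Hessian $\operatorname{diag}(p)-pp^\top$ has operator norm at most $1$. Hence it suffices to bound the logits $z$ and their parameter derivative $D_\theta z$ in value and Lipschitz constant, and then apply the chain rule together with the product rule for the derivative.

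\textbf{Step 1 (logit bounds).} For the global head, the summation readout is linear with $\|\sum_i x_i\|\le\sqrt n\|x\|$. The derivative of $z$ with respect to $\theta$ therefore splits into the head-parameter part (norm $\lesssim\|\operatorname{READ}^G(x)\|+1\le\sqrt n\,\sqrt{s_0nd}+1$) and the encoder part (norm $\le\beta_G\sqrt n\,B_{\le K}$). Using Theorem~\ref{theo:explicit-bounds}, $B_{\le K}^2\le\alpha S_K(q)=\mathcal O(nd\,S_K(q))$. The global contribution to $B_g^2$ is then $\mathcal O(n\cdot nd\,S_K(q))=\mathcal O(n^2dS_K(q))$, where the head-parameter term $n^2 d$ is absorbed since $S_K(q)\ge1$. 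This gives the case $\eta=0$.

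For each local head, the derivative of $W^Nx_i+b^N$ involves only the block $x_i$. Its encoder part is bounded by $\beta_N\|D_\theta x_i\|\le\beta_N B_{\le K}$, giving $\mathcal O(nd\,S_K(q))$ per node. Summing $n$ such terms by the triangle inequality gives a gradient norm of order $n\sqrt{ndS_K}$, i.e.\ $\mathcal O(n^3dS_K(q))$ in the squared bound. This gives the case $\eta=1$. The first claim then follows by the mean value theorem on the parameter set, which must be convex; I would assume this, since it holds for norm balls.

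\textbf{Step 2 (score Lipschitz).} Write $\nabla_\theta\log\pi=(D_\theta z)^\top(e_a-p(z))$. The difference at $\theta$ and $\bar\theta$ splits into two contributions.

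The first is $\|D_\theta z-D_{\bar\theta}z\|\cdot\sqrt2$. This is controlled by $L_{\le K}$ from Theorem~\ref{theo:explicit-bounds}, with $L^2_{\le K}=\mathcal O(n^2d^2G_K(q))$. The head weights multiply the encoder, which adds cross terms (the head-weight perturbation times $D_\theta x$) of size $B_{\le K}$. Readout summation contributes a factor $\sqrt n$ globally, giving $\mathcal O(n^3d^2G_K)$. The local heads contribute a factor $n$ from summing over nodes, giving $\mathcal O(n^4d^2G_K)$.

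The second is $\|D_\theta z\|^2$, which arises from the Lipschitz constant of $p(z(\theta))$. It is of order $B_g^2$ for one head. For the global head this is $(n\cdot nd S_K)^2$ up to constants, i.e.\ $\mathcal O(n^4d^2S_K(q)^2)$, which explains the extra term in the $\eta=0$ case.

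For $\eta=1$, I expect the corresponding quadratic terms to be dominated by $n^4d^2G_K(q)$. Indeed, $S_K(q)^2\lesssim G_K(q)$ in every regime: $q^{2K}$ versus $3^Kq^K\ge q^{2K}$ for $q<3$, and bounded by constants for $q<1$. The local quadratic terms are per-node $(ndS_K)$ summed over $n$ nodes, giving $n^2 d S_K$ with no worse power. I will check that the stated $\eta=1$ bound absorbs the global quadratic term using this comparison.

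\textbf{Main obstacle.} The main obstacle is the bookkeeping of powers of $n$. The triangle inequality over the $n$ local heads, the $\sqrt n$ from summation readout, and the factor $n$ already inside $\alpha$ and $L_{\le K}$ must combine to exactly the stated exponents. Each head term has to be tracked separately rather than through a crude uniform bound. I would first derive exact constant expressions per head, and only then take the asymptotic order in $n,d,K$ with $q$ fixed.
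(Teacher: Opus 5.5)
Your proposal is correct and takes essentially the same route as the paper. The paper proves separate boundedness and Lipschitz lemmas for the global and local heads, in the head parameters and in the embedding $x$, and then chains them with $B_{\le K}$ and $L_{\le K}$ from Theorem~\ref{theo:explicit-bounds}, using the same ingredients you use: softmax Lipschitz continuity, $\|A_{\text{read}}\|^2=n$, the triangle inequality over the $n$ local heads, $S_K(q)^2=\mathcal O(G_K(q))$, and Lemma~\ref{lemm:Schrankensatz} on a convex parameter set. Organizing this through the logits instead is only a bookkeeping difference, and your power counting in $n$, $d$ and $K$ matches the paper's in both cases of $\eta(a_{\mathrm{glob}})$, including the $n^4d^2S_K(q)^2$ term for $\eta(a_{\mathrm{glob}})=0$.
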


More precise constants for the global and shared local head are stated in Proposition~\ref{prop:lipschitz-score} in Appendix~\ref{app:score-function-bounds}.

\begin{remark}[Interpretation of the prefactors]\label{rem:theorem-score}
The distinction between the cases $\eta(a_{\mathrm{glob}})=1$ and $\eta(a_{\mathrm{glob}})=0$ indicates the different contributions of the global and local policy heads. When the local decisions are inactive, all local-head contributions vanish, yielding the improved bounds stated in Theorem~\ref{theo:score-explicit}.

The contribution of the global head can be further reduced by replacing the summation readout with a mean readout. In that case, the squared norm of the Jacobian of the readout function is $\tfrac{1}{n}$ instead of $n$, which yields
$$B_g^2 =\mathcal{O}(dS_K(q)) \quad \text{and} \quad L_g^2=\mathcal{O}(nd^2G_K(q))$$
for the global head alone. Alternative readouts such as $\min$ and $\max$ could also be used for the global task, but we do not analyze them here since they lead to the same technical difficulties as the corresponding aggregation functions.
\end{remark}

\subsection{Convergence of Projected REINFORCE}
\label{subsec:REINFORCE}

The parameter-side bounds from Subsection~\ref{subsec:Lipschitz} provide the regularity estimates needed to analyze policy-gradient updates for the MPNN-based policy introduced in Section~\ref{sec:gnn-architecture}. Such regularity assumptions are standard in convergence analyses for policy-gradient and actor-critic methods, building on the seminal work of \cite{Actor-CriticAlgorithms}.
For instance, \cite{Onthetheoryofpolicygradientmethods:optimalityapproximationanddistributionshift} assumes Lipschitz continuity of $\nabla \log \pi$ to establish agnostic convergence for natural policy gradient methods under function approximation. Similarly, \cite{Ageneralsamplecomplexityanalysisofvanillapolicygradient} proves convergence of policy-gradient methods under weaker assumptions than Lipschitz continuity, but points out that for multi-layer neural networks one may need to restrict the parameter space. Related regularity assumptions also appear in more recent actor-critic analyses: \cite{Convergenceofactor-criticmethodswithmulti-layerneuralnetworks} assumes Lipschitz continuity of the policy and the logarithmic policy, \cite{OntheConvergenceofContinuousSingle-timescaleActor-critic} assumes boundedness and Lipschitz continuity of $\nabla\log\pi$, and \cite{closingthegap} proves last-iterate convergence under stronger regularity assumptions.

In our setting, these properties are not imposed abstractly. Instead, once the parameters are restricted to a compact set, they follow from the explicit architecture-dependent estimates derived in Subsection~\ref{subsec:Lipschitz}. Since the corresponding constants enter convergence guarantees and rates, it is useful to understand how they scale with architectural design choices and structural properties of the underlying graph.

We now consider a projected version of the REINFORCE algorithm from \cite{REINFORCE}. To prove convergence, we build on existing results for random-horizon policy-gradient estimators and projected stochastic approximation. Throughout this subsection, we keep the policy parametrization from the previous sections, and fix an initial state $s_0\in\mathcal S$. We further assume that the state space is compact and again that the activation function $\sigma$ is twice differentiable with bounded derivatives. Since the number of nodes is fixed and the action sets $\mathcal A_{\mathrm{glob}}$ and $\mathcal A_{\mathrm{node}}$ are finite, all graph- and input-dependent quantities entering the constants in Subsection~\ref{subsec:Lipschitz} are then uniformly bounded over $\mathcal S$. Hence, the constants in Theorem~\ref{theo:score-explicit} can be chosen uniformly over all admissible state-action pairs. With a slight abuse of notation, we denote the corresponding uniform bounds on the score function and its parameter-side Lipschitz constant by $B_g$ and $L_g$.

Let $\rho_\theta$ denote the normalized discounted state-action occupancy measure induced by $\pi_\theta$ and the initial state $s_0$. Then the policy gradient theorem yields
\begin{align*}
\nabla J(\theta)=\frac{1}{1-\gamma}\E_{\rho_\theta}\left[\nabla_\theta\log\pi_\theta(A\mid S)\,Q_\theta(S,A)\right],
\end{align*}
where $(S,A)$ is distributed according to $\rho_\theta$.
We use this as the starting point of the convergence analysis and refer to \cite{policy-gradient-methods, Zhang} for further details.

The gradient estimator in Algorithm~\ref{alg:projected-reinforce} is based on geometric rollout horizons. Following \cite{Zhang}, a first geometric horizon is used to sample a state-action pair from the discounted occupancy measure, while a second independent geometric rollout estimates the corresponding action-value term. Compared with the algorithm in \cite{Zhang}, we add a projection step in order to keep the iterates in a compact parameter set.

This subsection contains two convergence results. Theorem~\ref{theo:projected-reinforce-convergence} gives almost sure asymptotic convergence to the set of constrained stationary points, while Theorem~\ref{theo:finite-time-constrained-stationarity} provides a finite-time bound for the tangential component of the policy gradient. The proofs are deferred to Appendix~\ref{app:proofs-reinforce}.

To formulate the projection explicitly, we define the compact parameter set onto which the algorithm projects after each update.

\begin{definition}[Box-constrained parameter set]
\label{def:box-parameters}
Fix $W_\Theta>0$. We denote by $\Theta$ the set of all parameter vectors $\theta$ for which every scalar entry of
$$W^{(1)},b^{(1)},\dots,W^{(K)},b^{(K)},W^G,b^G,W^N,b^N$$
lies in the interval $[-W_\Theta,W_\Theta]$. We denote by $\Pi_\Theta$ the Euclidean projection onto $\Theta$.
\end{definition}

After vectorization, $\Theta$ is a compact convex hyperrectangle. In particular, the weights of all admissible parameters $\theta\in\Theta$ satisfy Assumption~\ref{assum:bounds} for suitable constants $\beta_1,\ldots,\beta_K,\beta_G,\beta_N$. Together with the compactness of the state space, this implies that the activation function and its derivatives only need to be bounded on the compact range of pre-activation values generated by the network.

\begin{remark}
The box constraint in Definition~\ref{def:box-parameters} can be replaced by different bounded intervals for different trainable parameters. The resulting parameter set is still a compact hyperrectangle, and the convergence arguments remain unchanged.
\end{remark}

We consider the projected mini-batch update in Algorithm~\ref{alg:projected-reinforce}. We use the convention that $\mathrm{Geom}(p)$ is supported on $\mathbb N_0$ with $\mathbb P(T=t)=p(1-p)^t$.

\begin{algorithm}[!htb]
\caption{Mini-batch Projected REINFORCE with Geometric Rollouts}
\label{alg:projected-reinforce}
\begin{algorithmic}[1]
\Require initial parameter $\theta_0 \in \Theta$, initial state $s_0 \in \mathcal S$, step sizes $(\alpha_k)_{k\in\N_0}$, batch sizes $(m_k)_{k\in\N_0}$
\State $k \gets 0$
\While{not converged}
    \For{$i=1,\dots,m_k$}
        \State Set $S^{(i)}_0 \gets s_0$
        \State Sample $T_{k,i} \sim \mathrm{Geom}(1-\gamma)$
        \For{$t=0,\dots,T_{k,i}-1$}
            \State Sample $A^{(i)}_t \sim \pi_{\theta_k}(\cdot \mid S^{(i)}_t)$ on $\mathcal A(S^{(i)}_t)$ and $S^{(i)}_{t+1} \sim p(\cdot \mid S^{(i)}_t,A^{(i)}_t)$
        \EndFor
        \State Sample $A^{(i)}_{T_{k,i}} \sim \pi_{\theta_k}(\cdot \mid S^{(i)}_{T_{k,i}})$ on $\mathcal A(S^{(i)}_{T_{k,i}})$
        \State Set $\widetilde S^{(i)}_0 \gets S^{(i)}_{T_{k,i}}$ and $\widetilde A^{(i)}_0 \gets A^{(i)}_{T_{k,i}}$
        \State Sample $T'_{k,i} \sim \mathrm{Geom}(1-\sqrt{\gamma})$ independently of $T_{k,i}$ and the subsequent rollout
        \State Sample $\widetilde S^{(i)}_1 \sim p(\cdot \mid \widetilde S^{(i)}_0,\widetilde A^{(i)}_0)$
        \State Observe $\widetilde R^{(i)}_0 = r(\widetilde S^{(i)}_0,\widetilde A^{(i)}_0,\widetilde S^{(i)}_1)$
        \For{$t=1,\dots,T'_{k,i}$}
            \State Sample $\widetilde A^{(i)}_t \sim \pi_{\theta_k}(\cdot \mid \widetilde S^{(i)}_t)$ on $\mathcal A(\widetilde S^{(i)}_t)$ and $\widetilde S^{(i)}_{t+1} \sim p(\cdot \mid \widetilde S^{(i)}_t,\widetilde A^{(i)}_t)$
            \State Observe $\widetilde R^{(i)}_t = r(\widetilde S^{(i)}_t,\widetilde A^{(i)}_t,\widetilde S^{(i)}_{t+1})$
        \EndFor
        \State Compute $$\widehat g_{k,i} \gets \frac{1}{1-\gamma}\,\nabla_\theta \log \pi_{\theta_k}(A^{(i)}_{T_{k,i}} \mid S^{(i)}_{T_{k,i}}) \sum_{t=0}^{T'_{k,i}} \gamma^{t/2}\,\widetilde R^{(i)}_t$$
    \EndFor
    \State Compute the mini-batch estimator $\bar g_k \gets \frac{1}{m_k}\sum_{i=1}^{m_k}\widehat g_{k,i}$
    \State Update $\theta_{k+1} \gets \Pi_\Theta\!\left(\theta_k+\alpha_k\bar g_k\right)$
    \State $k \gets k+1$
\EndWhile
\end{algorithmic}
\end{algorithm}

The projection ensures that the network parameters remain in the compact set $\Theta$, so that the uniform score-function bounds from Subsection~\ref{subsec:Lipschitz} apply throughout the algorithm. Since the parameters are restricted to a hyperrectangle, convergence is stated in terms of constrained first-order stationarity. For a closed convex set $M\subset\R^d$ and $x\in M$, the normal cone at $x$ is defined by
$$N_M(x):=\bigl\{v\in\R^d:\langle v,x-c\rangle\ge 0 \text{ for all } c\in M\bigr\},$$
and the tangent cone at $x$ is given by
$$T_M(x):=\bigl\{w\in\R^d:\langle w,v\rangle\le 0 \text{ for all } v\in N_M(x)\bigr\}.$$

\begin{theorem}[Almost sure convergence to constrained stationary points]
\label{theo:projected-reinforce-convergence}
Consider the parametrized policy $\pi_\theta$ from Section~\ref{sec:gnn-architecture}, and let $(\theta_k)_{k\in\N_0}$ be the iterates generated by Algorithm~\ref{alg:projected-reinforce}. Assume that $m_k\in\N$ for all $k\in\N_0$ and that the step sizes satisfy
$$\alpha_k>0,\qquad \sum_{k=0}^\infty \alpha_k=\infty,\qquad \sum_{k=0}^\infty \alpha_k^2<\infty.$$
Further, let
$$S_\Theta:=\Bigl\{\theta\in\Theta:\Pi_{T_\Theta(\theta)}\bigl(\nabla J(\theta)\bigr)=0\Bigr\}$$
have empty interior. Then
$$\operatorname{dist}(\theta_k,S_\Theta)\xrightarrow[k\to\infty]{}0 \qquad\text{almost surely.}$$
\end{theorem}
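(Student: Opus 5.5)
We cast Algorithm~\ref{alg:projected-reinforce} as a projected stochastic approximation scheme
\[
\theta_{k+1}=\Pi_\Theta\bigl(\theta_k+\alpha_k(\nabla J(\theta_k)+\xi_{k+1})\bigr),\qquad \xi_{k+1}:=\bar g_k-\nabla J(\theta_k),
\]
and then invoke a generic convergence result for such schemes on a compact convex set, as in \cite{Borowski}, via the ODE method for projected dynamics. The required inputs are: (i) $\nabla J$ is Lipschitz on $\Theta$; (ii) $\xi_{k+1}$ is a martingale difference with respect to $\mathcal F_k=\sigma(\theta_0,\ldots,\theta_k)$; (iii) its conditional second moments are uniformly bounded; (iv) $J$ is a Lyapunov function for the projected ODE $\dot\theta=\Pi_{T_\Theta(\theta)}\nabla J(\theta)$ whose critical set is $S_\Theta$.

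\emph{Unbiasedness and bounded variance.} Following \cite{Zhang}, I would show that, conditionally on $\theta_k$, each $\widehat g_{k,i}$ is an unbiased estimator of $\nabla J(\theta_k)$. Sampling $T\sim\mathrm{Geom}(1-\gamma)$ makes $(S_T,A_T)$ distributed according to $\rho_{\theta_k}$. Given $(\widetilde S_0,\widetilde A_0)$, the independent horizon $T'\sim\mathrm{Geom}(1-\sqrt\gamma)$ gives
\[
\mathbb E\Bigl[\sum_{t=0}^{T'}\gamma^{t/2}\widetilde R_t\Bigr]=\sum_t \gamma^{t/2}\,\mathbb P(T'\ge t)\,\mathbb E[\widetilde R_t]=\sum_t\gamma^t\,\mathbb E[\widetilde R_t]=Q_{\theta_k}(\widetilde S_0,\widetilde A_0).
\]
Exchanging sum and expectation is justified by $|r|\le R_{\max}$ and Fubini. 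The policy gradient theorem then yields $\mathbb E[\widehat g_{k,i}\mid\mathcal F_k]=\nabla J(\theta_k)$, so $\mathbb E[\xi_{k+1}\mid\mathcal F_k]=0$. For the second moment, the score is bounded by $B_g$ uniformly on $\Theta\times\mathcal S\times\mathcal A$ (Theorem~\ref{theo:score-explicit} with uniform constants). The return term is bounded by $R_{\max}\sum_{t\le T'}\gamma^{t/2}\le R_{\max}/(1-\sqrt\gamma)$. Hence $\|\widehat g_{k,i}\|\le B_gR_{\max}/((1-\gamma)(1-\sqrt\gamma))$ almost surely, and the averaged noise is bounded for every $m_k\ge1$. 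Combined with $\sum\alpha_k^2<\infty$, the martingale $\sum\alpha_k\xi_{k+1}$ converges almost surely.

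\emph{Lipschitz gradient (main obstacle).} I expect the main difficulty to be proving that $\nabla J$ is Lipschitz on $\Theta$ when the state space is continuous. The plan is to use the standard argument (e.g.\ \cite{Zhang}) that bounded rewards, a uniformly bounded score $B_g$, and a uniformly Lipschitz score $L_g$ give $L_J$-smoothness, with
\[
L_J\lesssim R_{\max}\Bigl(\frac{L_g}{(1-\gamma)^2}+\frac{(1+\gamma)B_g^2}{(1-\gamma)^3}\Bigr).
\]
To get there, write $J(\theta)=\sum_t\gamma^t\,\mathbb E[R_t]$ and express each $\mathbb E[R_t]$ as an integral over trajectories with density $\prod_{u\le t}\pi_\theta(a_u\mid s_u)$ with respect to the transition kernels. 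Differentiate twice under the integral, which is justified by dominated convergence since the policy is $C^2$ in $\theta$ with bounded score. This produces the Hessian bound $\|\nabla^2J\|\le L_J$. Only parameter derivatives of $\pi_\theta$ are needed, never regularity in $s$, so no difficulty arises from the masks or from continuity in the state. Measurability holds because the admissible action sets are Borel.

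\emph{Conclusion.} With (i)--(iii) in place, the interpolated iterates form an asymptotic pseudo-trajectory of the projected ODE, whose Carathéodory solutions exist because $\Theta$ is a convex hyperrectangle. Along solutions,
\[
\tfrac{d}{dt}J(\theta(t))=\bigl\|\Pi_{T_\Theta(\theta)}\nabla J(\theta)\bigr\|^2\ge0,
\]
with equality exactly on $S_\Theta$, so $J$ is a strict Lyapunov function. The set $J(S_\Theta)$ must have empty interior for the Lyapunov argument of \cite{Borowski} to go through. I would check that this follows from the hypothesis that $S_\Theta$ has empty interior, using the $C^2$ smoothness of $J$ if a Sard-type argument is needed, and otherwise take it as the form in which the hypothesis is used. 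Granting this, the limit set of $(\theta_k)$ is contained in $S_\Theta$ almost surely, and $\operatorname{dist}(\theta_k,S_\Theta)\to0$ almost surely.
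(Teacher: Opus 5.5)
Everything up to the final step is the paper's argument. You use the same rewriting $\theta_{k+1}=\Pi_\Theta(\theta_k+\alpha_k(\nabla J(\theta_k)+e_k))$ with $e_k=\bar g_k-\nabla J(\theta_k)$, and the same inputs from \cite{Zhang} plus the uniform score bounds: Lipschitz continuity of $\nabla J$, and conditional unbiasedness and almost-sure boundedness of $\widehat g_{k,i}$. You also get almost-sure convergence of $\sum_k\alpha_k e_k$ as an $L^2$-bounded martingale, as the paper does. Finally, you use the Lyapunov function $C-J$, whose derivative along the projected field is $-\|\Pi_{T_\Theta(\theta)}\nabla J(\theta)\|^2$ by Moreau's decomposition, and feed all of this into Theorem~4.1 of \cite{Borowski}. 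One small repair is needed: take $\mathcal F_k$ to be generated by all rollout samples of iterations $0,\ldots,k-1$, as the paper does. With $\mathcal F_k=\sigma(\theta_0,\ldots,\theta_k)$, the noise $\bar g_k-\nabla J(\theta_k)$ need not be $\mathcal F_{k+1}$-measurable, because the projection can forget $\bar g_k$, so your partial sums are not adapted.

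The genuine gap is the last step, which you leave conditional, and the bridge you propose does not work, for three reasons. First, Sard's theorem for a real-valued function on $\R^p$ needs $C^p$ regularity with $p=\dim\Theta$ (in the thousands here), while a twice differentiable activation gives you only two derivatives of $J$. Second, points of $S_\Theta$ on the boundary of the box are critical points only of $J$ restricted to a face, not of $J$, so a face-by-face argument would be required. Third, an empty interior of $S_\Theta$ does not in general imply an empty interior of $J(S_\Theta)$: Whitney-type functions are non-constant along an arc of critical points. So, as written, you prove the theorem under a hypothesis different from the stated one.

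The paper makes no such detour. Having identified $S_\Theta$ with $\{\theta\in\Theta:\langle\nabla V(\theta),\Pi_{T_\Theta(\theta)}\nabla J(\theta)\rangle=0\}$, it uses the stated empty-interior hypothesis verbatim as the nondegeneracy assumption of Theorem~4.1 in \cite{Borowski}. Your suspicion is still reasonable: the classical Lyapunov-type results (Bena\"{\i}m; Andrieu--Moulines--Priouret) place this condition on the image $V(S)$ rather than on $S$. You should therefore check which form that theorem actually assumes. If it is the image form, the repair is to assume directly that $J(S_\Theta)$ has empty interior. Alternatively, use a $C^\infty$ activation such as $\tanh$: then $J$ is $C^\infty$, and Sard applied to $J$ on each relatively open face of the hyperrectangle shows $J(S_\Theta)$ is Lebesgue-null. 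Either way, it cannot be deduced from the interior condition on $S_\Theta$.
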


\begin{remark}\label{rem:interpretation-convergence}
Although Theorem~\ref{theo:projected-reinforce-convergence} does not imply convergence of the full sequence $(\theta_k)_{k\in\N_0}$, compactness of $\Theta$ guarantees the existence of convergent subsequences. Every accumulation point $\bar\theta$ satisfies $\Pi_{T_\Theta(\bar\theta)}\bigl(\nabla J(\bar\theta)\bigr)=0$ and is thus a constrained first-order stationary point of $\max_{\theta\in\Theta} J(\theta).$ If $\bar\theta$ lies in the interior of $\Theta$, this condition reduces to $\nabla J(\bar\theta)=0$.
\end{remark}

The previous theorem gives an asymptotic almost sure convergence statement, but does not provide a rate, since its proof is based on a compactness argument. The next result gives a finite-time bound for the tangential component of the policy gradient. For the finite-time estimate, set
$$L_J:=\frac{R_{\max}}{(1-\gamma)^{2}}\left( L_g+\frac{1+\gamma}{1-\gamma} B_g^2\right), \qquad \sigma:=\frac{B_gR_{\max}}{(1-\gamma)(1-\sqrt{\gamma})}.$$

\begin{theorem}[Finite-time constrained stationarity bound]
\label{theo:finite-time-constrained-stationarity}
Consider the parametrized policy $\pi_\theta$ from Section~\ref{sec:gnn-architecture}, and let $(\theta_k)_{k=0}^{N}$ be generated by Algorithm~\ref{alg:projected-reinforce}. Assume that $0<\alpha_k<2/L_J$ for $k=0,\ldots,N-1$.

Define
\[w_k:=\alpha_k-\frac{L_J}{2}\alpha_k^2, \qquad W_N:=\sum_{k=0}^{N-1}w_k,\qquad \mathbb P(\tau=k):=\frac{w_k}{W_N},\]
where $\tau$ is independent of the algorithmic randomness. Further, set
$\Delta_0:=\max_{\theta\in\Theta}J(\theta)-J(\theta_0)$ and $\alpha_{\max}:=\max_{0\leq k\leq N-1}\alpha_k$.

Then
\[\mathbb E\!\left[ \left\|\Pi_{T_\Theta(\theta_{\tau+1})}\bigl(\nabla J(\theta_{\tau+1})\bigr)\right\|^2\right]
\leq 2(1+L_J\alpha_{\max})^2 \frac{ \Delta_0+\sigma^2\sum_{k=0}^{N-1}\frac{\alpha_k}{m_k}}{W_N}+2\sigma^2\sum_{k=0}^{N-1}\frac{w_k}{W_Nm_k}.\]

\noindent The expectation is taken with respect to $\tau$ and the rollout randomness used to generate the iterates.
\end{theorem}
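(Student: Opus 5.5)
The argument has three ingredients: an $L_J$-smoothness property of $J$ on $\Theta$, unbiasedness and a variance bound for the mini-batch estimator $\bar g_k$, and a projected stochastic-gradient descent lemma in the spirit of \cite{Ghadimi2016}. The descent lemma is written in terms of the gradient mapping $G_k:=(\theta_{k+1}-\theta_k)/\alpha_k$. The last step converts a bound on $G_k$ into a bound on the tangential projection of $\nabla J$ at $\theta_{k+1}$.

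First, I will show that $\nabla J$ is $L_J$-Lipschitz on $\Theta$. Write $J(\theta)=\sum_t\gamma^t\E[R_t]$ and differentiate trajectory-wise. This gives $\nabla J(\theta)=\E\bigl[\sum_t\gamma^t R_t\sum_{u\le t}g_{(S_u,A_u)}(\theta)\bigr]$. Differentiating once more produces two kinds of terms.
\begin{itemize}
\item Terms containing $\nabla g$ are bounded by $L_g$; weighting by $(t+1)\gamma^t$ and summing gives $R_{\max}L_g/(1-\gamma)^2$.
\item Terms containing a product of two score functions are bounded by $B_g^2$; their sums involve $\E[(\sum_{u\le t}g_u)(\sum_{v\le t}g_v)^\top]$, which yields the factor $(1+\gamma)/(1-\gamma)^3$.
\end{itemize}
Both bounds use the uniform constants $B_g,L_g$ from Theorem~\ref{theo:score-explicit}, valid on the compact $\Theta\times\mathcal S$. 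Next, by the construction of \cite{Zhang}:
\begin{itemize}
\item $S_{T},A_{T}$ with $T\sim\mathrm{Geom}(1-\gamma)$ is distributed as $\rho_\theta$.
\item $\E\bigl[\sum_{t\le T'}\gamma^{t/2}\widetilde R_t\bigr]=Q_\theta$, since $\mathbb P(T'\ge t)=\gamma^{t/2}$.
\end{itemize}
Hence $\E[\widehat g_{k,i}\mid\theta_k]=\nabla J(\theta_k)$. Moreover $\|\widehat g_{k,i}\|\le \frac{B_g R_{\max}}{1-\gamma}(T'+1)$, and the second moment of $T'+1$ is controlled by $(1-\sqrt\gamma)^{-2}$. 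So $\E\|\widehat g_{k,i}-\nabla J\|^2\le\sigma^2$, and by conditional independence within the batch, $\E[\|\bar g_k-\nabla J(\theta_k)\|^2\mid\theta_k]\le\sigma^2/m_k$.

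Second, the descent step. Let $\xi_k:=\bar g_k-\nabla J(\theta_k)$. Smoothness gives $J(\theta_{k+1})\ge J(\theta_k)+\alpha_k\langle\nabla J(\theta_k),G_k\rangle-\frac{L_J}{2}\alpha_k^2\|G_k\|^2$. The projection characterization gives $\langle \bar g_k,G_k\rangle\ge\|G_k\|^2$. Combining the two yields
\[
J(\theta_{k+1})\ge J(\theta_k)+w_k\|G_k\|^2-\alpha_k\langle\xi_k,G_k\rangle .
\]
To handle the cross term, I compare with the noiseless mapping $\tilde G_k$ obtained by projecting the step along $\nabla J(\theta_k)$. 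Nonexpansiveness of $\Pi_\Theta$ gives $\|G_k-\tilde G_k\|\le\|\xi_k\|$, and $\E[\langle\xi_k,\tilde G_k\rangle\mid\theta_k]=0$. Hence $\E[\alpha_k\langle\xi_k,G_k\rangle]\le\alpha_k\sigma^2/m_k$. Summing over $k$, telescoping with $J(\theta_N)\le\max_\Theta J$, and dividing by $W_N$ gives
\[
\E\|G_\tau\|^2\le\Bigl(\Delta_0+\sigma^2\sum_k\alpha_k/m_k\Bigr)/W_N .
\]

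Third, the conversion step, which I expect to be the main obstacle. By optimality of the projection,
\[
v:=\theta_k+\alpha_k\bar g_k-\theta_{k+1}\in N_\Theta(\theta_{k+1}),
\]
so $\bar g_k-G_k\in N_\Theta(\theta_{k+1})$. Since $T$ and $N$ are polar cones, for any $n\in N$ we have $\|\Pi_{T}(u)\|\le\|u-n\|$. Applying this with $u=\nabla J(\theta_{k+1})$ and $n=\bar g_k-G_k$ gives
\[
\|\Pi_{T_\Theta(\theta_{k+1})}\nabla J(\theta_{k+1})\|\le\|\nabla J(\theta_{k+1})-\nabla J(\theta_k)\|+\|G_k\|+\|\xi_k\|\le(1+L_J\alpha_k)\|G_k\|+\|\xi_k\|.
\]
Squaring with $(x+y)^2\le2x^2+2y^2$, bounding $\alpha_k\le\alpha_{\max}$, and averaging with weights $w_k/W_N$ reproduces exactly the two terms of the stated bound. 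The delicate points are the measurability and conditioning needed to split $\E\|\xi_\tau\|^2$ with the independent $\tau$, and making sure the noise bound in the second step does not lose factors. The constant $L_J$ from the first step is routine but tedious.
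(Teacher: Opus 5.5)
Your argument is the paper's proof, step for step. It has the same three ingredients: conditional unbiasedness with a $\sigma^2/m_k$ variance bound for $\bar g_k$; the bound $\E\|G_\tau\|^2\le(\Delta_0+\sigma^2\sum_k\alpha_k/m_k)/W_N$ on the gradient mapping; and the conversion via $\bar g_k-G_k\in N_\Theta(\theta_{k+1})$ and Moreau's decomposition, which yields $(1+L_J\alpha_k)\|G_k\|+\|\xi_k\|$ exactly as in the paper. The only difference is that you re-derive two results the paper cites. The paper takes the $L_J$-smoothness from Lemma~4.2 and Theorem~4.3 of \cite{Zhang}, and the descent inequality from Corollary~2 of \cite{Ghadimi2016}. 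Your inline derivations of both are correct.

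One step, however, does not deliver the stated constant. You bound $\|\widehat g_{k,i}\|\le\frac{B_gR_{\max}}{1-\gamma}(T'+1)$ and then claim the second moment of $T'+1$ is controlled by $(1-\sqrt\gamma)^{-2}$. For $T'\sim\mathrm{Geom}(1-\sqrt\gamma)$ on $\mathbb N_0$ one has $\E[(T'+1)^2]=\frac{1+\sqrt\gamma}{(1-\sqrt\gamma)^2}$. Your route therefore only gives $\E\|\widehat g_{k,i}\|^2\le(1+\sqrt\gamma)\sigma^2$, and every $\sigma^2$ in the final bound would pick up an extra factor $(1+\sqrt\gamma)$.

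The loss comes from discarding the weights $\gamma^{t/2}$. Keep them instead. Pathwise, $\sum_{t=0}^{T'}\gamma^{t/2}|\widetilde R_t|\le R_{\max}\sum_{t\ge 0}\gamma^{t/2}=R_{\max}/(1-\sqrt\gamma)$, so $\|\widehat g_{k,i}\|\le\sigma$ almost surely. Unbiasedness then gives $\E[\|\widehat g_{k,i}-\nabla J(\theta_k)\|^2\mid\mathcal F_k]=\E[\|\widehat g_{k,i}\|^2\mid\mathcal F_k]-\|\nabla J(\theta_k)\|^2\le\sigma^2$. This is exactly how the paper obtains $\sigma$, and with it the rest of your argument goes through unchanged.
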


\begin{remark}
For constant batch sizes $m_k=m$, the right-hand side in Theorem~\ref{theo:finite-time-constrained-stationarity} does not vanish as $N$ tends to infinity, independently of the choice of the step sizes. As $w_k\leq\alpha_k$, we have
\begin{align*}
&2(1+L_J\alpha_{\max})^2
\frac{\Delta_0+\sigma^2\sum_{k=0}^{N-1}\frac{\alpha_k}{m}}{W_N}
+
2\sigma^2\sum_{k=0}^{N-1}\frac{w_k}{W_Nm}
\geq
\left(2(1+L_J\alpha_{\max})^2+2\right)\frac{\sigma^2}{m}.
\end{align*}
For batch sizes satisfying $m_k\xrightarrow[k\to\infty]{}\infty$ and step sizes such that, for some $\overline\alpha<2/L_J$, $$0<\alpha_k\leq\overline\alpha,\qquad \sum_{k=0}^{\infty}\alpha_k=\infty,$$
the bound in Theorem~\ref{theo:finite-time-constrained-stationarity} converges to zero.
\end{remark}

\newcommand{\numresultcell}[3]{\makecell[c]{\scriptsize $#1$\\[-0.22em]{\tiny Cap./Def.: $#2/#3$}}}
\newcommand{\nummodelhead}[2]{\makecell[c]{\scriptsize #1\\[-0.20em]{\tiny #2 parameters}}}
\newcommand{\numplainhead}[1]{\makecell[c]{\scriptsize #1}}
\newcommand{\networkpanel}[3]{%
    \begin{subfigure}[t]{0.18\textwidth}
        \centering
        \includegraphics[width=\linewidth,height=0.088\textheight,keepaspectratio]{#1}
        \caption{#2}
        \label{#3}
    \end{subfigure}%
}

\section{Applications to Systemic Risk}\label{sec:numerical-results}
This section applies the framework developed above to the control of systemic risk in financial networks. At each time step, a central regulator may inject capital into solvent banks in order to reduce losses caused by future default cascades. The graph-level action determines the total intervention budget, while the node-level actions determine its allocation across the banks. The numerical experiments investigate the performance of the proposed policy architecture across different network structures, with particular emphasis on the role of graph topology, locality-based fairness, and scalability. We first formulate the intervention problem and describe the numerical setup, then evaluate the learned strategies on small benchmark networks and larger randomly generated financial networks.

Since we represent each financial network as a weighted, directed graph, we use the terms network and graph interchangeably throughout this section.

\subsection{Dynamic Financial Network Model}
\label{subsec:financial-network-model}

We consider a financial system consisting of $n$ banks whose bilateral obligations form a directed, weighted network. Each bank is represented as a node of the network, and a directed edge from $i$ to $j$ carries weight $L_{ij}(t)$, representing the nominal interbank liability of bank $i$ towards bank $j$ at time $t$. Liabilities towards entities outside the modeled banking system are included as node features.

In addition to their interbank exposures, institutions hold portfolios of external assets. These consist of two components: a risk-free asset whose value is normalized and does not vary over time, and a risky asset whose market price evolves endogenously. Modeling external assets in this way is common in the literature, see \cite{FS-Cifuentes, FS-Amini, AW_15}.

Let
$$V:=V_{\mathrm{rf}}\times V_{\mathrm{risky}}\times V_{\mathrm{out}}\times V_S\subset\mathbb R^4$$
be the node-feature space, where the components correspond to risk-free asset holdings, risky asset holdings, liabilities towards the outside sector, and the current price of the risky asset. We assume that all component spaces are compact intervals, not necessarily identical. The component spaces of the asset positions may contain negative values, corresponding for instance to short positions. 

We denote by \(X(t)\in V^n\) the node-feature matrix at time \(t\). The market price of the risky asset is denoted by \(S(t)\in V_S\) and is included as a node feature by assigning the same value to every node, that is,
$$
X_{i,S}(t)=S(t),\qquad i=1,\ldots,n.
$$

The interbank liability matrix satisfies \(L(t)\in W^{n\times n}\), where \(W\subset\mathbb R_{\geq 0}\) is a compact interval. Hence, the state of the financial network at time \(t\) is given by \((X(t),L(t))\), and the state space is
\begin{equation}
\label{eq:financial-state-space}
\mathcal S \subseteq V^n \times W^{n\times n}.
\end{equation}

This is a graph-valued state space of the form \eqref{eq:graph-state-space} with node-feature space $V$.
With a slight abuse of notation, for a state $s=(X,L)\in\mathcal S$, we write $S(s)$ for the common asset-price feature. The total interbank liabilities and total interbank claims of bank $i$ ar determined by the liability matrix and are given by
$$\ell_i^{\mathrm{ib}}(L):=\sum_{j=1}^n L_{ij},\qquad \chi_i^{\mathrm{ib}}(L):=\sum_{j=1}^n L_{ji}.$$
Thus, the total asset value of bank \(i\) in state \(s\) is \(\chi_i^{\mathrm{ib}}(L)+X_{i,\mathrm{rf}}+X_{i,\mathrm{risky}}S(s)\), whereas its total liabilities are \(\ell_i^{\mathrm{ib}}(L)+X_{i,\mathrm{out}}\). Its net worth is therefore
$$C_i(s):=\chi_i^{\mathrm{ib}}(L)+X_{i,\mathrm{rf}}+X_{i,\mathrm{risky}}S(s)-\ell_i^{\mathrm{ib}}(L)-X_{i,\mathrm{out}}.$$
A bank is solvent in state $s$ if $C_i(s)\geq 0$ and insolvent otherwise.

At each time step, the regulator may inject capital into solvent banks.\footnote{Our framework can be extended to incorporate support for insolvent banks or the provision of loss-coverage guarantees as an alternative to direct capital injections, both of which have been widely used in financial crisis interventions \cite{Panetta2009BIS}.} The additional capital is added to the risk-free asset holdings of each supported bank. The total amount of capital the regulator can distribute at each decision is limited. Since insolvent banks are excluded from support, interventions are preemptive. This leads to a state-dependent action space.

The financial network action space follows the mixed action structure introduced in \eqref{eq:general-mixed-action-space}. The graph-level decision is the total intervention budget, while the node-level decisions are allocation weights assigned to the individual banks. Let \(\mathcal A_{\mathrm{bud}}\subset\mathbb R_{\geq 0}\) and \(\mathcal A_{\mathrm{node}}\subset\mathbb R_{\geq 0}\) be finite sets of admissible budget levels and admissible allocation weights, respectively, with \(0\in\mathcal A_{\mathrm{bud}}\cap\mathcal A_{\mathrm{node}}\).
In the notation of the general action space, we set \(\mathcal A_{\mathrm{glob}}=\mathcal A_{\mathrm{bud}}\), \(a_{\mathrm{node}}^0=0\), and \(\eta(a_{\mathrm{bud}})=\1_{\{a_{\mathrm{bud}}>0\}}\).
We denote the cardinalities of the action sets by $K_B=|\mathcal A_{\mathrm{bud}}|$ and $K_{\mathrm{node}}=|\mathcal A_{\mathrm{node}}|$.

For a given state $s\in\mathcal S$, the feasible action set for bank $i$ is
\begin{align*}
\mathcal A_{\mathrm{node},i}(s)=\begin{cases}
\mathcal A_{\mathrm{node}}, & \text{if bank $i$ is solvent in state $s$},\\
\{0\}, & \text{if bank $i$ is insolvent in state $s$}.
\end{cases}
\end{align*}
Thus, insolvent banks cannot receive corrective support.

An action $a=(a_{\mathrm{bud}},a_1,\ldots,a_n)$, where $a_{\mathrm{bud}}\in\mathcal A_{\mathrm{bud}}$ and $a_i\in\mathcal A_{\mathrm{node},i}(s)$, induces the actual injection vector
\begin{equation}
\label{eq:injection-vector}
I(a)\in\mathbb R_{\geq 0}^n,\qquad I(a)_i= 
\begin{cases}
a_{\mathrm{bud}}\dfrac{a_i}{\sum_{j=1}^n a_j}, & \text{if }\sum_{j=1}^n a_j>0,\\[1em]
0, & \text{if }\sum_{j=1}^n a_j=0.
\end{cases}
\end{equation}
If the agent chooses action $a$, node $i$ receives $I(a)_i$ additional resources. Hence, the chosen budget is distributed proportionally according to the node-wise weights. This yields the feasible action space
\begin{equation}
\label{eq:action-space}
\mathcal A(s)=\bigl(\{0\}\times\{0\}^n\bigr)\cup\Bigl((\mathcal A_{\mathrm{bud}}\setminus\{0\})\times\prod_{i=1}^n\mathcal A_{\mathrm{node},i}(s)\Bigr).
\end{equation}
We note that different admissible actions may induce the same injection vector and therefore the same immediate effect on the financial system.

The transition mechanism works as follows. After a shock to external assets or otherwise triggered stress, banks may be unable to meet their obligations in full. In such a situation, their creditors incur losses, which may in turn render further institutions insolvent, leading to a cascade of defaults through the network. 

The seminal clearing model of Eisenberg and Noe \cite{EN01} provides a simple and elegant framework for analyzing the propagation of such default cascades in financial networks. Extensions of this framework incorporate bankruptcy costs \cite{RV13}, liquidation and default costs \cite{FS-Amini}, and fire-sale effects \cite{FS-Cifuentes,AW_15}. These models mainly study clearing and contagion in static or single-period settings, and have been the basis for explicit numerical methods to calculate systemic risk measures \cite{doi:10.1287/opre.2021.0040,gonon2026systemic}. We follow the extension of Weber and Weske \cite{AW_15}, which incorporates both bankruptcy costs and fire-sale effects into the clearing mechanism.

With bankruptcy costs, the value of assets in default can only be recovered partly, leading to an actual loss of value in the system. This differs from the ordinary Eisenberg--Noe algorithm, where clearing only redistributes wealth. Fire sales further amplify this effect by a system-wide mechanism. In our setup, only insolvent banks sell their risky assets. As interbank liabilities are settled through the risk-free asset, this may lead to a negative position in the risk-free asset for a solvent bank. Nonetheless, such a bank does not liquidate its risky asset holdings.

At each time step $t$, the evolution of the system proceeds in three stages. Starting from the post-clearing state $(X(t),L(t))$ with common asset price $S(t)$, the regulator first chooses an action $a_t$, and the induced capital injection $I(a_t)$ is added to the risk-free asset holdings of the supported banks. Next, the price of the risky asset evolves according to an exogenous stochastic process. In our implementation, we model this dynamics by an Ornstein--Uhlenbeck process, yielding an updated pre-clearing price $S^-(t)$.\footnote{The Ornstein--Uhlenbeck process is chosen deliberately for numerical convenience. In particular, the mean-reverting dynamics yield a more stationary environment, which allows us to isolate and better understand the effects of network interactions and regulatory interventions. We emphasize that the framework is sufficiently flexible to accommodate more realistic asset price dynamics, and extending the analysis in this direction is conceptually straightforward.} Together, the intervention and the price update define the pre-clearing node features
$$X^-_{i,\mathrm{rf}}(t)=X_{i,\mathrm{rf}}(t)+I(a_t)_i,\qquad X^-_{i,\mathrm{risky}}(t)=X_{i,\mathrm{risky}}(t),\qquad X^-_{i,\mathrm{out}}(t)=X_{i,\mathrm{out}}(t),\qquad X^-_{i,S}(t)=S^-(t).$$
The pre-clearing liability matrix is given by $L^-(t)=L(t)$, so that the pre-clearing state is \newline$(X^-(t),L^-(t))$.

Lastly, in case the pre-clearing system is in stress and banks are unable to meet their obligations in full, the new cleared financial network is determined by
$$(X(t+1),L(t+1))=\Lambda(X^-(t),L^-(t)),$$
where $\Lambda$ denotes the clearing operation. In case of insolvency, the clearing payment vector is first determined for the system. Subsequently, only those interbank liabilities involving at least one insolvent bank are settled according to this vector, meaning that the corresponding clearing payments are settled and the associated edges are removed from the graph. Liabilities between solvent banks remain unchanged. Hence, over time, the system can only lose edges, but cannot gain new ones. If all banks remain solvent, the new state only differs by the price of the risky asset and the injected capital.

Together with the distribution of the exogenous price shock, this defines the transition kernel $p(\cdot\mid s,a)$.
These transition dynamics lead to a particularly challenging learning problem, as the network structure only changes when defaults occur. At the same time, such events can result in substantial changes to the network. For example, in the 200 node networks considered in the numerical experiments below, when no capital is injected, new defaults occur on average once per 100-step episode, with approximately 92\% of the banks ending up insolvent.

For a transition from state $s$ to state $s'$ under action $a$, the regulator receives the reward
\begin{align}
\label{eq:reward-per-time-step}
r(s,a,s') := -\kappa a_{\mathrm{bud}} +\sum_{i=1}^n \left(\widehat C_i(s')- C_i(s)\right) - \ell_{\mathrm{out}}(s,a,s') -\lambda D_{\mathrm{new}}(s,a,s').
\end{align}
Here, $\kappa>1$ is a fixed intervention cost parameter, $D_{\mathrm{new}}(s,a,s')$ counts newly defaulted banks, and $\lambda\geq 0$ is a penalty factor. $\widehat C_i(s')$ denotes the net worth of bank $i$ according to the clearing operation generating state $s'$. Claims are valued according to the clearing payment vector, while own liabilities are considered with their nominal value. The term $\ell_{\mathrm{out}}(s,a,s')$ denotes the losses of external creditors due to default, measured as the difference between the nominal value of their affected claims and the corresponding clearing payments.

The first term captures the direct cost of the intervention, while the second term measures the change in aggregate bank value. The third term accounts for losses of external creditors, and the fourth penalizes newly defaulted banks.
The particular reward specification in \eqref{eq:reward-per-time-step} is only one possible choice. More generally, the algorithm can be applied to arbitrary bounded reward functions.

Since the state space is compact\footnote{The Ornstein-Uhlenbeck is restricted to a compact interval, by projecting values outside the interval onto the boundaries.} and the admissible action sets are finite, the reward is bounded. Hence, the financial network model satisfies the bounded-reward assumption from the graph-valued MDP formulation above.

\subsection{Numerical Setup and Hyperparameters}
\label{subsec:systemic-risk-numerical-setup}
We consider fixed benchmark networks with ten banks, as well as randomly generated networks with ten or 200 banks. The fixed networks make it possible to relate performance differences to structural properties of the graph. The randomly generated networks test whether a policy can learn an allocation rule across different initial financial systems.
In the following, we denote the class of randomly generated financial networks with $n$ banks by Random$n$.

In addition to the features introduced in Subsection~\ref{subsec:financial-network-model}, the numerical observation contains total interbank liabilities, liabilities towards the outside sector, total interbank claims, and a default-status indicator for each bank. To account for bankruptcy costs, we assume that only $80\%$ of the asset value of an insolvent bank is available for redistribution to its creditors. 
 
The complete code for our simulation and training setup, as well as additional configurations, is available on \href{https://github.com/flknm/global-local-rl-dynamic-networks}{GitHub}.

\subsubsection{Network Construction}
\paragraph{Small Benchmark Networks}
Figure~\ref{fig:small-networks} shows the fixed benchmark networks. Green nodes represent banks without exposure to the risky asset, while nodes become increasingly red as their dependence on the risky asset increases. The Core-Periphery, Star, and Inverse Star networks are used for the main ten-bank experiments. The Symmetric Stars and Mirrored Cycles networks are separate deterministic one-decision examples especially used to examine the implications of fairness.

\begin{figure}[t]
    \centering
    \captionsetup[subfigure]{font=scriptsize,justification=centering,skip=1pt}
    \setlength{\abovecaptionskip}{2pt}
    \setlength{\belowcaptionskip}{0pt}
    \networkpanel{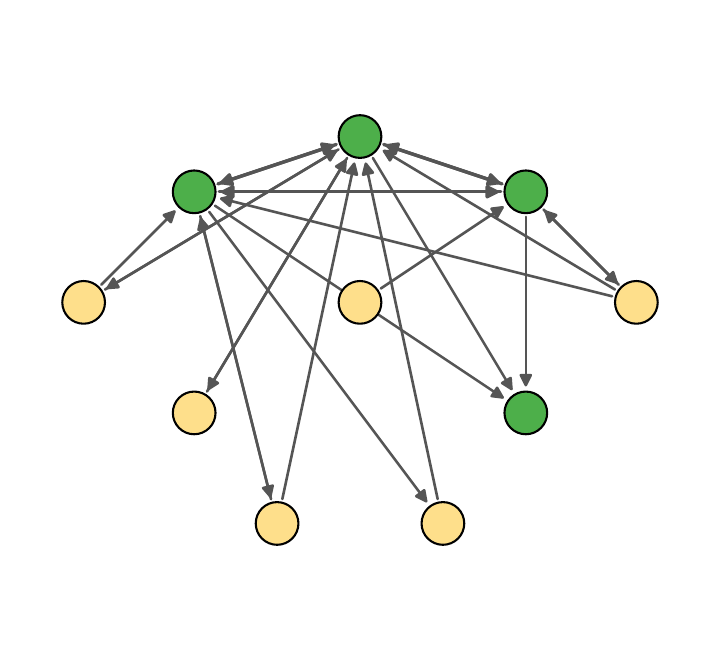}{Core-periphery}{fig:network-standard}\hfill
    \networkpanel{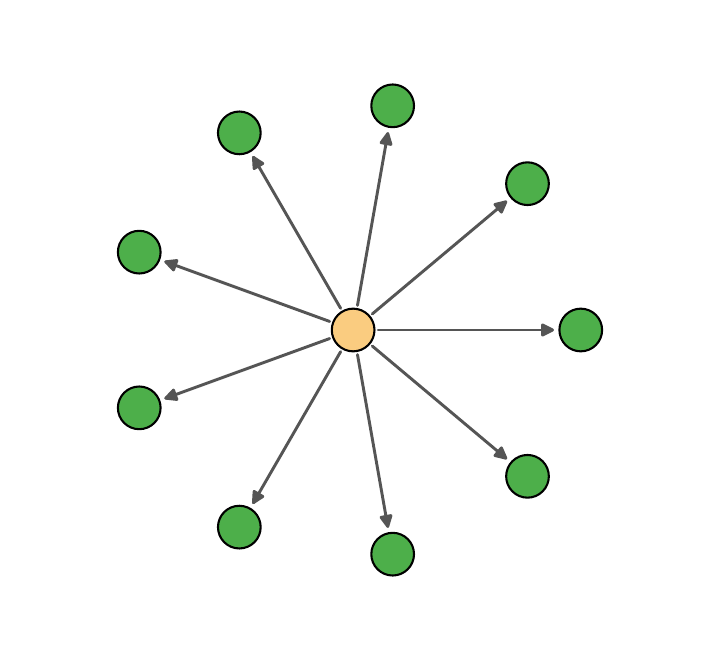}{Star}{fig:network-star}\hfill
    \networkpanel{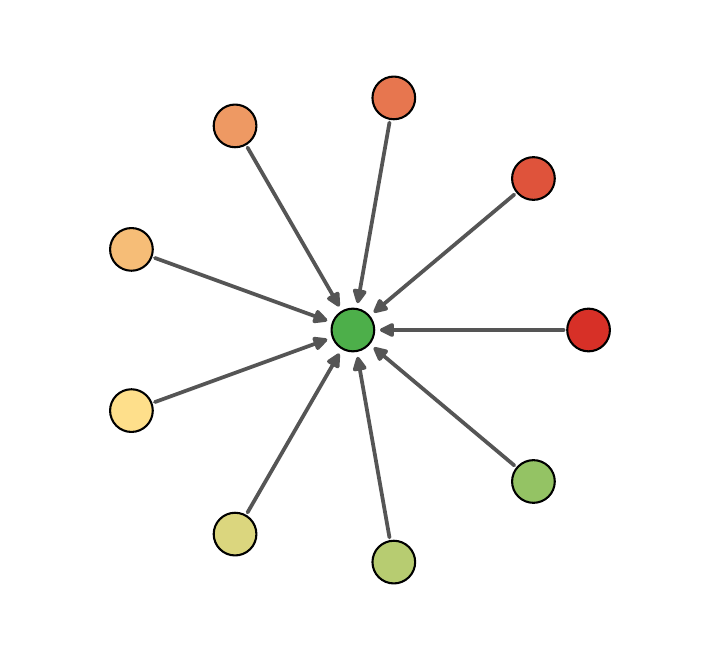}{Inverse Star}{fig:network-no-graph}\hfill
    \networkpanel{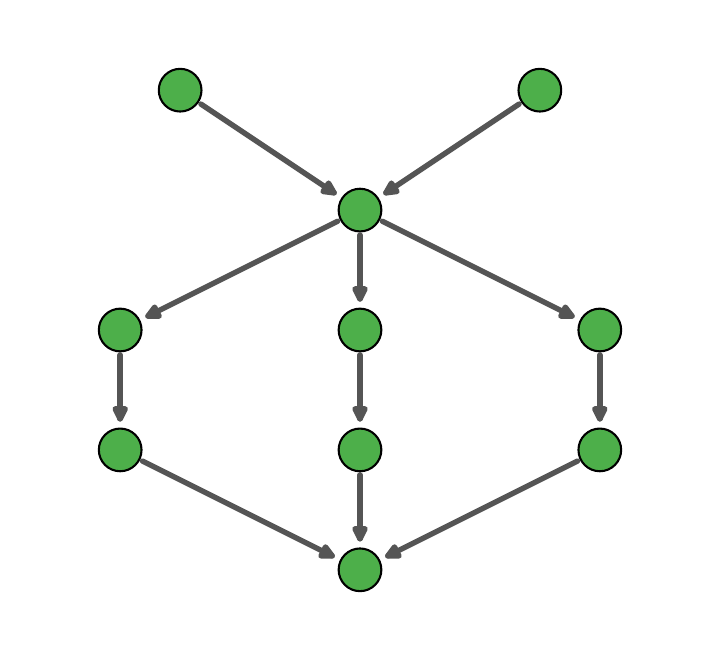}{Sym. Stars}{fig:network-sym-stars}\hfill
    \networkpanel{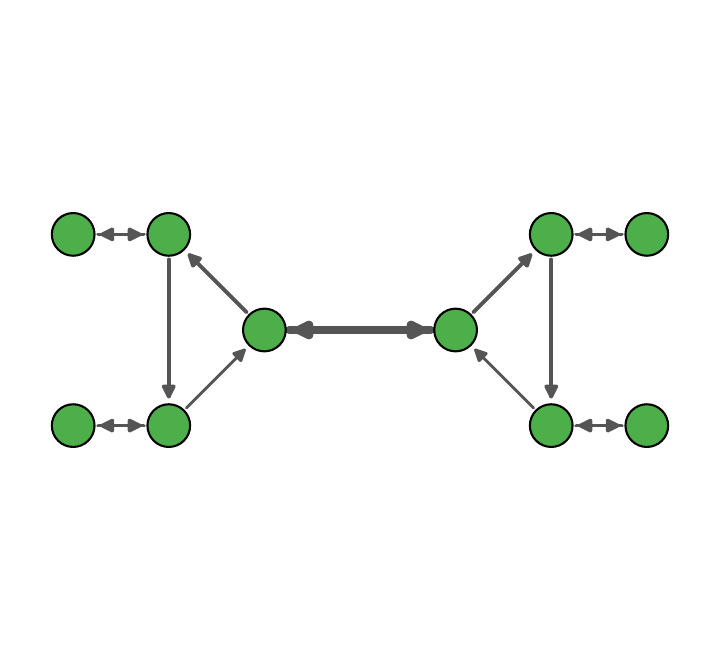}{Mirr. Cycles}{fig:network-sym-circles}
    \caption{Fixed ten-bank benchmark networks. Node colors indicate exposure to the risky asset and edges point from debtors to creditors.}
    \label{fig:small-networks}
\end{figure}

Numbering the banks from 1 to 10, Tables~\ref{tab:benchmark-liabilities}
and~\ref{tab:benchmark-features} specify the initial interbank liabilities and node features of the fixed benchmark networks, respectively.

\begin{table}[t]
\centering
\small
\setlength{\tabcolsep}{5pt}
\renewcommand{\arraystretch}{1.15}
\caption{Initial nonzero interbank liabilities of the fixed benchmark networks. Each ordered pair $(i,j)$ denotes a liability from bank $i$ to bank $j$ with the weight shown in the second column. All unlisted liabilities are zero.}
\label{tab:benchmark-liabilities}
\begin{tabularx}{\textwidth}{@{}l r X@{}}
\toprule
Network & Weight & Ordered pairs $(i,j)$ \\
\midrule

Core-Periphery
& $0.75$
& $(1,2),(1,3),(2,1),(2,3),(3,1),(3,2)$ \\

& $0.32$
& $(4,3),(5,1),(5,2),(6,1),(6,2),(6,3),(7,1),(9,1),(9,2),(10,1)$ \\

& $0.26$
& $(1,5),(1,7),(1,8),(2,8),(2,9),(2,10),(3,6),(3,8)$ \\

\addlinespace

Star
& $0.5$
& $(1,j)$ for $j=2,\ldots,10$ \\

\addlinespace

Inverse Star
& $1$
& $(i,5)$ for $i\in\{1,2,3,4,6,7,8,9,10\}$ \\

\addlinespace

Symmetric Stars
& $2$
& $(1,3),(2,3),(3,4),(3,5),(3,6),(4,7),(5,8),(6,9),
(7,10),(8,10),(9,10)$ \\

\addlinespace

Mirrored Cycles
& $3.9$
& $(1,4),(4,1)$ \\

& $1$
& $(1,2),(2,3),(2,7),(3,8),(4,5),(5,6),(5,9),(6,10),
(7,2),(8,3),(9,5),(10,6)$ \\

& $0.5$
& $(3,1),(6,4)$ \\

\bottomrule
\end{tabularx}
\end{table}
\begin{table}[t]
\centering
\small
\setlength{\tabcolsep}{5pt}
\renewcommand{\arraystretch}{1.15}
\caption{Initial node features of the fixed benchmark networks.}
\label{tab:benchmark-features}
\begin{tabularx}{\textwidth}{@{}l X@{}}
\toprule
Network & Initial node features \\
\midrule

Core-Periphery
&
$(X_{i,\mathrm{rf}})_{i=1}^{10}
=
(X_{i,\mathrm{risky}})_{i=1}^{10}
=
(0,0,0,0.18,0.21,0.38,0.03,0,0.21,0.03)^\top$,

$(X_{i,\mathrm{out}})_{i=1}^{10}=0$.
\\

\addlinespace

Star
&
$(X_{i,\mathrm{rf}})_{i=1}^{10}
=
(2.2,0,\ldots,0)^\top$,

$(X_{i,\mathrm{risky}})_{i=1}^{10}
=
(2.75,0,\ldots,0)^\top$,

$(X_{i,\mathrm{out}})_{i=1}^{10}
=
(0,0.5,\ldots,0.5)^\top$.
\\

\addlinespace

Inverse Star
&
$(X_{i,\mathrm{rf}})_{i=1}^{10}
=
(0,0.11,0.22,0.33,0,0.44,0.55,0.66,0.77,0.88)^\top$,

$(X_{i,\mathrm{risky}})_{i=1}^{10}
=
(1.1,0.99,0.88,0.77,0,0.66,0.55,0.44,0.33,0.22)^\top$,

$(X_{i,\mathrm{out}})_{i=1}^{10}=0$.
\\

\addlinespace

Symmetric Stars
&
$(X_{i,\mathrm{rf}})_{i=1}^{10}
=
(1,1,3.3,0,\ldots,0)^\top$,

$(X_{i,\mathrm{risky}})_{i=1}^{10}=0$,

$(X_{i,\mathrm{out}})_{i=1}^{10}
=
(0,\ldots,0,6)^\top$.
\\

\addlinespace

Mirrored Cycles
&
$(X_{i,\mathrm{rf}})_{i=1}^{10}
=
(X_{i,\mathrm{risky}})_{i=1}^{10}=0$,

$(X_{i,\mathrm{out}})_{i=1}^{10}
=
(0,0,0.5,0,0,0.5,0,0,0,0)^\top$.
\\

\bottomrule
\end{tabularx}
\end{table}

For the Symmetric Stars and Mirrored Cycles networks, we set $\mathcal A_{\mathrm{node},i}(s)=\mathcal A_{\mathrm{node}}$ for all banks, thereby allowing insolvent banks to also receive capital injections.
Then, by construction, both networks have an obvious, optimal solution when considering a maximum budget of one. In the Mirrored Cycles network, the full budget has to be equally divided among banks one and four. In the Symmetric Stars network, a system-wide cascade of the network can only be prevented, by either supporting bank one or bank two with the total available budget. 
As the neighborhoods of the critical nodes in both networks are identical, our fair message-passing policies can theoretically solve the MirroredCcycles network optimally, but not the Symmetric Stars network. Hence, these examples distinguish a situation in which effective intervention requires unequal treatment of indistinguishable banks from one in which equal treatment is sufficient.
As the actions are sampled stochastically during training, unfair capital allocations may occur even when the action distributions for different banks are identical. Thus, the policy may realize and even learn to assign high probability to the optimal solution for the Symmetric Stars network during training, although this intervention is not available under deterministic evaluation. Approaches to reduce this mismatch between stochastic action selection during training and deterministic action selection during evaluation, which may complicate the learning process, are left to future work.

\paragraph{Random Network Generation}
The generation process of our random core-periphery networks is similar to that of \cite{AW_15}. We use three core banks for $n=10$ and 50 core banks for $n=200$. Directed edges between banks are sampled indepedently, with probabilities $0.66$, $0.15$, $0.07$, and $0.001$ for core-to-core, core-to-periphery, periphery-to-core, and periphery-to-periphery edges, respectively.

The total amount of interbank liabilities is set equal to the number of modeled banks and is split among the four edge categories. Of the total liabilities, $0.35$ are assigned to liabilities between core banks, $0.16$ to liabilities from core banks to periphery banks, $0.47$ to liabilities from periphery banks to core banks, and $0.02$ to liabilities between periphery banks. These amounts are then equally distributed over the sampled edges in the corresponding category. If a category contains no edges, its amount is omitted.
External liabilities are uniformly sampled as a fraction between $0$ and $0.2$ of the bank's interbank liabilities.

To ensure that all banks are initially solvent, we choose their risk-free and risky asset holdings as 
\begin{align*}
X_{i,\mathrm{rf}}(0) &= (1-\rho)(1+\delta) \max\bigl\{ \ell_i^{\mathrm{ib}}(L(0)) +X_{i,\mathrm{out}}(0) -\chi_i^{\mathrm{ib}}(L(0)),\,0\bigr\},\\
X_{i,\mathrm{risky}}(0) &= \frac{\rho(1+\delta)}{S(0)} \max\bigl\{ \ell_i^{\mathrm{ib}}(L(0)) +X_{i,\mathrm{out}}(0) -\chi_i^{\mathrm{ib}}(L(0)),\,0\bigr\},
\end{align*}
where we choose a buffer factor of $\delta=0.1$ and a risky asset portion of $\rho=0.5$.

\subsubsection{Policies and Training}
\label{subsubsec:model-configurations-training-evaluation}
The default policy uses a shared MPNN encoder with two message-passing layers as in Definition~\ref{def:MPNN-layer}, node embedding dimension $d=8$, $\tanh$ activations, sum aggregation, and sum readout. We compare this policy with architectural variants that vary one component at a time. We change the depth or width of the MPNN encoder, utilize mean readout or aggregation function as mentioned in Proposition~\ref{prop:operatornorm-aggregate} and Remark~\ref{rem:theorem-score} respectively, and separate budget and allocation encoders as illustrated in Figure~\ref{fig:policy-architecture}.
We consider two configurations with separate encoders. One uses two message-passing layers in each encoder and is labeled with $2/2$, whereas $4/1$ refers to the configuration with four layers in the budget encoder and one layer in the allocation encoder.
Additionally, we compare our MPNN architecture with a bidirectional variant of an attention-based encoder as in \cite{GATv2,GAT}. 
To estimate the influence of Algorithm~\ref{alg:projected-reinforce}, we also include a stabilized variant of a REINFORCE algorithm that combines a leave-one-out baseline, standardization of the centered return coefficients, gradient-norm clipping, and Adam \cite{ADAM}.

The precise model configurations differ between the small and large-network experiments and are introduced together with the corresponding result graphs.

\paragraph{Initial States and Training Methodology}
To test different aspects of the architecture, we consider two training regimes. For topology-specific training, policies are trained on one network class. Every trajectory starts from the selected network class and thus, except for Random10 and Random200, the first observed state is fixed, while the subsequent risky-asset paths are stochastic. In case of the randomly generated networks, every trajectory starts with a newly generated graph and balance sheet.

Multi-topology training independently samples the initial state for each trajectory uniformly from eight different network classes: Core-Periphery, Star, Inverse Star, Random10, as well as a circular graph, a chain graph and two bipartite graphs. Symmetric Stars and Mirrored Cycles are excluded from this training distribution. The circular graph, the chain graph and the two bipartite graphs only occur during training and, for brevity, are not considered further in the analysis.

To test generalization across previously unseen network topologies, we evaluate the models trained solely on Random10 on all benchmark networks without further training.

\paragraph{Rollout Schemes and Baseline Setup}
In the fixed benchmark networks, the reward of the regulator heavily depends on its early decisions. Hence, for the small networks, we deviate from Algorithm~\ref{alg:projected-reinforce} by using a fixed horizon of length 50. 

Following the standard policy-gradient likelihood-ratio argument~\cite{policy-gradient-methods}, the objective for a fixed finite horizon of length $H$
$$J_H(\theta)=\E\left[\sum_{t=0}^{H-1}\gamma^t R_t\right]$$
yields
$$\nabla J_H(\theta)=\E\left[\sum_{t=0}^{H-1}\gamma^t\left(\sum_{j=0}^{H-1-t}\gamma^j R_{t+j}\right)\nabla_\theta \log\pi_\theta(A_t \mid S_t)\right].$$
Hence, we use the trajectory-based policy-gradient estimator
$$\widehat g=\sum_{t=0}^{H-1}\gamma^t\left(\sum_{j=0}^{H-1-t}\gamma^j r_{t+j}\right)\nabla_\theta\log\pi_\theta(a_t\mid s_t).$$

Its average over each batch is used in the projected parameter update. This ensures that the gradient contribution associated with the initial decision is included in every trajectory.
Further, as the Symmetric Stars and Mirrored Cycles networks do not depend on any randomness, they are modeled as one time decisions.

For our main experiments on Random200, we use the geometric-rollout training regime as stated in Algorithm~\ref{alg:projected-reinforce}.

To obtain robust results, each model configuration is trained on ten distinct training seeds, which are shared across all configurations. A given seed is used to initialize the parameters, the environment and the random draws used for stochastic action selection during training.

Table~\ref{tab:baseline-parameters} summarizes the baseline numerical setup, including the rollout schemes, training hyperparameters and the evaluation settings.

\begin{table}[t]
\centering\small
\setlength{\tabcolsep}{5pt}
\renewcommand{\arraystretch}{1.08}
\caption{Summary of the baseline numerical setup.}
\label{tab:baseline-parameters}
\begin{tabularx}{0.96\textwidth}{@{}p{0.25\textwidth}X@{}}
\toprule
Component & Setting \\
\midrule
Risky-asset dynamics & Ornstein--Uhlenbeck process with initial price $S(0)=1.0$, long-run mean $1.0$, volatility $0.15$, and mean-reversion speed $0.2$. \\
Action space & Budgets $\mathcal A_{\mathrm{bud}}=\{0,0.1,\ldots,1\}$ for $n=10$ and $\{0,0.5,\ldots,30\}$ for $n=200$. Node weights $\{0,1,2\}$ and $\{0,1,\ldots,6\}$, respectively. \\
Reward & Intervention cost $\kappa=4/3$, discount factor $\gamma=0.9$ and $\lambda =0$ unless stated otherwise.\\
Training budget & $10{,}000{,}000$ formal transitions, batches of size four, and ten training seeds. The specialized one-decision runs use $2{,}000{,}000$ transitions. \\
Rollouts & $H=50$ for ordinary ten-bank training, $H=1$ for specialized one-decision training, geometric sampling for Random200. \\
Step size and projection & $\alpha_k=\alpha_0(0.2)^{k/\bar K}$, where $\bar K$ is the expected number of updates within the training budget. Initially, $\alpha_0=10^{-4}$ for $n=10$, and $\alpha_0=10^{-3}$ for $n=200$. All trainable parameters are projected elementwise onto $[-5,5]$ after each update.\\
Evaluation & Deterministic action selection on 100 common environment seeds, with horizon 100. The one-decision examples use a horizon of one. \\
Default policy & Shared bidirectional MPNN, two layers, node embedding dimension 8, $\tanh$ activations, sum aggregation, and sum readout. \\
\bottomrule
\end{tabularx}
\end{table}

\paragraph{Heuristics and Feedforward Baselines}
We compare the learned policies with three heuristics. The \emph{uniform} heuristic distributes the available capital uniformly among all banks eligible for support. The \emph{zero} heuristic never injects capital. At each time step, the \emph{greedy} heuristic supports one eligible bank, which has the largest positive difference between liabilities and the sum of risk-free assets and interbank claims, with the total available capital. If no bank has such a deficit, it does nothing.

In addition, we compare the graph-based agents with feedforward neural network (FFNN) policies. These FFNNs use flattened observations and two hidden layers, with width 16 for ten banks and width 50 for 200 banks. The ten-bank FFNN was trained and evaluated using a fixed node ordering, while the 200-bank FFNN setup uses random node permutations upon environment reset.
These comparisons allow us to analyze the benefits of graph neural networks, as well as potentiall performancy loss by the restrictions of fairness and transparency.

\subsection{Results}
\label{subsec:systemic-risk-results}
In the performance figures below, each point represents one learned policy's mean discounted return on the common evaluation episodes, while the larger diamonds indicate the median performance across the ten training-seed means. The distinct training regimes, topology-specific and multi-topology, as well as the zero-shot performance of the policies trained only on Random10 are distinguishable by color. For reference, the performance of the heuristics are displayed as vertical lines, if they are inside the displayed range and otherwise as numerical values in the legend. Further, the best performance of any learned policy is also highlighted as a vertical line. 

The large-network Table~\ref{tab:random-200-combined} additionally reports undiscounted realized capital injections and final defaults. Note that the reward penalizes the selected budget level, whereas the tables report the realized capital injection. These differ, if a positive budget decision is combined with zero allocation weights.

\subsubsection{Controlled Experiments on Small Networks}
\label{subsubsec:small-network-results}
Figures~\ref{fig:performance-core10}, \ref{fig:performance-star10}, \ref{fig:performance-invstar10}, and~\ref{fig:performance-r10} give a compact overview of the experiments on the small networks. 

\begin{figure}[!htb]
 \includegraphics[width=\textwidth,height=0.76\textheight,keepaspectratio]{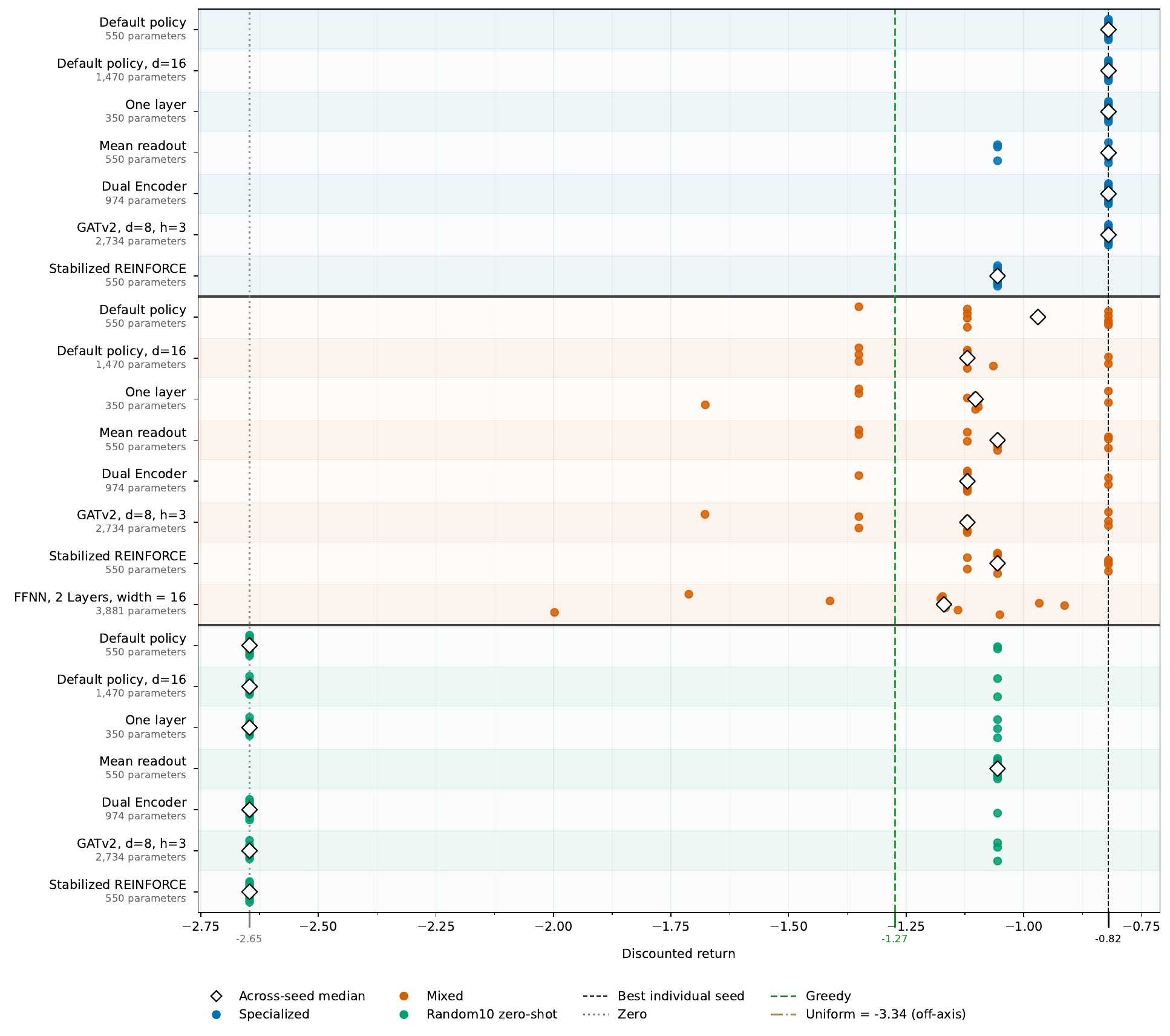}
 \caption{Core-Periphery}
 \label{fig:performance-core10}
\end{figure}

\begin{figure}[!htb]
 \centering
 \includegraphics[width=\textwidth,height=0.76\textheight,keepaspectratio]{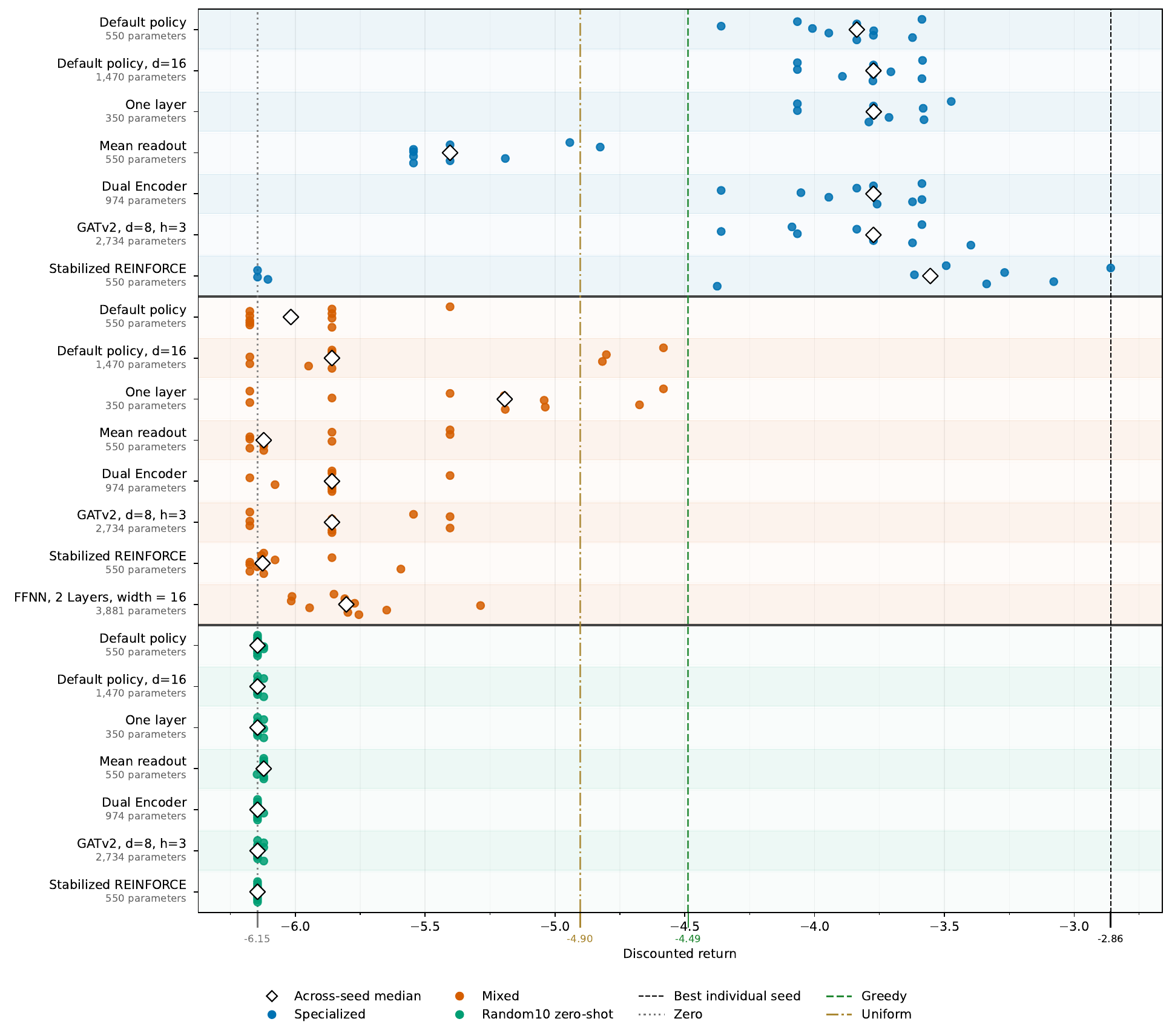}
 \caption{Inverse Star}
 \label{fig:performance-invstar10}
\end{figure}

\begin{figure}[!htb]
 \centering
 \includegraphics[width=\textwidth,height=0.76\textheight,keepaspectratio]{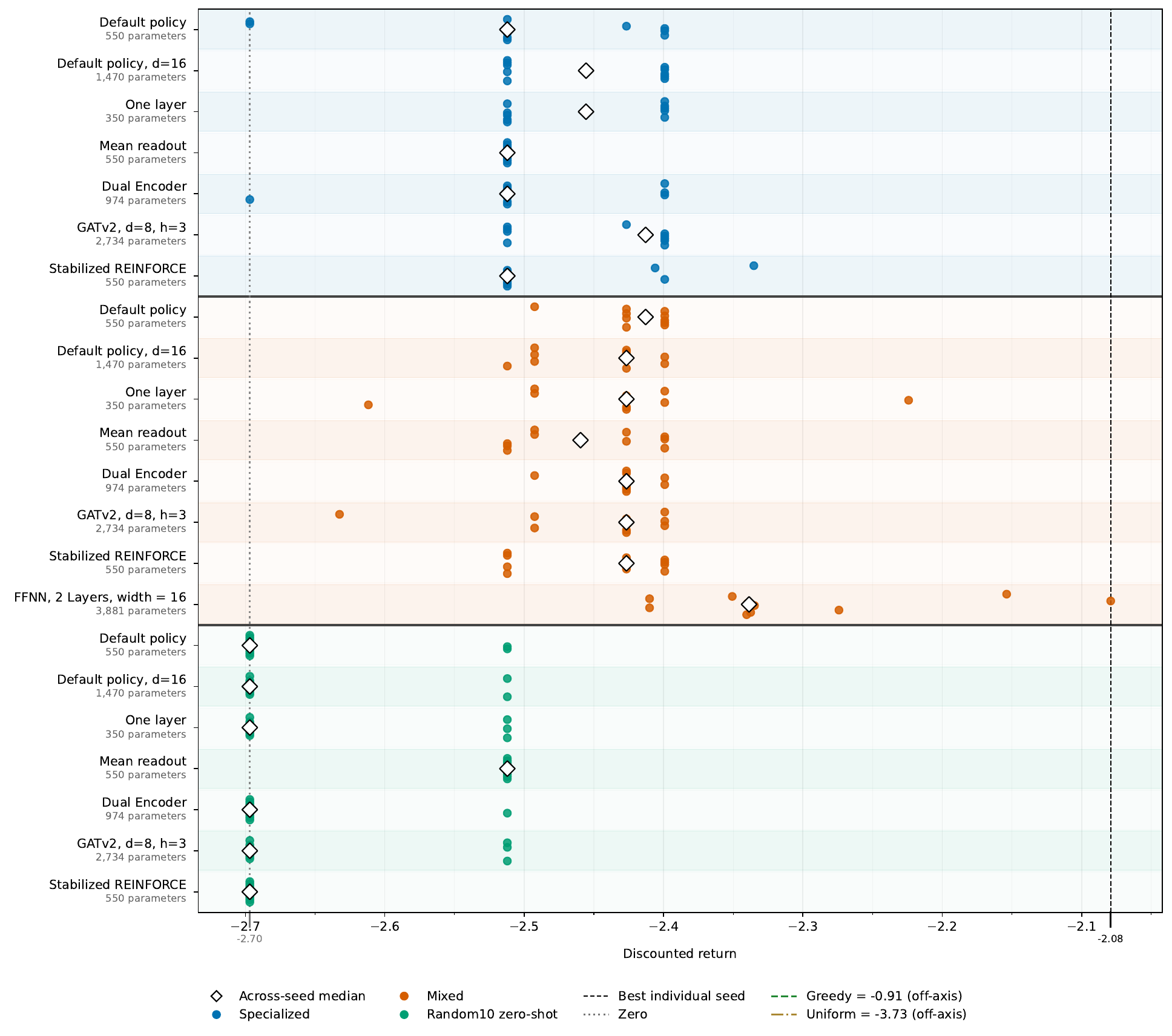}
 \caption{Star}
 \label{fig:performance-star10}
\end{figure}

\begin{figure}[!htb]
 \centering
 \includegraphics[width=\textwidth,height=0.76\textheight,keepaspectratio]{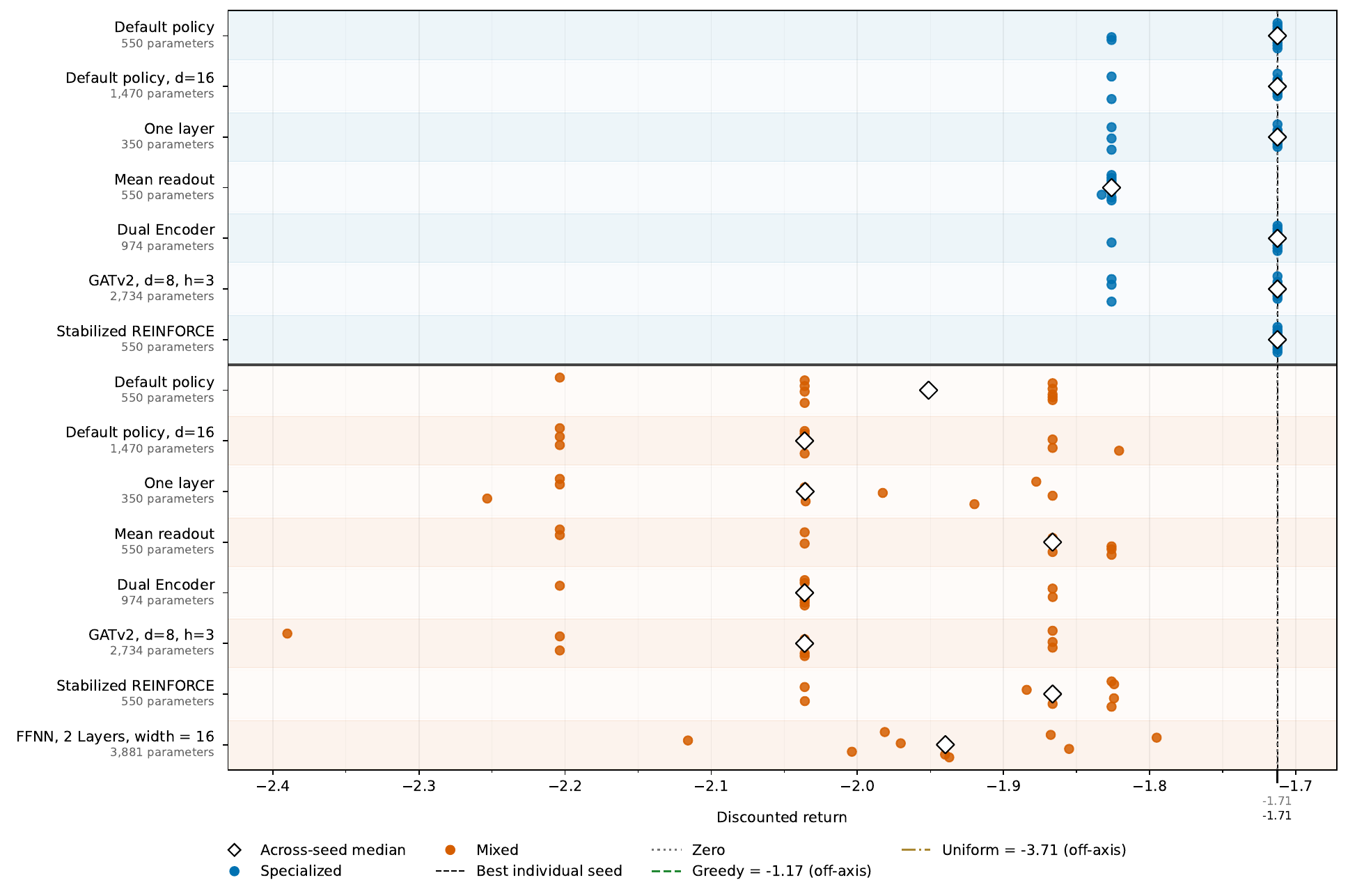}
 \caption{Random10}
 \label{fig:performance-r10}
\end{figure}

\paragraph{Overall Performance}
The heuristic policies provide a first indication of the difficulty of the intervention problems. The zero heuristic performs relatively bad on all small networks, except Random10. This confirms that intervention is generally necessary. 
Further, they highlight where the learned interventions perform better or worse. This is highly dependend on the financial network. 

On networks where multiple banks are essential for stability, the learned policies perform relatively better. On Core-Periphery, every topology-specific configuration exceeds the best heuristic in median return, while on Inverse Star this holds for all topology-specific configurations except mean readout.
The topology-specific default policy achieves median returns of $-0.82$ and $-3.84$ on Core-Periphery and Inverse Star, compared with $-1.27$ and $-4.49$ for greedy, respectively.

In contrast, on the Star network, where there is a clear, single essential bank, even the specialized models are widely outperformed by the greedy heuristic, as the topology-specific default policy achieves a median return of $-2.51$, compared with $-0.91$ for greedy.
The heuristic already identifies the essential bank. Thus, to outperform it, a learned policy must combine the correct targeting with improved choices for the timing and budget of the capital allocations. However, an in-depth look at the allocation factors for each bank reveals that the node head fails to assign a relatively high weight factor to the crucial bank. Under deterministic evaluation, all ten default models assign all banks in the initial state the maximal allocation weight factor. Paired with the small budget decisions, this leads to a poor performance relative to the greedy heuristic. As the learned policies are able to effectively recognize the essential banks in the Symmetric Stars network, this suggests that the difficulty of recognizing the critical bank in the Star network may be due to the well-known poor sample efficiency of the REINFORCE algorithms.

Increasing encoder width or replacing the MPNN by GATv2 does not systematically improve the small-network results. Interestingly, restricting to a single message-passing layer, which leads to more transparent node-wise weight decisions as they only depend on the direct neighborhood of a node, leads to no consistent performance loss. This aspect is discussed in more detail for the larger networks below. 

Although theoretically the mean readout function leads to a smaller upper bound for the global-head contribution to the score regularity, as discussed in Remark~\ref{rem:theorem-score}, this does not seem to consistently improve the small-network performance.

Changing the training regime to include more modern approaches as mentioned above can reduce the variance in performance across different training seeds, although this effect depends on the network. On Random10, only the stabilized REINFORCE version is able to learn to not intervene for all ten training seeds, which is the best learned strategy. However, no learned policy is able to outperform the greedy heuristic. As Random10 instances may vary tremendously, the results suggest that learning useful interventions across varying networks simultaneously is challenging. 
This is supported by the fact that the topology specific models outperform the multi-topology models by a lot. For the default policy, training exclusively on Random10 gives a median return of $-1.71$, compared with $-1.95$ under multi-topology training.
The stabilized REINFORCE variant achieves $-1.71$ on Random10, compared with $-1.17$ for greedy. In contrast, the learned policies perform substantially better relative to the heuristics on the larger Random200 networks, as discussed below. One possible explanation is that the structural properties of the generated networks are more similar across instances.

\paragraph{Training Across Topologies}
Surprisingly, multi-topology training achieves both the highest median return and the best individual performance among all learned policies on the Star network. The fact that topology-specific training does not yield the best performance further indicates the difficulty of learning an effective intervention strategy on this particular network. On the Core-Periphery network, as well as Random10, the performance is worse than for the specialized models, but only by a slight margin. In contrast, the multi topology models perform relatively bad on the Inverse Star network.

Despite its larger parameter count and ability to distinguish banks beyond the fairness restriction, the FFNN does not dominate the graph-based alternatives. This suggests that the structural restrictions of the graph policy are compatible with competitive performance.

In contrast, the transfer results do show that performance on previously unseen small networks is generally poor. For example, the default policy trained on Random10 achieves a median return of $-2.65$ on Core-Periphery, compared with $-0.82$ after topology-specific training. This is especially interesting, as the policies trained on Random10 are used to seeing different network configurations during training and the Core-Periphery network has a similar structure to the randomly generated networks. Mean readout is an exception, achieving a  mean return of $-1.06$ on Core-Periphery for all ten training seeds.

\subsubsection{Fairness in One-Decision Examples}
\label{subsubsec:one-decision-fairness}
\begin{figure}[!htb]
 \centering
 \includegraphics[width=\textwidth,height=0.76\textheight,keepaspectratio]{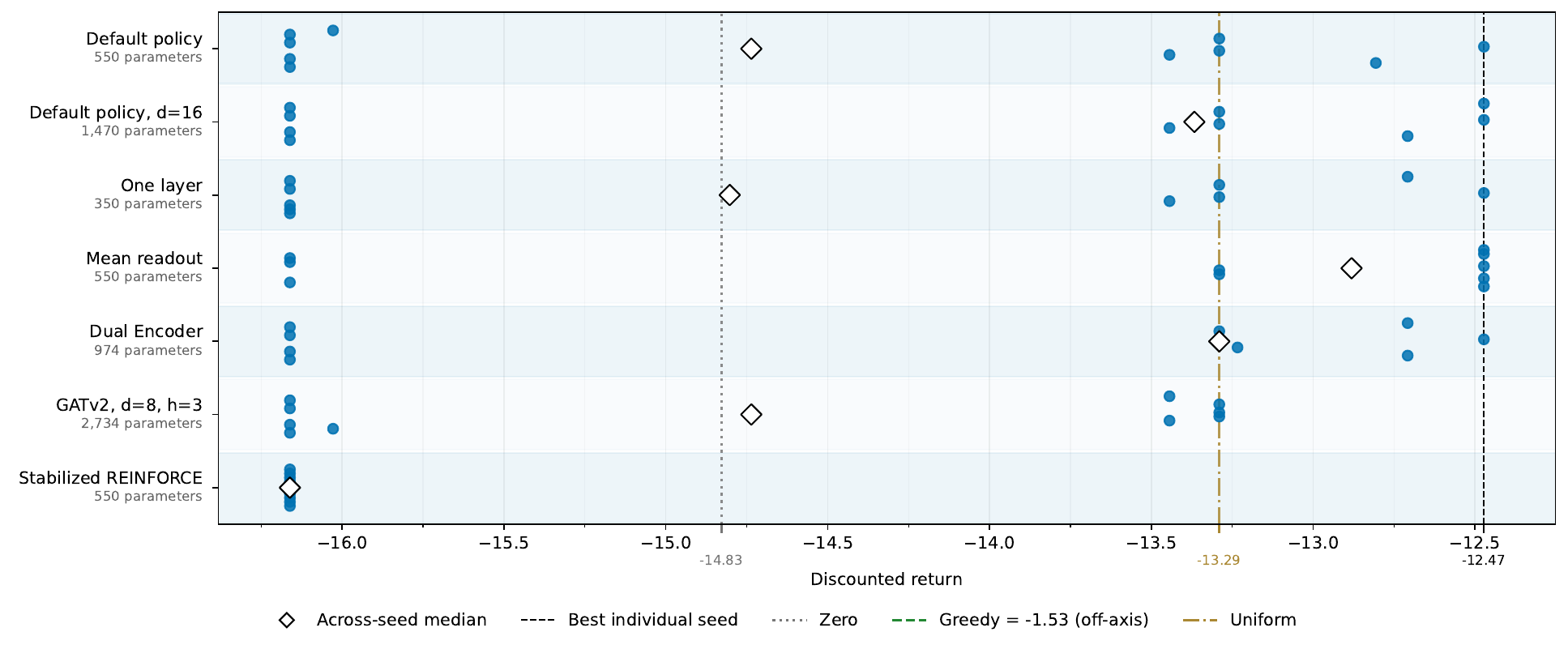}
 \caption{Symmetric Star}
 \label{fig:performance-symstars10}
\end{figure}

\begin{figure}[!htb]
 \centering
 \includegraphics[width=\textwidth,height=0.76\textheight,keepaspectratio]{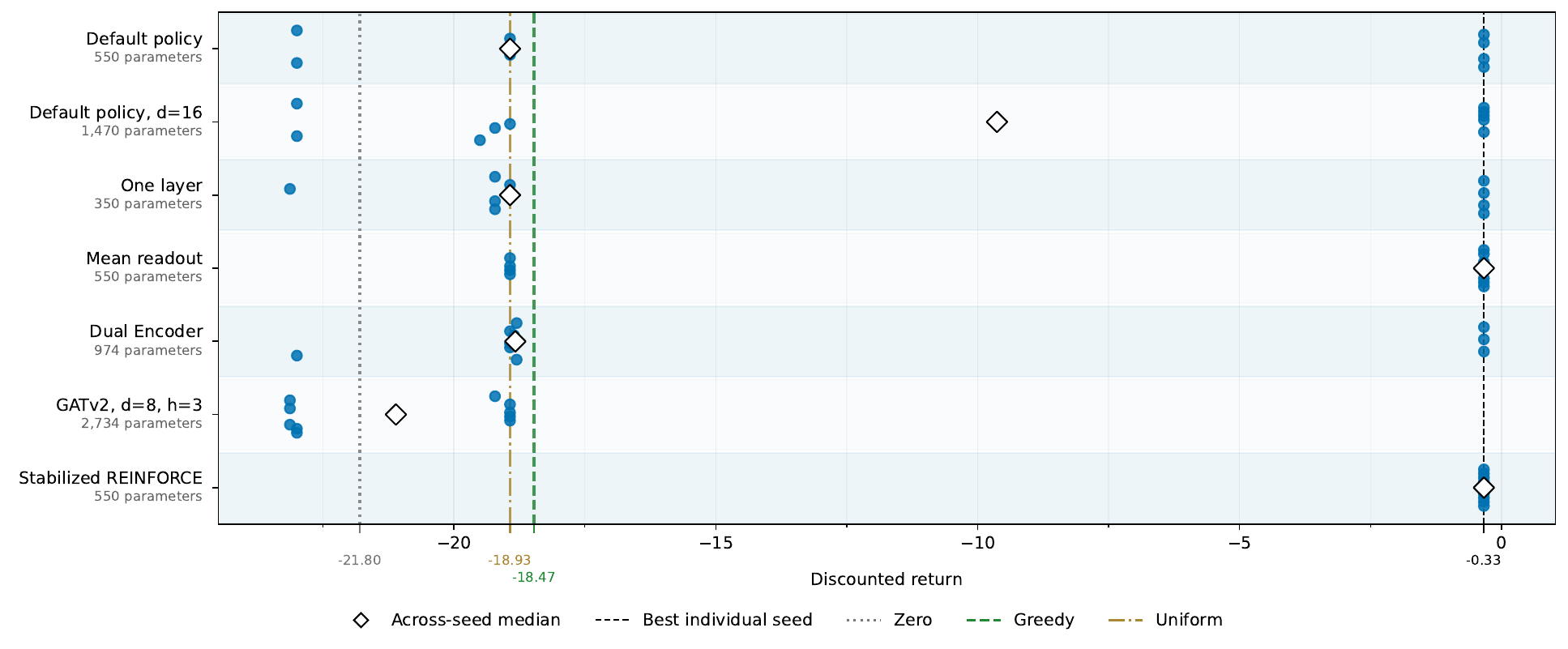}
 \caption{Mirrored Cycles}
 \label{fig:performance-mirrorcycles10}
\end{figure}

Figures~\ref{fig:performance-symstars10} and~\ref{fig:performance-mirrorcycles10} illustrate a precise limitation of fairness. To keep it more compact, the figures only include the topology-specific models. As mentioned above, in Symmetric Stars, the two initial debtors have identical features and symmetric neighborhoods. Hence our MPNN policies, which are restricted by fairness, assigns both of these the same weight  under deterministic action selection, leading to a complete cascade in the network. As the system collapse is inevitable with this limitation, the best performing learned strategies are evenly splitting the maximal available budget between the two banks, leading to a return of $-12.47$. Testing all possible fair allocations in this scenario revealed that the learned strategy is actually the best performing fair action, where fairness refers to equal treatment in the realized node-wise allocation weights, as discussed in Remark~\ref{rem:fairness}.
In contrast, the greedy heuristic finds the optimal solution and fully supports one of the two banks, achieving a return of $-1.53$. Although this strategy performs far better, under our chosen fairness restriction, our regulator shall not be able to perform it.

Stochastic evaluation of the worst performing seeds on the Symmetric Stars network exposes a substantial mismatch between the training policy and its deterministic evaluation. For example, the deterministic return of $-16.16$ of a default policy seed improves to a mean stochastic return of $-8.5$, as independent sampling assigns a positive weight to exactly one of the two relevant banks with probability $48.8\%.$ Meanwhile, the deterministic evaluation leads to no positive allocation weights paired with a positive budget decision. All seeds with a worse performance than the zero-heuristic behave similarly.

The Mirrored Cycles network distinguishes this representational limitation from a learning limitation. The effective intervention divides the budget equally between the two central banks and hence, our proposed architecture is able to represent this strategy. However, this is where one of the main drawbacks of the REINFORCE algorithm becomes visible. Although this algorithm is theoretically well understood and analyzed, it can suffer from high-variance gradient estimates, making useful allocations difficult to learn reliably. Only our stabilized REINFORCE variant learns the optimal strategy successfully in all ten training seeds. Adding the LOO baseline was enough to learn this strategy in all ten training seeds.

\subsubsection{Scaling to Larger Networks}
\label{subsubsec:scaling-larger-networks}
The Random200 experiments show that the MDP formulation and our proposed architecture is flexibel enough, to both allow for a focus on capital usage, as well as contagion losses. 
Table~\ref{tab:random-200-combined} reports the results of various model configurations trained under two differend reward formulations. Considering the one-step reward as in ~\eqref{eq:reward-per-time-step}, we distinguish between the setting with no penalty, $\lambda=0$, as in the small networks and a penalty setting with $\lambda=0.5$. Figures~\ref{fig:performance-r200-p0} and~\ref{fig:performance-r200-p05} additionally show the individual training seeds.

\providecommand{\tenmodelrow}[2]{\makecell[l]{#1\\[-0.25em]{\tiny #2 parameters}}}
\providecommand{\tenheurrow}[1]{\makecell[l]{#1\\[-0.25em]{\tiny Heuristic}}}
\providecommand{\tenreward}[1]{$#1$}
\providecommand{\tenrewardiqr}[3]{\makecell[c]{$#1$\\[-0.25em]{\tiny $[#2,#3]$}}}

\begin{table}[!htb]\centering\scriptsize
\setlength{\tabcolsep}{4pt}\renewcommand{\arraystretch}{1.08}
\caption{Evaluation on Random200.}
\label{tab:random-200-combined}
\begin{adjustbox}{max width=\textwidth,center}
\begin{tabular}{@{}l*{2}{cc}@{}}\toprule
& \multicolumn{2}{c}{$\lambda=0$}
& \multicolumn{2}{c}{$\lambda=0.5$} \\
\cmidrule(lr){2-3}\cmidrule(l){4-5}
Model & Reward & Cap./Def. & Reward & Cap./Def. \\\midrule

\tenmodelrow{Default policy}{1036}
& \tenrewardiqr{-45.87}{-79.97}{-34.25} & $1050.0/10.4$
& \tenrewardiqr{-60.99}{-83.17}{-41.53} & $750.0/16.4$ \\

\tenmodelrow{Four layers}{1436}
& \tenrewardiqr{-45.19}{-62.50}{-41.81} & $1200.0/8.2$
& \tenrewardiqr{-67.65}{-81.40}{-50.14} & $925.0/15.7$ \\

\tenmodelrow{Mean readout}{1036}
& \tenrewardiqr{-47.04}{-50.34}{-42.42} & $1325.0/6.2$
& \tenrewardiqr{-51.91}{-84.51}{-41.80} & $775.0/14.7$ \\

\tenmodelrow{Mean aggregation}{1036}
& \tenrewardiqr{-52.86}{-81.71}{-34.25} & $1525.0/4.9$
& \tenrewardiqr{-54.90}{-79.69}{-47.82} & $1100.0/12.0$ \\

\tenmodelrow{Dual Encoder 2\arraybackslash2}{1460}
& \tenrewardiqr{-64.50}{-74.34}{-45.76} & $1300.0/6.9$
& \tenrewardiqr{-68.11}{-102.13}{-61.95} & $1300.0/6.9$ \\

\tenmodelrow{Dual Encoder 4\arraybackslash1}{1660}
& \tenrewardiqr{-37.50}{-58.86}{-32.25} & $900.0/12.4$
& \tenrewardiqr{-47.01}{-64.12}{-43.28} & $1025.0/10.6$ \\

\tenmodelrow{GATv2, $d=4$, $h=2$}{1012}
& \tenrewardiqr{-73.13}{-89.28}{-48.01} & $1400.0/5.8$
& \tenrewardiqr{-63.59}{-82.74}{-60.84} & $1100.0/9.9$ \\

\tenmodelrow{Default + LOO + std. + clip + Adam}{1036}
& \tenrewardiqr{-32.69}{-43.40}{-31.37} & $325.0/46.5$
& \tenrewardiqr{-42.55}{-72.10}{-39.54} & $391.8/16.7$ \\

\tenmodelrow{FFNN, 2 layers, width 50}{2157161}
& \tenrewardiqr{-57.53}{-66.22}{-51.12} & $925.0/59.2$
& \tenrewardiqr{-86.99}{-117.52}{-79.57} & $1300.0/49.1$ \\

\midrule

\tenheurrow{Zero}
& $-56.67$ & $0.0/184.5$
& $-97.29$ & $0.0/184.5$ \\

\tenheurrow{Uniform}
& $-101.65$ & $3000.0/0.0$
& $-101.65$ & $3000.0/0.0$ \\

\tenheurrow{Greedy}
& $-250.48$ & $472.2/145.4$
& $-286.37$ & $472.2/145.4$ \\

\bottomrule
\end{tabular}
\end{adjustbox}

\par\vspace{0.35em}
\parbox{\textwidth}{\scriptsize
For learned policies, returns are reported as median $[Q_1,Q_3]$ across the ten training-seed mean returns. Cap./Def. denotes the median seed-wise mean total realized capital injection / median seed-wise mean final defaults. For the heuristics the reported values are means across the 100 evaluation episodes.
}
\end{table}

\begin{figure}[!htb]
 \centering
 \includegraphics[width=\textwidth,height=0.76\textheight,keepaspectratio]{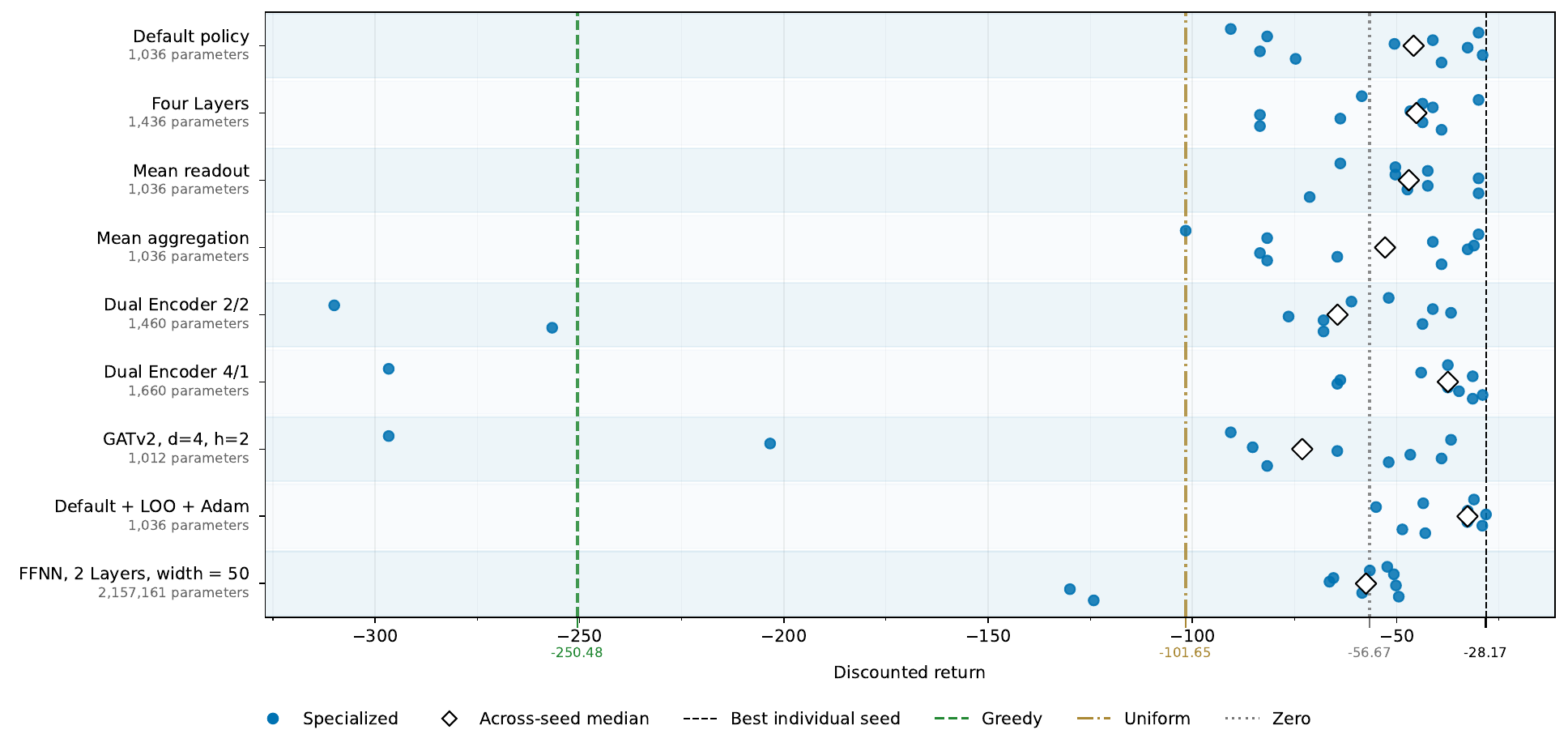}
 \caption{Random200 with $\lambda=0$.}
 \label{fig:performance-r200-p0}
\end{figure}

\begin{figure}[!htb]
 \centering
 \includegraphics[width=\textwidth,height=0.76\textheight,keepaspectratio]{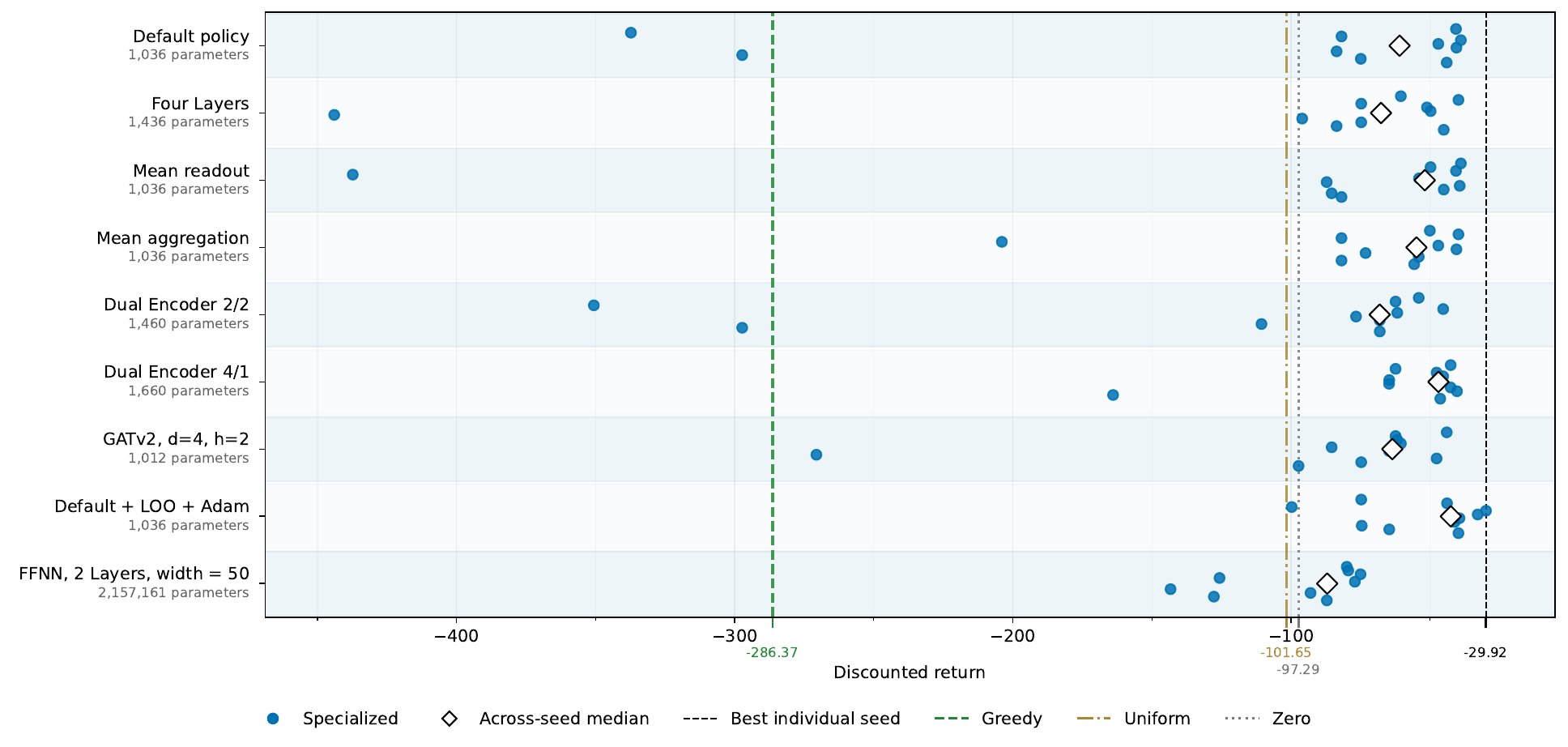}
 \caption{Random200 with $\lambda=0.5$.}
 \label{fig:performance-r200-p05}
\end{figure}      

\paragraph{Architecture and Training Reliability} 
Compared with Random10, the learned policies on Random200 perform better relative to the heuristics. This may be because, for one, Random200 instances are more similar in their structural properties across realizations than Random10 instances and, secondly, because Random200 networks are more interdependent, leading to a worse performance of the heuristics. As shown in Table~\ref{tab:random-200-combined}, blindly spending all available capital, as the uniform heuristic does, almost eliminates all defaults, while no intervention leads to an average of $184.5$ insolvent banks per episode.

Nonetheless, the zero-intervention heuristic performs best among the three heuristics in both settings. For $\lambda=0$, its return is $-56.67$, while with the added penalty of $\lambda=0.5$ its return decreases to $-97.29$. As the uniform heuristic prevents almost all defaults, its return is $-101.65$ in both settings.

Regardless of whether a penalty term is added, all policy configurations contain training seeds that outperform the best heuristic. However, only for $\lambda=0.5$ does the median return of every policy configuration exceed the return of the zero-intervention heuristic.

Simultaneously, several configurations contain individual training seeds that perform worse than all three heuristics. In these cases, the policy fails to effectively coordinate its two policy heads. 
While the budget head selects a positive intervention budget, the node head assigns zero allocation weight to all banks. Thus, no capital is injected, while the selected budget is penalized. Further training alone does not resolve this behavior, as the policy-gradient estimates toward the end of training are close to zero. Hence the corresponding policies appear to have entered a saturated regime.

The substantially larger parameter set of the FFNN and its ability to represent allocations that are excluded by our fairness restriction do not translate into better performance. In contrast, its best individual training seed achieves returns of $-49.51$ and $-75.01$ for $\lambda=0$ and $\lambda=0.5$, respectively, compared with the best MPNN-based returns of $-28.17$ and $-29.92$. At the same time, the FFNN is less affected by extreme training failures than several of the MPNN configurations. Its interquartile ranges are $15.10$ and $37.95$, and for $\lambda=0.5$ its full range across training seeds is only $68.29$, the smallest among all reported configurations.

There is also no substantial top-performance benefit from using a deeper encoder. The four-layer model reaches best-seed returns of $-29.98$ and $-39.82$, compared with $-28.99$ and $-38.94$ for the default policy. Its interquartile range is smaller in both settings, decreasing from $45.72$ to $20.69$ for $\lambda=0$ and from $41.64$ to $31.26$ for $\lambda=0.5$. Nonetheless, the four-layer configuration contains the worst performing training seed of all configurations in the $\lambda=0.5$ setting.

Although using a mean readout function or mean aggregation function reduces the Lipschitz bounds of our architecture, as seen in Remark~\ref{rem:theorem-score} and Proposition~\ref{prop:operatornorm-aggregate}, and therefore improves the constants in the finite-iteration guarantee of Theorem~\ref{theo:finite-time-constrained-stationarity}, this does not consistently lead to better empirical performance. Without a penalty, the median returns of both configurations are worse than that of the default model, while for $\lambda=0.5$, both achieve a higher median return.

The configurations using separate encoders emphasize the importance of the global budget decision. The routed configuration with four layers for the budget head and one layer for the node head achieves median returns of $-37.50$ and $-47.01$, compared with $-64.50$ and $-68.11$ for the configuration with two layers each.
As the corresponding median returns of the default policy are $-45.87$ and $-60.99$ respectively, restricting the node-wise decision to a single message-passing layer is compatible with strong performance in both settings.

The added flexibility of attention-based models does not translate into better performance. While the interquartile ranges of $41.27$ and $21.89$ are slightly smaller than the corresponding values of $45.72$ and $41.64$ for the default policy, the performance of the best individual seeds remain well below those of the default policy, with returns of $-36.73$ versus $-28.99$ for $\lambda=0$ and $-44.03$ versus $-38.94$ for $\lambda=0.5$.

The stabilized REINFORCE version achieves the highest median return among all reported model configurations in both reward settings. Its median returns are $-32.69$ and $-42.55$ for $\lambda=0$ and $\lambda=0.5$, respectively. It also avoids the extreme training failures observed for several other MPNN configurations: the ranges between its worst and best training seeds are $26.90$ and $69.83$, the smallest among all reported MPNN configurations in the respective settings. 

Moreover, the learned intervention behavior displays more efficiency in the budget distribution. Without a penalty, the median realized capital injection decreases from $1050.0$ to $325.0$, while the median number of final defaults increases from $10.4$ to $46.5$. For $\lambda=0.5$, the realized capital injection decreases from $750.0$ to $391.8$, while the median number of final defaults remains almost unchanged at $16.7$ compared with $16.4$ for the default policy.

A more in-depth analysis of the learned policies shows that these differences are largely driven by the node head. Policies trained with standard REINFORCE tend to learn approximately uniform allocation weights across solvent banks. Their performance is therefore driven primarily by the global budget decision rather than by institution-specific targeting, while the stabilized REINFORCE variant is able to learn more heterogeneous allocations. Preliminary experiments with PPO \cite{PPO} further support this observation, as PPO consistently learns heterogeneous node-level allocations. Hence, the ability to exploit the node-wise component of the architecture appears to depend strongly on the training algorithm.

\subsubsection{Scope of the Numerical Evidence}
\label{subsubsec:numerical-scope}
The experiments show that the proposed policy architecture is able to successfully learn useful financial interventions, while being restricted to local information for the node-wise allocation weights and to equal treatment of nodes with identical neighborhoods. In particular, the Random200 experiments show that strong performance is possible even when the node-wise weights depend only on the direct neighborhood of a bank. At the same time, the learned policies indicate that much of the performance in this setting can already be achieved through the global budget decision, while learning targeted node-wise allocations appears to be considerably more challenging.

Overall, these experiments remain a proof of concept. The small-network experiments show that the architecture is able to identify essential nodes, but that learning the corresponding intervention strategy can require substantial training. The Random200 experiments similarly indicate that standard REINFORCE does not fully exploit the node-wise allocation mechanism, whereas the stabilized variant and preliminary PPO experiments demonstrate that more targeted allocations can be learned and are beneficial.
The variation across seeds and performance difference compared to the stabilized REINFORCE algorithm indicate that there is room for improvement on the training side. A more systematic comparison of modern training algorithms, including actor-critic approaches such as PPO, as well as an analysis of the stochastic--deterministic evaluation gap and its relation to targeted intervention strategies, is left for future work.

\section*{Acknowledgments}
Computational support from the Zentrum für Informatoins- und Medientechnologie (ZIM) at Heinrich Heine University is gratefully acknowledged. We thank seminar participants at the Financial Mathematics Seminar
at UCLA, the CFMAR Seminar at UCSB, the Control and Optimization
Seminar at the University of Connecticut, and the Financial Math
Seminar at Worcester Polytechnic Institute, as well as participants
in the 2026 SIAM Conference on Optimization, the Swissquote Workshop on Graph Learning in Financial Networks and the 2026 Bachelier
Finance Society World Congress, for their helpful comments and
feedback.
\FloatBarrier

\bibliographystyle{plain}
\bibliography{references}

\section{Appendix}
\label{app:appendix}

This appendix collects the technical material required for the robustness results in Section~\ref{sec:robustness-convergence}. We first rewrite the message-passing architecture in matrix form, then give a formal representation of the policy heads, and finally record the derivative identities used in the proofs. Afterwards, we prove the aggregation bounds, the layer-wise Lipschitz estimates, the inductive MPNN bounds, the score function bounds, and the convergence results.

\subsection{Formal architecture and matrix representation}
\label{app:architecture-matrix}

We now formalize the definition of the policy and introduce additional notation. Since our Lipschitz-constant results require an explicit linear representation of the message-passing updates, we express each MPNN layer in matrix form. 

\subsubsection{Matrix representation of the MPNN}
\label{app:mpnn-matrix}

Throughout this subsection, we distinguish between an MDP state $s\in\mathcal S$ and the graph input used by the MPNN. For a state $s=(X,E)\in\mathcal S$, the graph input is given by the weighted adjacency matrix $E$ and the initial node features
$$x^{(0)}:=
\begin{pmatrix}
X_1\\
\vdots\\
X_n
\end{pmatrix}
\in \R^{nd_0}.$$
When a single graph input is fixed, we write $E$ for the weighted adjacency matrix and denote its entries by $e_{ij}=E_{ij}$.

Let $k\geq 1$. Then, for a fixed weighted adjacency matrix $E$, the message in the $k$-th layer is given by
\begin{align*}
v_i^{(k)} &:= \begin{pmatrix} x_i^{(k-1)}\\
\bigoplus_{j \in \mathcal{N}^+(i)} e_{j,i}x_j^{(k-1)}\\
\bigoplus_{j \in \mathcal{N}^-(i)} e_{i,j}x_j^{(k-1)}
\end{pmatrix} \in \R^{3d_{k-1}}.
\end{align*}
Next, we apply an affine map followed by an activation function to obtain the new node features.
\begin{align*}
z_i^{(k)} &:= W^{(k)} v_i^{(k)} + b^{(k)}, \qquad
x_i^{(k)}:= \sigma \odot z_i^{(k)},
\end{align*}
where $W^{(k)} \in \mathbb{R}^{d_{k}\times 3d_{k-1}}$ and $b^{(k)} \in \mathbb{R}^{d_{k}}$.

Since the message functions and aggregations are linear in the node features, the update of a node can be written in linear form. For $i,j \in \{1,\ldots,n\}$, we therefore define matrices ${A^{(k)}_{\operatorname{mean}}}_{i,j}(E)$ such that
\begin{align*}
v_i^{(k)} = \sum_{j=1}^n {A^{(k)}_{\operatorname{mean}}}_{i,j}(E) x_j^{(k-1)}.
\end{align*}
They are given by
\begin{align*}
{A^{(k)}_{\operatorname{mean}}}_{i,j}(E) = \begin{cases}
\begin{pmatrix}
I_{d_{k-1}}\\
0_{2d_{k-1} \times d_{k-1}}
\end{pmatrix}, & \text{if } j = i, \\[1.5em]
\begin{pmatrix}
0_{d_{k-1}\times d_{k-1}}\\
\1_{|\mathcal{N}^+(i)|>0}\frac{e_{j,i}}{|\mathcal{N}^+(i)|}\, I_{d_{k-1}}\\
\1_{|\mathcal{N}^-(i)|>0}\frac{e_{i,j}}{|\mathcal{N}^-(i)|}\, I_{d_{k-1}}
\end{pmatrix}, & \text{if } j\neq i.
\end{cases}
\end{align*}
These matrices correspond to choosing $\operatorname{mean}$ as aggregation function. When we choose $\operatorname{sum}$, the matrix changes to
\begin{align*}
{A^{(k)}_{\sum}}_{i,j}(E) = \begin{cases}
\begin{pmatrix}
I_{d_{k-1}}\\
0_{2d_{k-1} \times d_{k-1}}
\end{pmatrix}, & \text{if } j = i, \\[1.5em]
\begin{pmatrix}
0_{d_{k-1}\times d_{k-1}}\\
e_{j,i} I_{d_{k-1}}\\
e_{i,j} I_{d_{k-1}}
\end{pmatrix}, & \text{otherwise.}
\end{cases}
\end{align*}
When we do not specify which aggregation function is used, we write $A^{(k)}_{i,j}(E)$. In both cases, the matrix describes the influence of node $j$ on the update of node $i$ and depends only on the weighted adjacency matrix $E$. In every layer, the same matrix is applied to each feature coordinate.
This approach could be extended to more general linear message functions. More precisely, instead of messages of the form $e_{ij}x_j^{(k-1)}$, one may consider messages of the form
$$M_{i,j}(e_{ij})\, x_j^{(k-1)},$$
where $M_{i,j}(e_{ij})$ is a linear map that may depend on $i$, $j$, and $e_{ij}$. In that case, the matrices $A^{(k)}_{i,j}(E)$ are obtained by replacing the two lower blocks by $M_{i,j}(e_{ij})$.

We denote by $x^{(k)}$ the concatenation of every node's features into one vector, i.e.
\begin{align*}
x^{(k)} := \begin{pmatrix} x_1^{(k)} \\ \vdots \\ x_n^{(k)}\end{pmatrix} \in \R^{nd_{k}},
\end{align*}
and similarly define $v^{(k)}$ and $z^{(k)}$.
Denoting the weights by $\varphi^{(k)} \in \R^{d_{k}(3d_{k-1}+1)}$, layer $k$ can be represented by the update function
\begin{align*}
U^{(k)}: \R^{d_{k}(3d_{k-1}+1)} \times \R^{nd_{k-1}} \times \R^{n\times n} \to \R^{nd_{k}},
\end{align*}
given by
\begin{align*}
U^{(k)}(\varphi^{(k)}, x^{(k-1)}, E)
:= \left(I_n \otimes W^{(k)} \right) A^{(k)}(E) x^{(k-1)} + \mathds{1}_n \otimes b^{(k)},
\end{align*}
where $A^{(k)}(E)$ is the block matrix
\begin{align*}
A^{(k)}(E) :=
\begin{pmatrix}
A^{(k)}_{1,1}(E) & \cdots & A^{(k)}_{1,n}(E)\\
\vdots & \ddots & \vdots\\
A^{(k)}_{n,1}(E) & \cdots & A^{(k)}_{n,n}(E)
\end{pmatrix}.
\end{align*}
The update in the first layer differs only in that $x^{(0)}(s)\in\R^{nd_0}$ is part of the given state. Therefore, the dimensions of the matrices have to be adjusted accordingly.

In the following, we denote all parameters up to the $k$-th layer by $\theta^k=(\varphi^{(k)},\ldots,\varphi^{(1)})$ and denote the activated output of a layer by $T^{(k)} := \sigma \odot U^{(k)}$. Then the whole MPNN can be understood iteratively as the function
\begin{align*}
f^K: \theta^K \times \R^{n d_0} \times \R^{n \times n} \to \R^{nd_{K}},\quad
(\varphi^{(K)},\ldots, \varphi^{(1)}, x^{(0)}, E)
\mapsto T^{(K)}(\varphi^{(K)}, f^{K-1}(\theta^{K-1},x^{(0)},E), E),
\end{align*}
where $f^{K-1}$ denotes the MPNN up to the $(K-1)$-th layer and $f^0(\theta^0,x^{(0)},E):=x^{(0)}$.

\subsubsection{Policy heads}
\label{app:policy-heads}
Following Definition~\ref{def:heads}, we now give a formal representation of the global and local policy heads.

\noindent\textbf{Global head}: For the global head in Definition~\ref{def:heads}, we first consider $\operatorname{sum}$ as the readout function. Thus, we first sum the node embeddings:
$$\mathbf{x} := \read x := \sum_{i=1}^n x_i^{(K)} \in \R^{d_{K}},$$
where $\read \in \R^{d_{K} \times n d_{K}}$ is the summation matrix
\begin{align*}
\read = \left[ I_{{d_{K}}}  I_{d_{K}} \ldots I_{d_{K}}  \right] = \mathds{1}_n^\top \otimes I_{d_{K}} \in \R^{{d_{K}} \times n{d_{K}}}.
\end{align*}
The global representation is then mapped to $K_G:=|\mathcal A_{\mathrm{glob}}|$ logits by an affine map with parameters $W^G \in \R^{K_G \times {d_{K}}}$ and $b^G \in \R^{K_G}$.
$$z^G = W^G \mathbf{x} + b^G.$$
After fixing an ordering of the finite action set $\mathcal A_{\mathrm{glob}}$, the softmax function yields a probability distribution on $\mathcal A_{\mathrm{glob}}$.
$$\operatorname{softmax}: \R^{K_G} \to \mathcal{D}^{K_G}, \qquad
z \mapsto \left(\frac{\exp(z_i)}{\sum_{k=1}^{K_G} \exp(z_k)}\right)_{i=1,\ldots,K_G},$$
where $\mathcal{D}^{K_G} := \{x \in \R^{K_G} \mid x \geq 0, \sum_{i=1}^{K_G} x_i = 1\}$.
As before, we understand the weights $\tau^G = (W^G,b^G)$ as a vector and denote the affine transformation by
\begin{align*}
U^{G}: \R^{K_G({d_{K}}+1)} \times \R^{n {d_{K}}} \to \R^{K_G}, \qquad
(\tau^G, x^{(K)}) \mapsto W^G \read x^{(K)} + b^{G}.
\end{align*}
Hence, writing $\pi_\theta^G=\pi_\theta^{\mathrm{glob}}$, the global head is the function
$$\pi^G: \R^{K_G({d_{K}}+1)} \times \R^{n {d_{K}}} \times \mathcal S \times \mathcal A_{\mathrm{glob}} \to [0,1],\qquad
\pi_\theta^G(a_{\mathrm{glob}} \mid s)
=(\operatorname{softmax}\circ U^G(\tau^G,x^{(K)}))_{a_{\mathrm{glob}}}.$$

\noindent\textbf{Local heads}: The architecture of the local head differs slightly. For each node $i$, a vector of logits is computed from the local embedding $x_i$.
$$z^{N_i} = W^N x_i^{(K)} + b^N = W^N P_i x^{(K)} + b^N,$$
where $P_i \in \R^{{d_{K}} \times n {d_{K}}}$ is the projection matrix for the $i$-th node,
$$P_i =  \left[0_{d_{K}} \ldots 0_{d_{K}} I_{d_{K}} 0_{d_{K}} \ldots\right] \in \R^{{d_{K}} \times n{d_{K}}},$$
with the identity matrix in the $i$-th block.

Before applying softmax in the local head, we apply masking to ensure that the resulting distribution is supported on the admissible node-level action set $\mathcal A_{\mathrm{node},i}(s)$. The local head first produces logits for the full finite set $\mathcal A_{\mathrm{node}}$ and then masks inadmissible actions. For the theoretical analysis, this is done by setting the corresponding logits to $-\infty$ and using the convention $\exp(-\infty)=0$. Since $a_{\mathrm{node}}^0\in\mathcal A_{\mathrm{node},i}(s)$ for all states and nodes, the denominator of the softmax remains strictly positive.

Formally, after fixing an ordering of $\mathcal A_{\mathrm{node}}$, masking for node $i$ is the function
\begin{align*}
\operatorname{mask}^i_s&:\R^{K_{\mathrm{node}}}\to\left(\R\cup\{-\infty\}\right)^{K_{\mathrm{node}}},\\
\left(\operatorname{mask}^i_s(z)\right)_r &=
\begin{cases}
z_r, & \text{if the $r$-th node action belongs to } \mathcal A_{\mathrm{node},i}(s),\\
-\infty, & \text{otherwise.}
\end{cases}
\end{align*}
In numerical implementations, this masking is approximated by replacing $-\infty$ with a large negative value $-M$. We stress that for fixed $s$, the masked softmax is differentiable as a function of the logits and refer to \cite{analysis_masking} for an analysis of the influence of masks on the policy and its gradient.

Defining $U^N$ and $\tau^N$ analogously to the global head, the application of the shared local head to node $i$ can be represented by
\begin{align*}
\pi_\theta^i:
\R^{K_\mathrm{node}({d_{K}}+1)} \times \R^{n{d_{K}}} \times \mathcal S \times \mathcal A_{\mathrm{node}} \to [0,1],
\quad \pi_\theta^i(a_i \mid s)
= (\operatorname{softmax}\circ \operatorname{mask}^i_s\circ U^N(\tau^N,x^{(K)}))_{a_i}.
\end{align*}
For a given state $s$, its output is a probability distribution on $\mathcal A_{\mathrm{node},i}(s)$.

With a slight abuse of terminology, we refer to $\pi_\theta^i$ as the $i$-th local head. 
As all local heads share the weights $\tau^N$, our architecture is permutation equivariant. The complete policy is given by~\eqref{eq:policy-def}.

\subsubsection{Derivative identities}
\label{app:derivative-identities}
To prove our main results from Section~\ref{sec:robustness-convergence}, we first compute derivatives of a single MPNN layer as well as derivatives of the heads.
For a given weight matrix $W \in \R^{m \times n}$ and a bias vector $b \in \R^m$, we identify the parameter space with a vectorized subset of $\R^{m(n+1)}$, where parameters are ordered row-wise, i.e.,
$$W_{1,1}, W_{1,2},\ldots,b_1,W_{2,1},\ldots, b_m.$$

Furthermore, we work with Jacobian matrices and denote the partial derivative with respect to the vectorized weight matrix $W$ by $D_W$. The same convention applies to $D_b$.
In the following, we fix a state-action pair $(s,a) \in \mathcal S\times \mathcal A(s)$. Hence, we neglect the masking of actions, as these effectively only changes the size of the action space, which has no influence on our bounds. Similarly, all constants with respect to changes in the inputs consider sets with fixed masks and thus we also neglect the  masks in that case.

\noindent\textbf{Derivative of the global head}: By construction, for a given graph-level action $a_{\mathrm{glob}}$ and state $s$, we have
$$\log \pi_\theta^G(a_{\mathrm{glob}} \mid s)
= z^G_{a_{\mathrm{glob}}}-\log\!\Biggl(\sum_{k=1}^{K_G} \exp\big(z^G_k\big)\Biggr).$$
The derivative with respect to the input $x$ is given by
\begin{align}
D_x \log\pi_\theta^G (a_{\mathrm{glob}} \mid s)
=&\left( \frac{\partial \log \pi_\theta^G(a_{\mathrm{glob}}\mid s)}{\partial z^G} \right)
\left( \frac{\partial z^G}{\partial x} \right) \nonumber\\
=&
\left(e_{a_{\mathrm{glob}}} - \pi_\theta^G(\cdot \mid s)\right)^\top W^G\read,
\label{eq:graph-head-input}
\end{align}
where $e_{a_{\mathrm{glob}}}$ is the unit vector corresponding to the chosen class and $\pi_\theta^G(\cdot \mid s)$ is the vector of probabilities over all graph-level actions.
As
\begin{align*}
\frac{\partial U^G(\tau^G,x)}{\partial W^G} =  I_{K_G} \otimes (\read{x})^\top \quad \text{and} \quad \frac{\partial U^G(\tau^G,x)}{\partial b^G} = I_{K_G},
\end{align*}
the derivative of the global head with respect to its parameters is given by
\begin{align}
D_{\tau^G} \log\pi_\theta^G (a_{\mathrm{glob}} \mid s)
=&\left(\frac{\partial\log \pi_\theta^G(a_{\mathrm{glob}} \mid s)}{\partial z^G}\right)
\left(\frac{\partial z^G}{\partial \tau^G}\right) \nonumber\\
=&\left(e_{a_{\mathrm{glob}}} - \pi_\theta^G(\cdot \mid s)\right)^\top
\left(I_{K_G} \otimes [(\read{x})^\top\,1]\right),
\label{eq:graph-head-parameter}
\end{align}
where the parameters are ordered as
$$W^G_{1,1},\ldots, W^G_{1,d_K}, b^G_1, W^G_{2,1},\ldots,b^G_{K_G}.$$

\noindent\textbf{Derivative of the local heads}: Without loss of generality, we consider $i=1$.

Analogously to the global head,
$$\log\pi_\theta^1(a_1\mid s) = z^{N_1}_{a_1} - \log\left( \sum_{k=1}^{K_\text{node}}\exp\left(z^{N_1}_k\right)\right),$$
and the derivative with respect to its input is given by
\begin{align}
D_x\log \pi_\theta^1({a_1} \mid s)
=&\left(\frac{\partial\log \pi_\theta^1(a_1 \mid s)}{\partial z^{N_1}} \right)\left(\frac{\partial z^{N_1}}{\partial x}\right) \nonumber\\
=&\left(e_{a_1} - \pi_\theta^1(\cdot \mid s)\right)^\top W^N P_1.
\label{eq:node-input}
\end{align}
The derivative with respect to its parameters is given by
\begin{align}
D_{\tau^N} \log\pi_\theta^1 (a_1 \mid s)
=&\left(\frac{\partial\log \pi_\theta^1(a_1 \mid s)}{\partial z^{N_1}}\right)
\left(\frac{\partial z^{N_1}}{\partial \tau^N}\right) \nonumber\\
=&\left(e_{a_1} - \pi_\theta^1(\cdot \mid s)\right)^\top
\left(I_{K_\mathrm{node}} \otimes \left[(P_1{x})^\top\,1\right]\right).
\label{eq:node-parameter}
\end{align}

Combining the graph-level and node-level identities with the gated policy factorization in \eqref{eq:policy-def} gives
$$\nabla_\theta\log\pi_\theta(a\mid s)
=\nabla_\theta\log\pi_\theta^G(a_{\mathrm{glob}}\mid s)+\eta(a_{\mathrm{glob}})\sum_{i=1}^n\nabla_\theta\log\pi_\theta^i(a_i\mid s).$$
Thus, whenever $\eta(a_{\mathrm{glob}})=0$, the node-level derivative contributions vanish.

\noindent\textbf{Derivative of one MPNN layer}: We now derive, for a given layer $k\in \{1,\ldots,K\}$, the derivatives with respect to the input and the parameters. In these derivatives, the weighted adjacency matrix $E$ is fixed.
\begin{align}
D_x T^{(k)}(\varphi^{(k)},x,E)
=&D_x \left(\sigma \odot U^{(k)}(\varphi^{(k)},x,E)\right) \nonumber\\
=&S^{(k)} D_x U^{(k)}(\varphi^{(k)},x,E) \nonumber\\
=&S^{(k)} \left(I_n \otimes W^{(k)}\right)A^{(k)}(E),
\label{eq:layer-input}
\end{align}
where
$$S^{(k)}:= \operatorname{Diag}\bigl(\sigma' \odot U^{(k)}(\varphi^{(k)},x,E)\bigr)\in \mathbb{R}^{n {d_{k}} \times n {d_{k}}}$$
denotes the diagonal matrix of the derivatives of the activation function evaluated at $U^{(k)}(\varphi^{(k)},x,E)$.

Using $v^{(k)} = A^{(k)}(E)x$ gives
\begin{align*}
D_{W^{(k)}} U^{(k)}(\varphi^{(k)},x,E)
=&D_{W^{(k)}} \left((I_{n} \otimes  W^{(k)}) A^{(k)}(E) x + \mathds{1}_n \otimes b^{(k)}\right)\\
=&D_{W^{(k)}} \left((I_{n} \otimes  W^{(k)}) v^{(k)}\right)\\
=&D_{W^{(k)}} \begin{pmatrix} W^{(k)} v_1^{(k)} \\ \vdots \\ W^{(k)} v_n^{(k)} \end{pmatrix}
=\begin{pmatrix} I_{{d_{k}}} \otimes {v_1^{(k)}}^\top \\ \vdots \\ I_{{d_{k}}} \otimes {v_n^{(k)}}^\top \end{pmatrix}.
\end{align*}
Furthermore,
\begin{align*}
D_{b^{(k)}} U^{(k)}(\varphi^{(k)},x,E)
=&D_{b^{(k)}}\left((I_{n} \otimes  W^{(k)}) A^{(k)}(E) x + \mathds{1}_n \otimes b^{(k)}\right)\\
=&D_{b^{(k)}} \left(\mathds{1}_n \otimes b^{(k)}\right)\\
=&\mathds{1}_n \otimes I_{{d_{k}}}.
\end{align*}
As before, by stacking the derivatives with respect to $W^{(k)}$ and $b^{(k)}$ in the same order as $\varphi^{(k)}$, we can write this compactly as
\begin{align*}
D_{\varphi^{(k)}} U^{(k)}(\varphi^{(k)},x,E)
=\begin{pmatrix}
I_{{d_{k}}} \otimes \left[{v_1^{(k)}}^\top \,1\right]\\
\vdots\\
I_{{d_{k}}} \otimes \left[{v_n^{(k)}}^\top \,1\right]
\end{pmatrix}.
\end{align*}
Hence, the Jacobian with respect to the parameters is given by
\begin{align}
D_{\varphi^{(k)}} T^{(k)}(\varphi^{(k)},x,E)
&=S^{(k)} D_{\varphi^{(k)}} U^{(k)}(\varphi^{(k)},x,E) \nonumber\\
&=S^{(k)}
\begin{pmatrix}
I_{{d_{k}}} \otimes \left[{v_1^{(k)}}^\top \,1\right]\\
\vdots\\
I_{{d_{k}}} \otimes \left[{v_n^{(k)}}^\top \,1\right]
\end{pmatrix}.
\label{eq:layer-parameter}
\end{align}
\subsection{Proofs for Subsection~\ref{subsec:Lipschitz}}
\label{app:proof-lipschitz}
Throughout this appendix, we present the proofs for the shared-encoder architecture. These results can be easily adapted to the separate-encoder case. Since the Lipschitz estimates are stated at the level of asymptotic order, applying the same bounds to identical, independent encoders only changes constant prefactors.

\subsubsection{Preliminaries and aggregation bounds}
\label{app:aggregation-bounds}

For the proofs of our results from Section~\ref{sec:robustness-convergence}, we repeatedly rely on the following norm relations and inequalities for matrices $A,B$ and vectors $c,d$.
\begin{align*}
\norm*{A\otimes B} =& \norm*{A}\norm*{B}, \\    
\norm*{\begin{pmatrix}  A \\B   \end{pmatrix}} \leq& \sqrt{\norm*{A}^2 + \norm*{B}^2}, \\
\norm*{[A\, B]} \leq& \sqrt{\norm*{A}^2 + \norm*{B}^2}, \\
\norm*{c^\top}_{\operatorname{op}} =& \norm*{c}, \\
\norm*{\begin{pmatrix} c\\d \end{pmatrix}} =& \sqrt{\norm*{c}^2+\norm*{d}^2},\\
\norm*{A} \leq& \norm*{A}_F,
\end{align*}
where $\norm*{\cdot}_F$ denotes the Frobenius norm.

For the main results in Section~\ref{sec:robustness-convergence}, we frequently use of the Cauchy--Schwarz inequality in the form
\begin{align*}
\left(\sum_{i=1}^n a_i\right)^2 \leq n \sum_{i=1}^n a_i^2 
\end{align*} 
for $a_1,\ldots,a_n \in \R$. We also use the following well-known lemma.
\begin{lemma}\label{lemm:Schrankensatz}
Let $U\subset\R^l$ be open and $\mathcal{X}\subset U$ convex. Further, let $f:U\to \R^m$ be a differentiable function with
$\sup_{x\in \mathcal{X}}\norm*{Df(x)} < \infty$. Then the restriction $f|_\mathcal{X}$ is Lipschitz continuous with Lipschitz constant $\sup_{x\in \mathcal{X}}\norm*{Df(x)}$.
\end{lemma}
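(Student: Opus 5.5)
The plan is to use the one-dimensional mean value theorem along the segment joining two points, which lies in $\mathcal X$ by convexity and hence in the open set $U$ where $f$ is differentiable. Fix $x,y\in\mathcal X$ and set $L:=\sup_{z\in\mathcal X}\norm*{Df(z)}$. If $f(x)=f(y)$ there is nothing to prove, so assume otherwise and put $u:=(f(x)-f(y))/\norm*{f(x)-f(y)}$, a unit vector in $\R^m$.

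Define the scalar function $\phi(t):=\langle u, f(y+t(x-y))\rangle$ for $t$ in an open interval containing $[0,1]$. Such an interval exists because $U$ is open and the segment is compact and contained in $U$. By the chain rule $\phi$ is differentiable with $\phi'(t)=\langle u, Df(y+t(x-y))(x-y)\rangle$. The classical mean value theorem gives some $\xi\in(0,1)$ with $\phi(1)-\phi(0)=\phi'(\xi)$. The left-hand side equals $\langle u,f(x)-f(y)\rangle=\norm*{f(x)-f(y)}$.

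On the right-hand side the point $y+\xi(x-y)$ lies in $\mathcal X$ by convexity. Cauchy--Schwarz and the definition of the operator norm then give $|\phi'(\xi)|\le \norm*{u}\,\norm*{Df(y+\xi(x-y))}\,\norm*{x-y}\le L\norm*{x-y}$. Combining the two sides yields the claim.

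The only delicate point is that $f$ is vector-valued, so the mean value equality fails componentwise. This is why I reduce to a scalar function via the direction $u$ rather than applying the theorem to each coordinate. Only differentiability, not continuous differentiability, is needed, since the scalar mean value theorem requires only differentiability on the open interval and continuity on the closed one, both of which follow from differentiability of $f$ on $U$.
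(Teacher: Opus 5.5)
Your proof is correct and complete. The paper gives no argument of its own; it cites Proposition~C.29 of Lee's textbook. That result is formulated for $C^1$ maps on compact convex sets and proved by writing $f(x)-f(y)=\int_0^1 Df(y+t(x-y))(x-y)\,dt$ and bounding the integrand.

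Your route differs in two ways. First, you reduce to the scalar function $\phi(t)=\langle u,f(y+t(x-y))\rangle$ with $u$ the unit vector along $f(x)-f(y)$. This sidesteps the failure of the mean value equality for vector-valued maps, and convexity places $y+\xi(x-y)$ in $\mathcal X$, so the supremum bound applies. Second, because you use the mean value theorem instead of integrating $Df$, you need only pointwise differentiability and no compactness.

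The citation buys brevity, but its hypotheses are strictly stronger than the lemma's. This matters for how the lemma is used: under Assumption~\ref{assum:bounds} the parameter set is convex and closed but not compact, since the biases are unconstrained. To invoke the textbook result one must note that compactness there only serves to make the supremum finite, and that the maps in question are $C^1$ anyway. Your self-contained argument proves the lemma exactly as stated, with no such adjustment.
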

\begin{proof}
This lemma is Proposition C.29 in \cite{Lee2002IntroductionTS}. 
\end{proof}

\noindent\textbf{Aggregation bounds}: We are now in a position to prove Proposition~\ref{prop:operatornorm-aggregate}.

\begin{proof}[Proof of Proposition~\ref{prop:operatornorm-aggregate}]
Let $E\in\R^{n\times n}_{\geq 0}$ be a weighted adjacency matrix with entries $e_{ij}=E_{ij}$ and no self-loops. If $E=0$, both neighbor aggregates vanish and hence
\begin{align*}
\norm*{A^{(k)}_{\operatorname{mean}}(E)} =\norm*{A^{(k)}_{\sum}(E)}=1.
\end{align*}
Now assume in the following that the graph has at least one edge. To obtain bounds for the operator norm of $A^{(k)}_{\operatorname{mean}}(E)$, we define matrices $M^+, M^- \in \R^{n\times n}$ by
\begin{align*}
M^+_{ij}:= \1_{|\mathcal{N}^+(i)|>0} \frac{e_{j,i}}{|\mathcal{N}^+(i)|}, \quad M_{ij}^-:= \1_{|\mathcal{N}^-(i)|>0} \frac{e_{i,j}}{|\mathcal{N}^-(i)|}.
\end{align*}
Then
\begin{align*}
A^{(k)}_{\operatorname{mean}}(E) = P \left( \begin{pmatrix}I_n \\ M^+ \\ M^-\end{pmatrix} \otimes I_{d_{k-1}}\right)   
\end{align*} 
for a suitable permutation matrix $P \in \{0,1\}^{3n{d_{k-1}} \times 3n{d_{k-1}}}$.
Hence, the operator norm is bounded above by
\begin{align*}
\norm*{A^{(k)}_{\operatorname{mean}}(E)}^2 &=  \norm*{\begin{pmatrix}I_n \\ M^+ \\ M^-\end{pmatrix}}^2 \leq 1 + \norm*{M^+}^2 + \norm*{M^-}^2
\end{align*}
and bounded below by
\begin{align*}
\norm*{A^{(k)}_{\operatorname{mean}}(E)}^2 &\geq \max \{1+\norm*{M^+}^2, 1+\norm*{M^-}^2\}.
\end{align*}
Since the product of the row-sum norm and the column-sum norm is an upper bound for the spectral norm, we further obtain
\begin{align*}
\norm*{M^+}^2 &\leq \max_{i \in [n]} \left(\sum_{j \in [n]} |M_{ij}^+|\right) \max_{j\in[n]} \left( \sum_{i \in [n]} |M_{ij}^+|\right) \leq e_{\max}^2 \frac{\Delta^-}{\delta^+}.
\end{align*} 
A lower bound is obtained from
\begin{align*}
\norm*{M^+}^2 &\geq \max\left\{\max_{j \in [n]} \norm*{\begin{pmatrix} M_{1j}^+ \\ \vdots \\ M_{nj}^+ \end{pmatrix}}^2, \max_{i \in [n]} \norm*{\begin{pmatrix} M_{i1}^+ \\ \vdots \\ M_{in}^+ \end{pmatrix}}^2 \right\} \\
&=  \max\left\{ \max_{j \in [n]} \sum_{i=1}^n \1_{|\mathcal{N}^+(i)|>0} \frac{e_{j,i}^2}{|\mathcal{N}^+(i)|^2}, \max_{i \in [n]} \sum_{j=1}^n \1_{|\mathcal{N}^+(i)|>0} \frac{e_{j,i}^2}{|\mathcal{N}^+(i)|^2} \right\} \\
&\geq \max\left\{\max_{j\in[n]}e_{\min}^2\frac{|\mathcal N^-(j)|}{(\Delta^+)^2},\max_{\substack{i\in[n]\\|\mathcal N^+(i)|>0}}\frac{e_{\min}^2}{|\mathcal N^+(i)|}\right\}
= \max\left\{e^2_{\min} \frac{\Delta^-}{{\Delta^+}^2}, \frac{e^2_{\min}}{\delta^+}\right\}.
\end{align*} 
Similarly, we have
\begin{align*}
\max\left\{ e^2_{\min} \frac{\Delta^+}{{\Delta^-}^2}, \frac{e^2_{\min}}{\delta^-}\right\} \leq\norm*{M^-}^2 \leq e_{\max}^2 \frac{\Delta^+}{\delta^-}.    
\end{align*}
Therefore, the norm of $A^{(k)}_{\operatorname{mean}}(E)$ satisfies
\begin{align*}
1+e^2_{\min}\max\left\{\frac{\Delta^-}{{\Delta^+}^2}, \frac{1}{\delta^+}, \frac{\Delta^+}{{\Delta^-}^2}, \frac{1}{\delta^-}\right\}\leq\norm*{A^{(k)}_{\operatorname{mean}}(E)}^2 \leq 1 + e^2_{\max}\left(\frac{\Delta^+}{\delta^-} + \frac{\Delta^-}{\delta^+}\right),
\end{align*}
which is independent of the feature dimension, while its dependence on the graph is expressed through the degree quantities and edge weights.

Analogously,
\begin{align*}
{A}^{(k)}_{\sum}(E) =  P\left(\begin{pmatrix}I_n \\ \widehat{M}^+ \\ \widehat{M}^-\end{pmatrix} \otimes I_{d_{k-1}}\right)
\end{align*} 
with $\widehat{M}^+_{ij} = e_{j,i}$ and $\widehat{M}^-_{ij} = e_{i,j}.$ Hence, $\widehat{M}^{-} = (\widehat{M}^{+})^\top$ and both are given by the weighted adjacency matrix.
The claim follows by analogous calculations. 
\end{proof}

Further, the different aggregation functions differ in their Lipschitz constants with respect to the weighted adjacency matrix.

\begin{proposition}[Lipschitz constants of the aggregation map]\label{prop:lipschitz-aggregation-map}
For $\operatorname{sum}$ aggregation, the map $E\mapsto A^{(k)}_{\sum}(E)$ is globally Lipschitz continuous and satisfies
\begin{align*}
\norm*{A^{(k)}_{\sum}(E)-A^{(k)}_{\sum}(\overline E)}^2 \leq 2\norm*{E-\overline E}^2.
\end{align*}
For $\operatorname{mean}$ aggregation, assume that $E$ and $\overline E$ have the same nonempty edge support, i.e.
\begin{align*}
\1_{\{e_{ij}>0\}}=\1_{\{\overline e_{ij}>0\}} \quad \text{for all } i,j\in[n].
\end{align*}
Then the map $E\mapsto A^{(k)}_{\operatorname{mean}}(E)$ is Lipschitz continuous on this fixed-topology region and satisfies
\begin{align*}
\norm*{A^{(k)}_{\operatorname{mean}}(E)-A^{(k)}_{\operatorname{mean}}(\overline E)}^2 \leq \left(\frac{1}{(\delta^+)^2}+\frac{1}{(\delta^-)^2} \right)\norm*{E-\overline E}^2.
\end{align*}
\end{proposition}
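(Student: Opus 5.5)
The plan is to reduce both claims to operator-norm bounds on the small $n\times n$ matrices $\widehat M^\pm$ and $M^\pm$ introduced in the proof of Proposition~\ref{prop:operatornorm-aggregate}.

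\textbf{Step 1: reduction to $n\times n$ blocks.} Write, as in that proof,
\[
A^{(k)}(E)=P\left(\begin{pmatrix}I_n\\ M^+(E)\\ M^-(E)\end{pmatrix}\otimes I_{d_{k-1}}\right).
\]
The identity block does not depend on $E$. Therefore
\[
A^{(k)}(E)-A^{(k)}(\overline E)=P\left(\begin{pmatrix}0\\ M^+(E)-M^+(\overline E)\\ M^-(E)-M^-(\overline E)\end{pmatrix}\otimes I_{d_{k-1}}\right).
\]
Three facts then give
\[
\norm*{A^{(k)}(E)-A^{(k)}(\overline E)}^2\le\norm*{\Delta M^+}^2+\norm*{\Delta M^-}^2 .
\]
These facts are that $P$ is orthogonal, that $\norm*{B\otimes I}=\norm*{B}$, and the stacking inequality $\norm*{\binom{A}{B}}^2\le\norm*{A}^2+\norm*{B}^2$. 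Here $\Delta M^\pm$ denotes the difference of the corresponding blocks.

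\textbf{Step 2: sum aggregation.} In this case $\widehat M^+(E)=E^\top$ and $\widehat M^-(E)=E$. Hence
\[
\norm*{\Delta\widehat M^+}=\norm*{(E-\overline E)^\top}=\norm*{E-\overline E},
\]
and likewise $\norm*{\Delta\widehat M^-}=\norm*{E-\overline E}$. Summing the two squares gives the factor $2$ in the claimed bound. No support condition is needed.

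\textbf{Step 3: mean aggregation.} Under the fixed-support hypothesis, the neighbour sets $\mathcal N^\pm(i)$ coincide for $E$ and $\overline E$. The normalizing degrees are therefore identical. So $M^+(E)-M^+(\overline E)=D_+^{-1}(E-\overline E)^\top$, where $D_+$ is the diagonal matrix with entries $|\mathcal N^+(i)|$ for $|\mathcal N^+(i)|>0$. Rows with zero in-degree are zero for both matrices, so the corresponding entry of $D_+^{-1}$ can be set to $0$.

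It follows that
\[
\norm*{\Delta M^+}\le\norm*{D_+^{-1}}\,\norm*{E-\overline E}\le\frac{1}{\delta^+}\norm*{E-\overline E}.
\]
The last inequality holds because the minimal positive in-degree is $\delta^+$. Analogously, $\norm*{\Delta M^-}\le\frac{1}{\delta^-}\norm*{E-\overline E}$. Combining with Step 1 yields the stated constant $(\delta^+)^{-2}+(\delta^-)^{-2}$.

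\textbf{Main obstacle.} The only delicate point is the bookkeeping in Step 3. It must be verified that the degree normalization really factors out as a common diagonal left multiplier. This is exactly where equal support is required: without it, the quotients $e_{ji}/|\mathcal N^+(i)|$ jump when an edge appears or disappears.

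The claim is stated for a ``nonempty edge support,'' which guarantees that $\delta^\pm$ are well defined. If one direction had no edges at all, the corresponding block would be identically zero and that term could simply be dropped.
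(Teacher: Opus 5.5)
Your proposal is correct and follows the same route as the paper: write the aggregation operator as a row-permuted Kronecker block stack, note that the $E$-independent identity block cancels, and bound the two remaining blocks with the stacking inequality. For sum aggregation you use $\widehat M^+=E^\top$ and $\widehat M^-=E$; for mean aggregation you use the common degree normalization, where your factorization $M^+(E)-M^+(\overline E)=D_+^{-1}(E-\overline E)^\top$ just spells out a step the paper states without detail.
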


\begin{proof}
As in the proof of Proposition~\ref{prop:operatornorm-aggregate}, writing for sum aggregation
\begin{align*}
A^{(k)}_{\sum}(E) = P \left( \begin{pmatrix} I_n\\ \widehat{M}^+\\ \widehat{M}^- \end{pmatrix} \otimes I_{d_{k-1}}\right)
\end{align*}
immediately proves the claim.

As for mean aggregation the neighbor sets are fixed, we have 
\begin{align*}
\norm*{A^{(k)}_{\operatorname{mean}}(E)-A^{(k)}_{\operatorname{mean}}(\overline E)}^2 &\leq \norm*{M^+ - \overline M^+}^2 + \norm*{M^- - \overline M^-}^2 \\ 
&\leq \left(\frac{1}{(\delta^+)^2}+\frac{1}{(\delta^-)^2} \right)\norm*{E-\overline E}^2.
\end{align*} 
\end{proof}

\subsubsection{Parameter Lipschitz bounds for the MPNN}
\label{app:mpnn-parameter-bounds}

\noindent\textbf{Layer-wise Lipschitz estimates}: For a fixed weighted adjacency matrix $E$, we first establish layer-wise Lipschitz bounds for the MPNN.

\begin{lemma}[Lipschitz bound for a single layer]
Under Assumption~\ref{assum:bounds}, the Jacobians of a single layer of the MPNN are bounded and Lipschitz continuous for inputs satisfying $\norm*{x}^2\leq s_0nd_{k-1}$. Defining
\begin{align*} 
{{}_xL_\varphi^{(k)}}^2 :=& 2 \norm*{A^{(k)}(E)}^2\left(s_1 + 3s_0s_2\norm*{A^{(k)}(E)}^2 \beta_k^2 nd_{k-1} \right), \\   
{{}_xL_x^{(k)}}^2 :=& 6 s_2\norm*{A^{(k)}(E)}^4 \beta_k^4, \\
{{}_\varphi L_x^{(k)}}^2 :=& 2 \norm*{A^{(k)}(E)}^2 \left(3s_2 \beta_k^2n\left(s_0\norm*{A^{(k)}(E)}^2 d_{k-1}+1\right)+s_1 \right),\\
{{}_\varphi L_\varphi^{(k)}}^2 :=& 6s_0s_2 \norm*{A^{(k)}(E)}^2 n^2d_{k-1}\left(s_0\norm*{A^{(k)}(E)}^2 d_{k-1}+1\right),\\
{B^{(k)}_\varphi}^2 :=& s_1 n\left(s_0\norm*{A^{(k)}(E)}^2 d_{k-1}+1\right),\\
{B^{(k)}_x}^2 :=& s_1 \norm*{A^{(k)}(E)}^2 \beta_k^2
\end{align*}
we have
\begin{align*}
\norm*{D_xT^{(k)}(\varphi,x,E) - D_xT^{(k)}(\bar\varphi,\bar x,E)}^2 
\leq& \,{{}_xL^{(k)}_\varphi}^2 \norm*{\varphi-\bar\varphi}^2 + {{}_xL^{(k)}_x}^2\norm*{x-\bar x}^2, \\
\norm*{D_\varphi T^{(k)}(\varphi,x,E) - D_\varphi T^{(k)}(\bar\varphi,\bar x,E)}^2 \leq& \,{{}_\varphi L^{(k)}_\varphi}^2 \norm*{\varphi-\bar\varphi}^2 + {{}_\varphi L^{(k)}_x}^2\norm*{x-\bar x}^2,\\
\norm*{D_xT^{(k)}(\varphi,x,E)}^2 \leq {B^{(k)}_x}^2, \quad& \norm*{D_\varphi T^{(k)}(\varphi,x,E)}^2 \leq {B^{(k)}_\varphi}^2.
\end{align*}
\label{lemm:lipschitz-single}
\end{lemma}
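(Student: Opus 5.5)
We prove all six estimates directly from the explicit Jacobians \eqref{eq:layer-input} and \eqref{eq:layer-parameter}, writing $A:=A^{(k)}(E)$. We use the submultiplicativity and block-norm relations collected above. We also use the elementary inequality $(u+w)^2\le 2u^2+2w^2$, which produces the factors $2$. Two preliminary facts are needed. First, $\norm*{S^{(k)}}\le\norm*{\sigma'}_\infty=\sqrt{s_1}$. Second, since $\sigma'$ is Lipschitz with constant $\sqrt{s_2}$ and acts componentwise,
\begin{align*}
\norm*{S^{(k)}-\bar S^{(k)}}\le\sqrt{s_2}\,\norm*{U^{(k)}(\varphi,x,E)-U^{(k)}(\bar\varphi,\bar x,E)}_\infty\le \sqrt{s_2}\,\norm*{U-\bar U}.
\end{align*}
The difference $U-\bar U$ is then split as
\begin{align*}
(I_n\otimes W)A(x-\bar x)+\bigl(I_n\otimes(W-\bar W)\bigr)A\bar x+\mathds 1_n\otimes(b-\bar b).
\end{align*}
Its first part is bounded by $\beta_k\norm*{A}\norm*{x-\bar x}$. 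The parameter part is bounded by the norm of $D_\varphi U$ applied to $\varphi-\bar\varphi$, namely by $\sqrt{n(s_0\norm*{A}^2 d_{k-1}+1)}\,\norm*{\varphi-\bar\varphi}$. This uses $\norm*{v}^2=\norm*{A\bar x}^2\le\norm*{A}^2 s_0 nd_{k-1}$ and $\norm*{\mathds 1_n\otimes I}^2=n$.

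\textbf{Bounds on the Jacobians.} From \eqref{eq:layer-input} we get $\norm*{D_xT}\le\sqrt{s_1}\,\beta_k\norm*{A}$, which is ${B_x^{(k)}}^2$. For $D_\varphi T$, the stacked matrix in \eqref{eq:layer-parameter} has squared operator norm at most $\sum_i(\norm*{v_i}^2+1)=\norm*{v}^2+n$. This gives ${B^{(k)}_\varphi}^2=s_1 n(s_0\norm*{A}^2d_{k-1}+1)$.

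\textbf{Lipschitz continuity of $D_xT$.} We write
\begin{align*}
D_xT-D_x\bar T=(S-\bar S)(I_n\otimes W)A+\bar S\bigl(I_n\otimes(W-\bar W)\bigr)A.
\end{align*}
The second term is bounded by $\sqrt{s_1}\norm*{A}\norm*{\varphi-\bar\varphi}$. The first term is bounded by $\sqrt{s_2}\,\beta_k\norm*{A}\norm*{U-\bar U}$. Inserting the split of $U-\bar U$ and squaring with the factor $2$ or $3$ gives the structure of ${}_xL_\varphi$ and ${}_xL_x$: the $x$-part produces $s_2\norm*{A}^4\beta_k^4$, and the parameter part produces $s_0s_2\norm*{A}^4\beta_k^2 nd_{k-1}$ together with $s_1\norm*{A}^2$. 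At this point I would check that the $n$-dependent contribution of the bias term is harmless or absorbed. If it is not, that bias piece is where I expect a small mismatch in constants, which I would resolve by using the $\infty$-norm bound on $S-\bar S$ more carefully. Node-wise, the bias difference contributes only $\norm*{b-\bar b}$ in sup norm, not $\sqrt n\norm*{b-\bar b}$.

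\textbf{Lipschitz continuity of $D_\varphi T$.} This is analogous:
\begin{align*}
D_\varphi T-D_\varphi\bar T=(S-\bar S)D_\varphi U+\bar S\,(D_\varphi U-D_\varphi\bar U).
\end{align*}
Here $D_\varphi U$ is independent of $\varphi$ and depends on $x$ only through $v=Ax$, and the map $x\mapsto D_\varphi U$ is linear. Its difference therefore has norm at most $\norm*{A}\norm*{x-\bar x}$, which yields the $2s_1\norm*{A}^2$ term of ${}_\varphi L_x$. The first term is bounded by $\sqrt{s_2}\sqrt{n(s_0\norm*{A}^2d_{k-1}+1)}\,\norm*{U-\bar U}$, and inserting the split again gives the remaining terms of ${}_\varphi L_x$ and ${}_\varphi L_\varphi$.

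The main obstacle is purely bookkeeping. The difficulty is choosing where to use the sup-norm versus the Euclidean norm in bounding $S-\bar S$, so that the powers of $n$ and $d_{k-1}$ match exactly the stated constants. Conceptually, all steps are product-rule estimates.
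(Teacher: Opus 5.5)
Your plan follows the paper's proof. It uses the same product-rule splits of $D_xT^{(k)}$ and $D_\varphi T^{(k)}$, the bounds $\|S^{(k)}\|\le\sqrt{s_1}$ and $\|S^{(k)}-\bar S^{(k)}\|\le\sqrt{s_2}\,\|U-\bar U\|$, the estimate $\|v^{(k)}\|^2\le\|A\|^2 s_0 n d_{k-1}$, and the same derivation of $B_x^{(k)}$ and $B_\varphi^{(k)}$. The paper pairs $S-\bar S$ with the barred Jacobian instead of the unbarred one, which is immaterial.

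The only step you leave open is the constant bookkeeping, and there your diagnosis is off. The bias term needs no sup-norm refinement. Since $A^{(k)}(E)$ contains an identity block, $\|A\|\ge 1$. Together with $s_0d_{k-1}\ge 1$ from Assumption~\ref{assum:bounds} (imposed for exactly this purpose), this gives $\max\{n,\|\bar v\|^2\}\le s_0\|A\|^2nd_{k-1}$. What actually matters is how you square $U-\bar U$. The paper uses the three-piece split with $(a+b+c)^2\le 3(a^2+b^2+c^2)$:
\[
\|U-\bar U\|^2\le 3\bigl(n\|b-\bar b\|^2+\|\bar v\|^2\|W-\bar W\|^2+\beta_k^2\|A\|^2\|x-\bar x\|^2\bigr)\le 3s_0\|A\|^2nd_{k-1}\|\varphi-\bar\varphi\|^2+3\beta_k^2\|A\|^2\|x-\bar x\|^2 .
\]
Insert this into $2s_2\beta_k^2\|A\|^2\|U-\bar U\|^2+2s_1\|A\|^2\|\varphi-\bar\varphi\|^2$ and into $2s_2n(s_0\|A\|^2d_{k-1}+1)\|U-\bar U\|^2+2s_1\|A\|^2\|x-\bar x\|^2$. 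This yields exactly the stated $({}_xL_\varphi^{(k)})^2$, $({}_xL_x^{(k)})^2$, $({}_\varphi L_\varphi^{(k)})^2$ and $({}_\varphi L_x^{(k)})^2$.

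Your lumped bound $\sqrt{n(s_0\|A\|^2d_{k-1}+1)}\,\|\varphi-\bar\varphi\|$ for the parameter part, squared with a plain factor $2$, does not certify these constants. It gives $4s_2\beta_k^2\|A\|^2n(s_0\|A\|^2d_{k-1}+1)$ in place of $6s_0s_2\beta_k^2\|A\|^4nd_{k-1}$ in $({}_xL_\varphi^{(k)})^2$. It also gives $4s_2n^2(s_0\|A\|^2d_{k-1}+1)^2$ in place of $6s_0s_2\|A\|^2n^2d_{k-1}(s_0\|A\|^2d_{k-1}+1)$ in $({}_\varphi L_\varphi^{(k)})^2$. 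Both are larger whenever $s_0\|A\|^2d_{k-1}<2$, for example on an edgeless graph with $s_0d_{k-1}=1$.

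If you prefer to keep the lumped bound, use the weighted inequality $(a+b)^2\le 3a^2+\tfrac32 b^2$ instead. Then $\tfrac32(s_0\|A\|^2d_{k-1}+1)\le 3s_0\|A\|^2d_{k-1}$, again by $s_0\|A\|^2d_{k-1}\ge 1$, and you land on the same estimate for $\|U-\bar U\|^2$ as above. The sup-norm refinement you suggest is not needed. It also does not remove this slack by itself: with factor $2$ it still overshoots for $n=1$.
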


\begin{proof}
Let $\varphi,\bar\varphi$ be two different parameter vectors and $x,\bar x$ two different inputs for layer $k$. We write $\bar W^{(k)}$ and $\bar b^{(k)}$ for the weights and bias corresponding to $\bar\varphi$. Further, set
\begin{align*}
v^{(k)}:=A^{(k)}(E)x,
\qquad
\bar v^{(k)}:=A^{(k)}(E)\bar x.
\end{align*}

By \eqref{eq:layer-input},
\begin{align*}
\norm*{D_x T^{(k)} (\varphi, x,E) -  D_x T^{(k)} (\bar\varphi, \bar x,E)}^2 
=& \norm*{S^{(k)} D_x U^{(k)} (\varphi,x,E) - \bar S^{(k)} D_xU^{(k)} (\bar\varphi, \bar x,E)}^2  \\
\leq&2 \norm*{D_x U^{(k)} (\bar\varphi, \bar x,E)}^2\norm*{S^{(k)}-\bar S^{(k)}}^2 \\ 
&+ 2\norm*{S^{(k)}}^2 \norm*{D_x U^{(k)} (\varphi,x,E) - D_xU^{(k)} (\bar\varphi, \bar x,E)}^2 \\
=:&2\left((i) + (ii)\right).
\end{align*}
Since $\norm*{\cdot} \leq \norm*{\cdot}_F$ holds for any matrix, we further estimate
\begin{align*}
(i) \leq& \norm*{(I_n \otimes \bar W^{(k)}) A^{(k)}(E)}^2 
s_2 \norm*{U^{(k)}(\varphi,x,E) - U^{(k)}(\bar\varphi, \bar x,E)}^2 \\
\leq& \norm*{\bar W^{(k)}}^2\norm*{A^{(k)}(E)}^2 s_2 
\left(\norm*{\mathds{1}_n \otimes (b^{(k)} - \bar b^{(k)})}+\norm*{\left(I_n \otimes  W^{(k)} \right) v^{(k)} - \left(I_n \otimes  \bar W^{(k)} \right) \bar v^{(k)}}\right)^2 \\
\leq& \norm*{\bar W^{(k)}}^2\norm*{A^{(k)}(E)}^2s_2 3\left(n \norm*{b^{(k)} - \bar b^{(k)}}^2 + \norm*{W^{(k)}-\bar W^{(k)}}^2\norm*{\bar v^{(k)}}^2 + \norm*{W^{(k)}}^2\norm*{v^{(k)}-\bar v^{(k)}}^2\right).
\end{align*}
Since $A^{(k)}(E)$ is fixed in this lemma, we have
\begin{align*}
\norm*{v^{(k)}-\bar v^{(k)}}^2 \leq& \norm*{A^{(k)}(E)}^2\norm*{x-\bar x}^2.
\end{align*}

For (ii), we use
\begin{align*}
(ii) \leq& s_1 \norm*{(I_n \otimes W^{(k)})A^{(k)}(E) - (I_n \otimes \bar W^{(k)}) A^{(k)}(E)}^2 \\
\leq& s_1 \norm*{W^{(k)}-\bar W^{(k)}}^2 \norm*{A^{(k)}(E)}^2 .
\end{align*}
Collecting these estimates yields
\begin{align*}
\biggl\|D&_x T^{(k)} (\varphi, x,E) -  D_x T^{(k)} (\bar\varphi, \bar x,E)\biggr\|^2 \\
\leq& 6\norm*{\bar W^{(k)}}^2\norm*{A^{(k)}(E)}^2s_2 n \norm*{b^{(k)} - \bar b^{(k)}}^2 \\
&+2\norm*{A^{(k)}(E)}^2\left(s_1 +3\norm*{\bar W^{(k)}}^2 s_2\norm*{\bar v^{(k)}}^2\right)\norm*{W^{(k)} -\bar W^{(k)}}^2 \\
&+ 6\norm*{\bar W^{(k)}}^2\norm*{A^{(k)}(E)}^4 s_2\norm*{W^{(k)}}^2 \norm*{x-\bar x}^2 \\
\leq& \max\left\{6\norm*{\bar W^{(k)}}^2\norm*{A^{(k)}(E)}^2s_2 n,
2\norm*{A^{(k)}(E)}^2\left(s_1 +3\norm*{\bar W^{(k)}}^2 s_2\norm*{\bar v^{(k)}}^2\right)\right\} \norm*{\varphi - \bar\varphi}^2\\
&+ 6\norm*{\bar W^{(k)}}^2\norm*{A^{(k)}(E)}^4 s_2\norm*{W^{(k)}}^2 \norm*{x-\bar x}^2 \\
\leq&  {{}_xL_\varphi^{(k)}}^2 \norm*{\varphi - \bar\varphi}^2 + {{}_xL_x^{(k)}}^2 \norm*{x-\bar x}^2.
\end{align*} 

Since $D_\varphi U^{(k)}(\varphi,x,E)$ is independent of $\varphi$, and using the identity
\begin{align*}
\begin{pmatrix}I_{{d_{k}}}\otimes ([v_1^{(k)}\,1])^\top\\\vdots\\I_{{d_{k}}}\otimes ([v_n^{(k)}\,1])^\top\end{pmatrix} -
\begin{pmatrix}I_{{d_{k}}}\otimes ([\bar v_1^{(k)}\,1])^\top\\\vdots\\I_{{d_{k}}}\otimes ([\bar v_n^{(k)}\,1])^\top\end{pmatrix}
= \begin{pmatrix}I_{{d_{k}}}\otimes ([v_1^{(k)}-\bar v_1^{(k)}\,0])^\top\\\vdots\\I_{{d_{k}}}\otimes ([v_n^{(k)}-\bar v_n^{(k)}\,0])^\top\end{pmatrix},
\end{align*}
together with the bounds derived above, we obtain
\begin{align*}
\biggl\|D_\varphi T^{(k)}(\varphi,x,E) -& D_\varphi T^{(k)}(\bar\varphi,\bar x,E)\biggr\|^2 \\
=& \norm*{S^{(k)}D_\varphi U^{(k)}(x,E) - \bar S^{(k)}D_\varphi U^{(k)}(\bar x,E)}^2 \\
\leq& 2\norm*{S^{(k)} - \bar S^{(k)}}^2 \norm*{D_\varphi U^{(k)}(\bar x,E)}^2 + 2\norm*{S^{(k)}}^2  \norm*{v^{(k)}-\bar v^{(k)}}^2 \\
\leq& 6\norm*{\begin{pmatrix}\bar v^{(k)}\\ \mathds{1}_n\end{pmatrix}}^2 s_2 \biggl(n \norm*{b^{(k)} - \bar b^{(k)}}^2 
+ \norm*{W^{(k)}-\bar W^{(k)}}^2\norm*{\bar v^{(k)}}^2 \\
&+ \norm*{W^{(k)}}^2\norm*{A^{(k)}(E)}^2\norm*{x-\bar x}^2\biggr) + 2s_1 \norm*{A^{(k)}(E)}^2\norm*{x-\bar x}^2 \\
\leq& \left(6\norm*{\begin{pmatrix}\bar v^{(k)}\\ \mathds{1}_n\end{pmatrix}}^2 s_2 \norm*{W^{(k)}}^2\norm*{A^{(k)}(E)}^2 + 2s_1 \norm*{A^{(k)}(E)}^2 \right) \norm*{x-\bar x}^2 \\
&+ 6\norm*{\begin{pmatrix}\bar v^{(k)}\\ \mathds{1}_n\end{pmatrix}}^2 s_2 \max\left\{n, \norm*{\bar v^{(k)}}^2\right\} \norm*{\varphi - \bar\varphi}^2\\
\leq& {{}_\varphi L_x^{(k)}}^2\norm*{x-\bar x}^2 + {{}_\varphi L_\varphi^{(k)}}^2 \norm*{\varphi - \bar\varphi}^2.
\end{align*}

Finally, the uniform bounds for the individual Jacobians follow from
\begin{align*}
\norm*{D_x T^{(k)}(\varphi,x,E)}^2 \leq& \norm*{S^{(k)}}^2\norm*{W^{(k)}}^2\norm*{A^{(k)}(E)}^2 
\leq {B^{(k)}_x}^2
\end{align*}
and
\begin{align*}
\norm*{D_\varphi T^{(k)}(\varphi,x,E)}^2 \leq& s_1 \sum_{i=1}^n \norm*{\begin{pmatrix}v_i^{(k)}\\1\end{pmatrix}}^2 
\leq {B^{(k)}_\varphi}^2.
\end{align*}
\end{proof}

\noindent\textbf{Inductive MPNN bounds}: We now extend the single-layer estimates to the entire MPNN by propagating the Lipschitz and boundedness properties.

\begin{proposition}[Inductive Lipschitz bound for the MPNN up to layer $k$]\label{prop:lipschitz-GNN}
Let Assumption~\ref{assum:bounds} hold and fix a graph input $(X,E)$.  Then, for all admissible parameter vectors $\theta^k,\overline{\theta}^k$, the $k$-layer MPNN satisfies
\begin{align*}
\norm*{D_{\theta^k} f^{k}(\theta^k,x^{(0)},E)}\leq B_{\leq k}.
\end{align*}
Moreover, its derivative with respect to the parameters is Lipschitz continuous, i.e.,
\begin{align*}
\norm*{D_{\theta^k}f^{k}(\theta^k,x^{(0)},E)-D_{\theta^k}f^{k}(\overline{\theta}^k,x^{(0)},E)}
\leq L_{\leq k}\norm*{\theta^k-\overline{\theta}^k}.
\end{align*}
The constants are given recursively by
\begin{align*}
B_{\leq k}^2 :=& {B_\varphi^{(k)}}^2 + {B_x^{(k)}}^2 B_{\leq k-1}^2, \\
L_{\leq k}^2 :=& 3\max\left\{\left(B_{\leq k-1}^2 {{}_xL^{(k)}_x}^2+{{}_\varphi L^{(k)}_x}^2\right) B_{\leq k-1}^2 + {B_x^{(k)}}^2 L_{\leq k-1}^2, B_{\leq k-1}^2 {{}_xL^{(k)}_\varphi}^2+ {{}_\varphi L^{(k)}_\varphi}^2\right\},
\end{align*}
where $B_{\leq 1}^2 = {B_\varphi^{(1)}}^2, L_{\leq 1}^2 = {{}_\varphi L_\varphi^{(1)}}^2$ and the constants for $1\leq i\leq k$ are given in Lemma~\ref{lemm:lipschitz-single}.
\end{proposition}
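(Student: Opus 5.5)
We proceed by induction on $k$, using the recursive structure $f^k(\theta^k,x^{(0)},E)=T^{(k)}(\varphi^{(k)},f^{k-1}(\theta^{k-1},x^{(0)},E),E)$ together with the single-layer estimates of Lemma~\ref{lemm:lipschitz-single}. The lemma needs inputs with $\norm*{x}^2\leq s_0nd_{k-1}$. This holds for every intermediate output $f^{k-1}$: for $k-1\geq 1$ each entry is bounded by $\norm*{\sigma}_\infty$, and for $k-1=0$ each entry is bounded by $f_{\max}$. For $k=1$, $f^1=T^{(1)}(\varphi^{(1)},x^{(0)},E)$ with $x^{(0)}$ fixed. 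Hence $D_{\theta^1}f^1=D_\varphi T^{(1)}$, and the lemma directly gives $B_{\leq1}^2={B_\varphi^{(1)}}^2$ and $L_{\leq1}^2={{}_\varphi L_\varphi^{(1)}}^2$ (take $x=\bar x$).

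For the inductive step, order the parameters as $\theta^k=(\varphi^{(k)},\theta^{k-1})$. By the chain rule, the Jacobian is the block row
\begin{align*}
D_{\theta^k}f^k=\Bigl[\,D_\varphi T^{(k)}(\varphi^{(k)},f^{k-1},E)\;\;\; D_xT^{(k)}(\varphi^{(k)},f^{k-1},E)\,D_{\theta^{k-1}}f^{k-1}\Bigr].
\end{align*}
Using $\norm*{[A\,B]}^2\leq\norm*{A}^2+\norm*{B}^2$ and submultiplicativity, together with the lemma and the induction hypothesis, we get $B_{\leq k}^2={B_\varphi^{(k)}}^2+{B_x^{(k)}}^2B_{\leq k-1}^2$.

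For the Lipschitz bound, write $f=f^{k-1}(\theta^{k-1})$ and $\bar f=f^{k-1}(\bar\theta^{k-1})$. The first induction claim and Lemma~\ref{lemm:Schrankensatz} give $\norm*{f-\bar f}\leq B_{\leq k-1}\norm*{\theta^{k-1}-\bar\theta^{k-1}}$; the parameter set is convex after the box restriction, or one argues on segments. We again split the difference of block rows into its two blocks. The first block is bounded by the lemma:
\begin{align*}
\norm*{D_\varphi T^{(k)}(\varphi,f)-D_\varphi T^{(k)}(\bar\varphi,\bar f)}^2\leq{{}_\varphi L^{(k)}_\varphi}^2\norm*{\varphi-\bar\varphi}^2+{{}_\varphi L^{(k)}_x}^2B_{\leq k-1}^2\norm*{\theta^{k-1}-\bar\theta^{k-1}}^2.
\end{align*}
For the second block, add and subtract $D_xT^{(k)}(\bar\varphi,\bar f)\,D f^{k-1}(\theta^{k-1})$. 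This bounds the block by
\begin{align*}
\norm*{D_xT^{(k)}(\varphi,f)-D_xT^{(k)}(\bar\varphi,\bar f)}\,B_{\leq k-1}+B_x^{(k)}\,L_{\leq k-1}\norm*{\theta^{k-1}-\bar\theta^{k-1}}.
\end{align*}
Squaring with Cauchy--Schwarz introduces a factor $3$, counting the $\varphi$-part, the $x$-part inside the first difference, and the induction term. This yields contributions $B_{\leq k-1}^2\,{{}_xL^{(k)}_\varphi}^2\norm*{\varphi-\bar\varphi}^2$ and $\bigl(B_{\leq k-1}^2{{}_xL_x^{(k)}}^2B_{\leq k-1}^2+{B_x^{(k)}}^2L_{\leq k-1}^2\bigr)\norm*{\theta^{k-1}-\bar\theta^{k-1}}^2$.

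Finally, sum the squared contributions of both blocks. Collect the coefficients of $\norm*{\varphi-\bar\varphi}^2$ and $\norm*{\theta^{k-1}-\bar\theta^{k-1}}^2$ separately, and bound the sum by the maximum coefficient times $\norm*{\varphi-\bar\varphi}^2+\norm*{\theta^{k-1}-\bar\theta^{k-1}}^2=\norm*{\theta^k-\bar\theta^k}^2$. This reproduces the stated recursion for $L_{\leq k}^2$, with the common factor $3$ absorbing the squaring constants. The main obstacle is this bookkeeping. The factor $3$ from the three-term split must dominate the factors arising in both blocks, so the second block should be split as three terms $a+b+c$ with $(a+b+c)^2\leq3(a^2+b^2+c^2)$. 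One must also check that the first-block terms fit under the same factor; this works since $3\geq1$.
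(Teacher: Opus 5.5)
Your proposal is correct and follows essentially the same argument as the paper. Both use induction on $k$ via the chain-rule block row $\bigl[D_\varphi T^{(k)}\;\; D_xT^{(k)}D_{\theta^{k-1}}f^{k-1}\bigr]$, an add-and-subtract in the second block, the single-layer bounds of Lemma~\ref{lemm:lipschitz-single} together with Lemma~\ref{lemm:Schrankensatz} to get $\norm*{x^{(k-1)}-\bar x^{(k-1)}}\leq B_{\leq k-1}\norm*{\theta^{k-1}-\bar\theta^{k-1}}$, and a final bound by the maximal coefficient.

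The only differences are in the bookkeeping. The paper bounds by $(\norm*{\cdot}+\norm*{\cdot})^2$ and applies the factor $3$ to all three terms at once. You instead use $\norm*{[A\,B]}^2\leq\norm*{A}^2+\norm*{B}^2$ and need the factor only in the second block. This gives a marginally sharper constant, which is still dominated by the stated recursion.
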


\begin{proof}
The bounds for $k=1$ follow directly from Lemma~\ref{lemm:lipschitz-single}. For $k\ge 2$, we use an induction argument. Hence, we assume that the Jacobian of the MPNN with $k-1$ layers is bounded by $B_{\leq k-1}$ and Lipschitz continuous with constant $L_{\leq k-1}$.

Let
\begin{align*}
x^{(k-1)}:=f^{k-1}(\theta^{k-1},x^{(0)},E),
\qquad
\bar x^{(k-1)}:=f^{k-1}(\bar\theta^{k-1},x^{(0)},E).
\end{align*}
Since $f^{k}(\theta^{k},x^{(0)},E) = T^{(k)}(\varphi^{(k)},x^{(k-1)},E)$,
the chain rule yields
\begin{align*}
D_{\theta^{k}} f^{k}({\theta^{k}},x^{(0)},E)= \left[D_{\varphi}T^{(k)}(\varphi^{(k)},x^{(k-1)},E) \,\,\,D_{x} T^{(k)}(\varphi^{(k)},x^{(k-1)},E) \cdot D_{\theta^{k-1}}f^{k-1}(\theta^{k-1},x^{(0)},E) \right].
\end{align*}

For two parameter vectors $\theta^{k}, \overline{\theta}^{k}$ and consequently two inputs $x^{(k-1)},\overline{x}^{(k-1)}$, we estimate using Cauchy--Schwarz
\begin{align*}
\biggl\|D&_{\theta^{k}} f^{k}({\theta^{k}},x^{(0)},E) - D_{\theta^{k}} f^{k}({\overline{\theta}^{k}},x^{(0)},E)\biggr\|^2 \\
\leq&
\Bigl(\norm*{D_{\varphi} T^{(k)}(\varphi^{(k)},x^{(k-1)},E) - D_{\varphi} T^{(k)}(\overline{\varphi}^{(k)},\overline{x}^{(k-1)},E)} \\
&+ \Bigl\|D_x T^{(k)}(\varphi^{(k)},x^{(k-1)},E) D_{\theta^{k-1}} f^{k-1}({\theta^{k-1}},x^{(0)},E) \\
&\qquad - D_x T^{(k)}(\overline{\varphi}^{(k)},\overline{x}^{(k-1)},E) D_{\theta^{k-1}} f^{k-1}({\overline{\theta}^{k-1}},x^{(0)},E)\Bigr\|\Bigr)^2 \\
\leq& 3 \Big(\norm*{D_x T^{(k)}(\varphi^{(k)},x^{(k-1)},E)}^2 \norm*{D_{\theta^{k-1}} f^{k-1}({\theta^{k-1}},x^{(0)},E) - D_{\theta^{k-1}} f^{k-1}({\overline{\theta}^{k-1}},x^{(0)},E)}^2 \\
&+ \norm*{D_{\theta^{k-1}} f^{k-1}({\overline{\theta}^{k-1}},x^{(0)},E)}^2 \norm*{D_x T^{(k)}(\varphi^{(k)},x^{(k-1)},E) - D_x T^{(k)}(\overline{\varphi}^{(k)},\overline{x}^{(k-1)},E)}^2 \\
&+ \norm*{D_{\varphi} T^{(k)}(\varphi^{(k)},x^{(k-1)},E) - D_{\varphi} T^{(k)}(\overline{\varphi}^{(k)},\overline{x}^{(k-1)},E)}^2
\Big) \\
&=: 3\big((i) + (ii) + (iii)\big).    
\end{align*}

As $D_{\theta^{k-1}} f^{k-1}$ is Lipschitz continuous with constant $L_{\leq k-1}$ and $D_x T^{(k)}$ is bounded, we have
\begin{align*}
(i) \leq {B^{(k)}_x}^2 L_{\leq k-1}^2\norm*{\theta^{k-1}-\bar\theta^{k-1}}^2.
\end{align*}

By Lemma~\ref{lemm:lipschitz-single}, the map $D_xT^{(k)}(\varphi^{(k)},x^{(k-1)},E)$ is Lipschitz with constants ${}_xL^{(k)}_\varphi$ and ${}_xL^{(k)}_x$. Since $D_{\theta^{k-1}} f^{k-1}$ is bounded by $B_{\leq k-1}$, we obtain 
\begin{align*}
(ii) \leq& B_{\leq k-1}^2\left({{}_xL^{(k)}_\varphi}^2\norm*{\varphi^{(k)}-\bar\varphi^{(k)}}^2 + {{}_xL^{(k)}_x}^2 \norm*{x^{(k-1)}-\bar x^{(k-1)}}^2\right).
\end{align*}

Again by Lemma~\ref{lemm:lipschitz-single}, $D_{\varphi^{(k)}} T^{(k)}(\varphi^{(k)},x^{(k-1)},E)$ is Lipschitz in the first two arguments with constants ${}_\varphi L^{(k)}_\varphi$ and ${}_\varphi L^{(k)}_x$, which gives
\begin{align*}
(iii) \leq& {{}_\varphi L^{(k)}_\varphi}^2\norm*{\varphi^{(k)}-\bar\varphi^{(k)}}^2 + {{}_\varphi L^{(k)}_x}^2\norm*{x^{(k-1)}-\bar x^{(k-1)}}^2. 
\end{align*}

Since the derivative of $f^{k-1}$ is bounded by $B_{\leq k-1}$, Lemma~\ref{lemm:Schrankensatz} implies that $f^{k-1}$ is Lipschitz with constant at most $B_{\leq k-1}$. Hence we conclude
\begin{align*}
\norm*{D_{\theta^{k}} f^{k}({\theta^{k}},x^{(0)},E) - D_{\theta^{k}} f^{k}({\bar\theta^{k}},x^{(0)},E)}^2
\leq& 3  {B_x^{(k)}}^2 L_{\leq k-1}^2\norm*{\theta^{k-1}-\bar\theta^{k-1}}^2 \\
&+ 3 \left(B_{\leq k-1}^2 {{}_xL^{(k)}_\varphi}^2+{{}_\varphi L^{(k)}_\varphi}^2\right) \norm*{\varphi^{(k)}-\bar\varphi^{(k)}}^2 \\
&+ 3 \left(B_{\leq k-1}^2 {{}_xL^{(k)}_x}^2+{{}_\varphi L^{(k)}_x}^2\right) B_{\leq k-1}^2 \norm*{\theta^{k-1}-\bar\theta^{k-1}}^2\\
\leq& \max\biggl\{3 \left(B_{\leq k-1}^2 {{}_xL^{(k)}_x}^2+{{}_\varphi L^{(k)}_x}^2\right) B_{\leq k-1}^2 + 3  {B_x^{(k)}}^2 L_{\leq k-1}^2,\\
&3 \left(B_{\leq k-1}^2 {{}_xL^{(k)}_\varphi}^2+ {{}_\varphi L^{(k)}_\varphi}^2\right)\biggr\} 
\norm*{\theta^{k}-\bar\theta^{k}}^2 \\
=& L_{\leq k}^2 \norm*{\theta^k-\bar\theta^{k}}^2.
\end{align*}
Similarly, using the triangle inequality and the boundedness of $T^{(k)}$,
\begin{align*}
\norm*{D_{\theta^k} f^{k}(\theta^k,x^{(0)},E)}^2 
\leq& \norm*{D_\varphi T^{(k)}(\varphi^{(k)},x^{(k-1)},E)}^2 \\
&+ \norm*{D_xT^{(k)}(\varphi^{(k)},x^{(k-1)},E)}^2\norm*{D_{\theta^{k-1}} f^{k-1}(\theta^{k-1},x^{(0)},E)}^2 \\
\leq& {B_\varphi^{(k)}}^2 + {B_x^{(k)}}^2 B_{\leq k-1}^2 = B_{\leq k}^2.
\end{align*}
\end{proof}

With the above proposition, we are able to prove Theorem~\ref{theo:explicit-bounds}.

\begin{proof}[Proof of Theorem~\ref{theo:explicit-bounds}]
Solving the recursive formula given by Proposition~\ref{prop:lipschitz-GNN} yields
\begin{align*}
B_{\leq k}^2 &= {B_\varphi^{(k)}}^2 + {B_x^{(k)}}^2 B^2_{\leq k-1} \\
&={B_\varphi^{(k)}}^2 + {B_x^{(k)}}^2 \left({B_\varphi^{(k-1)}}^2 + {B_x^{(k-1)}}^2B_{\leq k-2}^2\right) \\
&= \sum_{j=1}^{k} \left(\prod_{l=j+1}^k {B_x^{(l)}}^2\right){B_\varphi^{(j)}}^2.
\end{align*}
Hence, for $q \neq 1$,
\begin{align*}
B_{\leq k}^2 &\leq \sum_{j=1}^k \left(\prod_{l=j+1}^k q\right) \alpha = \alpha \sum_{j=1}^k q^{k-j}= \alpha \frac{q^k-1}{q-1},
\end{align*}
and if $q=1$, then $B_{\leq k}^2 \leq \alpha k $.

With the notation
$$w_k:= 3B_{\leq k-1}^2\left(B_{\leq k-1}^2 {{}_xL^{(k)}_x}^2+{{}_\varphi L^{(k)}_x}^2+{{}_xL^{(k)}_\varphi}^2\right) + 3{{}_\varphi L^{(k)}_\varphi}^2$$
and 
\begin{align*}
{}_xL_x &:= \max \left\{{}_xL_x^{(1)}, \ldots, {}_xL_x^{(k)}\right\}, &{}_\varphi L_x := \max \left\{{}_\varphi L_x^{(1)}, \ldots, {}_\varphi L_x^{(k)}\right\} , \\
{}_xL_\varphi &:= \max \left\{{}_xL_\varphi^{(1)}, \ldots, {}_xL_\varphi^{(k)}\right\}, &{}_\varphi L_\varphi := \max \left\{{}_\varphi L_\varphi^{(1)}, \ldots, {}_\varphi L_\varphi^{(k)}\right\},
\end{align*}
we estimate
\begin{align*}
L_{\leq k}^2 \leq& 3 \left(\left(B_{\leq k-1}^2 {{}_xL^{(k)}_x}^2+{{}_\varphi L^{(k)}_x}^2\right) B_{\leq k-1}^2 + {B_x^{(k)}}^2 L_{\leq k-1}^2 + B_{\leq k-1}^2 {{}_xL^{(k)}_\varphi}^2+ {{}_\varphi L^{(k)}_\varphi}^2 \right) \\
=& w_k + 3{B_x^{(k)}}^2 L_{\leq k-1}^2 \\
\leq& w_k + 3{B_x^{(k)}}^2\left(w_{k-1} + 3{B_x^{(k-1)}}^2 L_{\leq k-2}^2\right) \\
\leq& \sum_{j=1}^k \left(\prod_{l=j+1}^k 3 {B_x^{(l)}}^2\right) w_j.
\end{align*}
For $q\neq 1$, using the bound on $B_{\le m}$ and setting $\widehat{\alpha}:= \tfrac{\alpha}{q-1}$, we obtain
\begin{align*}
L_{\leq k}^2 \leq& \sum_{j=1}^k \left(3 q\right)^{k-j} 3\left(\left(B_{\leq j-1}^2 {}_xL_x^2 + {}_\varphi L_x^2 + {}_xL_\varphi^2\right)B_{\leq j-1}^2 + {}_\varphi L_\varphi^2\right) \\
=& \sum_{m=0}^{k-1} \left(3 q\right)^{k-1-m} 3\left(\left(B_{\leq m}^2 {}_xL_x^2 + {}_\varphi L_x^2 + {}_xL_\varphi^2\right)B_{\leq m}^2 + {}_\varphi L_\varphi^2\right) \\
\leq& \sum_{m=0}^{k-1} \left(3 q\right)^{k-1-m} 3\left(\left(\widehat{\alpha}(q^m-1) {}_xL_x^2 + {}_\varphi L_x^2 + {}_xL_\varphi^2\right)\widehat{\alpha}(q^m-1) + {}_\varphi L_\varphi^2\right) \\
=& \widehat{\alpha}^2 {}_xL_x^2 \sum_{m=0}^{k-1} 3^{k-m} q^{k-1+m} + \widehat{\alpha} \left({}_\varphi L_x^2 + {}_xL_\varphi^2 - 2\widehat{\alpha} {}_xL_x^2\right) \sum_{m=0}^{k-1} 3^{k-m} q^{k-1} \\&+ \left(\widehat{\alpha}^2 {}_xL_x^2 - \widehat{\alpha}\left({}_\varphi L_x^2 + {}_xL_\varphi^2\right)+{}_\varphi L_\varphi^2\right) \sum_{m=0}^{k-1} 3^{k-m} q^{k-1-m}.
\end{align*}

Assume now that $q\neq 1$, $q\neq 3$ and $q\neq \tfrac{1}{3}$. Then
\begin{align*}
L_{\leq k}^2 \leq& \widehat{\alpha}^2 {}_xL_x^2 3^k q^{k-1} \frac{1-\left(\frac{q}{3}\right)^k}{1-\left(\frac{q}{3}\right)} + \widehat{\alpha} \left({}_\varphi L_x^2 + {}_xL_\varphi^2 - 2\widehat{\alpha} {}_xL_x^2\right) q^{k-1} \frac{3}{2}\left(3^k-1\right) \\
&+ \left(\widehat{\alpha}^2 {}_xL_x^2 - \widehat{\alpha}\left({}_\varphi L_x^2 + {}_xL_\varphi^2\right)+{}_\varphi L_\varphi^2\right) 3^k q^{k-1} \frac{1-\left(\frac{1}{3q}\right)^k}{1-\left(\frac{1}{3q}\right)} \\
=& 3^k q^{k-1} \left(C_1 \frac{1-\left(\frac{q}{3}\right)^k}{1-\left(\frac{q}{3}\right)} +  C_2 (1-3^{-k}) + C_3 \frac{1-\left(\frac{1}{3q}\right)^k}{1-\left(\frac{1}{3q}\right)}\right),
\end{align*}
where the constants $C_1, C_2,C_3$ can be chosen independently of $k$ and satisfy $C_i=\mathcal{O}(n^2d^2)$.

The calculations for the cases $q=3$ and $q=\tfrac{1}{3}$ work similarly. The case $q=1$ differs slightly, as we have to use the bound
\begin{align*}
B_{\leq m}^2 \leq \alpha m.
\end{align*}
However, all calculations are straightforward and yield
\begin{align*}
L_{\leq k}^2  = \begin{cases}
\mathcal{O}(n^2d^2q^{2k}) &\text{ if }  q>3, \\
\mathcal{O}(n^2d^2k3^{2k}) &\text{ if }  q=3, \\
\mathcal{O}(n^2d^23^kq^{k}) &\text{ if }  \tfrac{1}{3}<q<3,\\
\mathcal{O}(n^2d^2k) &\text{ if }  q=\tfrac{1}{3},\\
\mathcal{O}(n^2d^2) &\text{ if }  q<\tfrac{1}{3},
\end{cases}    
\end{align*}
where the $\mathcal{O}(\cdot)$ notation is with respect to the depth $k$, the number of nodes $n$ and the maximal feature dimension $d$.
\end{proof}

\noindent\textbf{Sharpness example}: We now consider Example~\ref{ex:lower-bounds} in detail. In particular, we derive lower bounds for the norm of the Jacobian and for its Lipschitz constant.

In the setup of Example~\ref{ex:lower-bounds} with $d_0=d_1=\ldots=d_K=1$, the aggregation matrix is independent of $k$ and given by
\begin{align*}
A(E)=A^{(k)}(E) =\begin{pmatrix}1\\0\\0\end{pmatrix}, \qquad \norm*{A(E)}=1, 
\end{align*}
where empty neighbor aggregates are taken to be zero. Further, by the choice of the weights and biases, the pre-activations
\begin{align*}
z^{(k)}=W^{(k)}A(E)x^{(k-1)}+b^{(k)}
\end{align*}
satisfy
\begin{align*}
z^{(k)}=x^{(k)}=0 \quad\text{for }k=1,\ldots,K-1, \qquad z^{(K)}=x_0, \qquad x^{(K)}=\tanh(x_0).
\end{align*}
It further holds that
\begin{align*}
\sigma'(0)=1,\quad \sigma''(0)=0, \quad\sigma'(x_0)=\frac{2}{3},\quad \sigma''(x_0)=-\frac{4}{3\sqrt{3}}.
\end{align*}
In particular, $s_0=s_1=1$, $s_2=16/27$, and thus
\begin{align*}
q=\norm*{\sigma'}_\infty^2\beta^2\norm*{A(E)}^2 =\beta^2>3.
\end{align*}
Moreover, the derivatives with respect to the biases are given by
\begin{align*}
D_{b^{(j)}}x^{(K)} =\sigma'(x_0)\beta^{K-j} =\frac{2}{3}\beta^{K-j}, \qquad j=1,\ldots,K.
\end{align*}
All derivatives with respect to the weight entries vanish, since the input to every layer is zero at the chosen parameter vector. Hence, the norm of the Jacobian satisfies
\begin{align*}
\norm*{D_{\theta^K}f^K(\theta^K,x^{(0)},E)}^2
&=\sum_{j=1}^K\left(D_{b^{(j)}}x^{(K)}\right)^2\\
&=\sigma'(x_0)^2\sum_{j=1}^K\beta^{2(K-j)}\\
&=\frac{4}{9}\sum_{l=0}^{K-1}q^l\\
&=\frac{4}{9}\frac{q^K-1}{q-1}.
\end{align*}
Since $\alpha=2$, the upper bound from Theorem~\ref{theo:explicit-bounds} yields
\begin{align*}
\frac{4}{9}\frac{q^K-1}{q-1} \leq B_{\leq K}^2 \leq 2\frac{q^K-1}{q-1},
\end{align*}
which shows geometric growth in the worst case.

The second derivatives satisfy
\begin{align*}
H_1:=D^2_{b^{(1)}}x^{(1)}=\sigma''(0)=0.
\end{align*}
For $2\leq k\leq K-1$, the chain rule gives
\begin{align*}
H_k:=D^2_{b^{(1)}}x^{(k)}
&=\sigma''(0)\beta^2 \left(D_{b^{(1)}}x^{(k-1)}\right)^2 +\sigma'(0)\beta H_{k-1}=0.
\end{align*}
As the first derivatives satisfy $D_{b^{(1)}}x^{(k)}=\beta^{k-1}$ for $k=1,\ldots,K-1$, we obtain in the last layer
\begin{align*}
H_K:=D^2_{b^{(1)}}x^{(K)}
&=D_{b^{(1)}}\left(\sigma'(z^{(K)})\beta D_{b^{(1)}}x^{(K-1)}\right)\\
&=\sigma''(x_0)\beta^2\left(D_{b^{(1)}}x^{(K-1)}\right)^2+\sigma'(x_0)\beta H_{K-1}\\
&=\sigma''(x_0)\beta^2\left(\beta^{K-2}\right)^2\\
&=\sigma''(x_0)\beta^{2(K-1)}.
\end{align*}

Thus, a lower bound on the Lipschitz constant of the derivative is given by
\begin{align*}
s_2q^{2(K-1)}=|\sigma''(x_0)|^2\beta^{4(K-1)}=|H_K|^2\leq L^2.
\end{align*}
Together with the upper bound from Theorem~\ref{theo:explicit-bounds}, this shows the claim.

\subsubsection{Input Lipschitz bounds}
\label{app:input-lipschitz-bounds}
We now derive Lipschitz bounds with respect to perturbations of the node features and the weighted adjacency matrix. 
For this, let $\mathcal{D}\subset \mathcal{S}$ be a subset of the state space such that there exist constants $a,c\geq 0$ satisfying
\begin{align*}
\norm*{A^{(i)}(E)} \leq a, \qquad 
\norm*{A^{(i)}(E)-A^{(i)}(\overline E)} \leq c \norm*{E-\overline E}
\end{align*}
for all $i\in\{1,\ldots,k\}$ and all $s=(X,E),\overline{s}=(\overline X,\overline E)\in\mathcal{D}$.

For sum aggregation, such constants exist on domains with uniformly bounded weighted adjacency matrices by Propositions~\ref{prop:operatornorm-aggregate} and~\ref{prop:lipschitz-aggregation-map}. For mean aggregation, the same holds on such domains with fixed edge support.
Further, let $f_{\max}$ be a finite uniform entrywise bound on the initial node features over $\mathcal D$.

\begin{proposition}[Input Lipschitz bound for the MPNN]\label{prop:input-lipschitz-GNN}
Fix parameters satisfying Assumption~\ref{assum:bounds}. Define constants $B^{\operatorname{in}}_{X,0}:=1$ and $B^{\operatorname{in}}_{E,0}:=0$, and, for $j\geq 1$,
\begin{align*}
B^{\operatorname{in}}_{X,j} &:=\sqrt{s_1}\beta_j a B^{\operatorname{in}}_{X,j-1},\\ 
B^{\operatorname{in}}_{E,j} &:=\sqrt{s_1}\beta_j\left(a B^{\operatorname{in}}_{E,j-1}+c \sqrt{s_0nd_{j-1}}\right).
\end{align*}
Then, for all $s=(X,E),\overline{s}=(\overline X,\overline E)\in\mathcal{D}$,
\begin{align*}
\norm*{f^{k}(\theta^k,X,E)-f^{k}(\theta^k,\overline X,\overline E)}
\leq B^{\operatorname{in}}_{X,k}\norm*{X-\overline X}_F
+ B^{\operatorname{in}}_{E,k}\norm*{E-\overline E}.
\end{align*}
In particular,
\begin{align*}
\norm*{f^{k}(\theta^k,X,E)-f^{k}(\theta^k,\overline X,\overline E)}
\leq\sqrt{\left(B^{\operatorname{in}}_{X,k}\right)^2+\left(B^{\operatorname{in}}_{E,k}\right)^2}
\sqrt{\norm*{X-\overline X}_F^2+\norm*{E-\overline E}^2}.
\end{align*}
\end{proposition}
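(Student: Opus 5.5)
We argue by induction on the number of layers $k$, following the same layer-wise strategy as in Proposition~\ref{prop:lipschitz-GNN}. The parameters $\theta^k$ stay fixed throughout, and only the inputs $(X,E)$ and $(\overline X,\overline E)$ vary. Write $x^{(j)}:=f^j(\theta^j,X,E)$ and $\bar x^{(j)}:=f^j(\theta^j,\overline X,\overline E)$.

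\textbf{Base case.} For $k=0$ we have $f^0=x^{(0)}$. Since the Euclidean norm of the stacked vector $x^{(0)}$ equals the Frobenius norm of $X$, we get $\norm*{x^{(0)}-\bar x^{(0)}}=\norm*{X-\overline X}_F$. This matches $B^{\operatorname{in}}_{X,0}=1$ and $B^{\operatorname{in}}_{E,0}=0$.

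\textbf{Induction step.} Assume the claim holds for $j-1$ layers. Since $\sigma$ is Lipschitz with constant $\sqrt{s_1}$ and is applied componentwise,
\[
\norm*{x^{(j)}-\bar x^{(j)}}\le \sqrt{s_1}\,\norm*{\bigl(I_n\otimes W^{(j)}\bigr)\bigl(A^{(j)}(E)x^{(j-1)}-A^{(j)}(\overline E)\bar x^{(j-1)}\bigr)}.
\]
The bias terms cancel because the parameters are shared. Next we use $\norm*{I_n\otimes W^{(j)}}=\norm*{W^{(j)}}\le\beta_j$ and split the difference by adding and subtracting $A^{(j)}(E)\bar x^{(j-1)}$:
\[
A^{(j)}(E)\bigl(x^{(j-1)}-\bar x^{(j-1)}\bigr)+\bigl(A^{(j)}(E)-A^{(j)}(\overline E)\bigr)\bar x^{(j-1)}.
\]
The first term is bounded by $a\norm*{x^{(j-1)}-\bar x^{(j-1)}}$. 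The second is bounded by $c\norm*{E-\overline E}\,\norm*{\bar x^{(j-1)}}$.

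\textbf{Feature bound.} The remaining ingredient is $\norm*{\bar x^{(j-1)}}\le\sqrt{s_0nd_{j-1}}$. For $j-1\ge1$ this holds because $\bar x^{(j-1)}$ is a vector of $nd_{j-1}$ entries, each bounded by $\norm*{\sigma}_\infty$. For $j-1=0$ it holds because every entry of $\overline X$ is bounded by $f_{\max}$ uniformly on $\mathcal D$; this is where uniformity over $\mathcal D$ is needed. In both cases the bound follows from the definition of $s_0$.

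\textbf{Closing the induction.} Inserting the induction hypothesis for $\norm*{x^{(j-1)}-\bar x^{(j-1)}}$ and collecting the coefficients of $\norm*{X-\overline X}_F$ and $\norm*{E-\overline E}$ gives exactly the recursions for $B^{\operatorname{in}}_{X,j}$ and $B^{\operatorname{in}}_{E,j}$. The combined bound then follows from Cauchy--Schwarz in $\R^2$:
\[
\alpha u+\beta v\le\sqrt{\alpha^2+\beta^2}\sqrt{u^2+v^2}.
\]

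The main point to handle carefully is the add-and-subtract split. The aggregation operator must be evaluated at the unperturbed $E$ on the feature difference, and the operator difference must be paired with a \emph{uniformly bounded} feature vector, either $\bar x^{(j-1)}$ or the initial features. Making this pairing in the other order would introduce products of perturbations, so the chosen order is what makes the recursion linear. No differentiability is needed here, only Lipschitz continuity of $\sigma$.
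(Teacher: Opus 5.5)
Your proof is correct and follows the paper's argument step for step: induction on the number of layers, the Lipschitz constant $\sqrt{s_1}$ of $\sigma$, cancellation of the biases, the split $A^{(j)}(E)\bigl(x^{(j-1)}-\bar x^{(j-1)}\bigr)+\bigl(A^{(j)}(E)-A^{(j)}(\overline E)\bigr)\bar x^{(j-1)}$, and the feature bound $\norm*{\bar x^{(j-1)}}^2\le s_0nd_{j-1}$. You justify that feature bound explicitly, including the role of uniformity of $f_{\max}$ over $\mathcal D$, whereas the paper uses it implicitly. One small caveat on your closing remark: the symmetric split $A^{(j)}(\overline E)\bigl(x^{(j-1)}-\bar x^{(j-1)}\bigr)+\bigl(A^{(j)}(E)-A^{(j)}(\overline E)\bigr)x^{(j-1)}$ works equally well, because both states lie in $\mathcal D$, so evaluating the operator at the unperturbed $E$ is a convenience rather than a necessity.
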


\begin{proof}
We prove the claim by induction over $k$. For $k=0$, we have $f^0(\theta^0,X,E)=X$. Hence,
\begin{align*}
\norm*{f^0(\theta^0,X,E)-f^0(\theta^0,\overline X,\overline E)}
= \norm*{X-\overline X}_F,
\end{align*}
which proves the claim for $B^{\operatorname{in}}_{X,0}$ and $B^{\operatorname{in}}_{E,0}$. We now assume that the claim holds up to layer $j-1$ and obtain with $A^{(j)}:=A^{(j)}(E), \overline A^{(j)}:=A^{(j)}(\overline E)$ that
\begin{align*}
\bigl\|f^j(\theta^j,X,E)&-f^j(\theta^j,\overline X,\overline E)\bigr\| \\
&\leq \sqrt{s_1}\norm*{(I_n\otimes W^{(j)})\left(A^{(j)}x^{(j-1)}-\overline A^{(j)}\overline{x}^{(j-1)}\right)}\\
&\leq \sqrt{s_1}\beta_j \left( \norm*{A^{(j)}}\norm*{x^{(j-1)}-\overline{x}^{(j-1)}} + \norm*{A^{(j)}-\overline A^{(j)}}\norm*{\overline{x}^{(j-1)}}\right) \\
&\leq \sqrt{s_1}\beta_j \left(a \left(B^{\operatorname{in}}_{X,j-1} \norm*{X-\overline X}_F + B^{\operatorname{in}}_{E,j-1} \norm*{E-\overline E}\right) + c \norm*{E-\overline E}\norm*{\overline{x}^{(j-1)}}  \right) \\
&\leq B^{\operatorname{in}}_{X,j} \norm*{X-\overline X}_F + B^{\operatorname{in}}_{E,j} \norm*{E-\overline E}.
\end{align*}
This proves the claim.
\end{proof}

\begin{proof}[Proof of Theorem~\ref{theo:input-lipschitz-GNN}]
Solving the recursive formulas given by Proposition~\ref{prop:input-lipschitz-GNN}, we have
\begin{align*}
B^{\operatorname{in}}_{X,k}=\prod_{i=1}^k \sqrt{s_1}\beta_i a
\leq \left(q_{\leq k}^{\operatorname{in}}\right)^k,
\end{align*}
as $B^{\operatorname{in}}_{X,0}=1$. Further, since
\begin{align*}
B^{\operatorname{in}}_{E,j}\leq q_{\leq k}^{\operatorname{in}}B^{\operatorname{in}}_{E,j-1}+\alpha_{\leq k}^{\operatorname{in}},
\end{align*}
we obtain with the same methodology as in the proof of Theorem~\ref{theo:explicit-bounds} that
\begin{align*}
B^{\operatorname{in}}_{E,k}\leq\alpha_{\leq k}^{\operatorname{in}}S_k(q_{\leq k}^{\operatorname{in}}).
\end{align*}
This proves the claim.
\end{proof}

Before proving Theorem~\ref{theo:input-lipschitz-policy}, we note that the only remaining step is to estimate the Lipschitz constants of the policy heads with respect to the final embedding $x^{(K)}$.

\begin{proof}[Proof of Theorem~\ref{theo:input-lipschitz-policy}]

For the global head, using that the softmax is globally $1/2$-Lipschitz with respect to the Euclidean norm \cite{Lipschitz_bound_softmax} and that $\norm*{\read}^2=n$, we obtain
\begin{align*}
\left|\pi^G_\theta(a_{\mathrm{glob}}\mid s)-\pi^G_\theta(a_{\mathrm{glob}}\mid \overline{s})\right|
&\leq \frac12\norm*{W^G\read(x^{(K)}-\overline{x}^{(K)})}\\
&\leq \frac12\beta_G\sqrt n\norm*{x^{(K)}-\overline{x}^{(K)}}.
\end{align*}
As the masks are fixed on $\mathcal D$, we obtain for the local heads,
\begin{align*}
\sum_{i=1}^n \left|\pi^i_\theta(a_i\mid s)-\pi^i_\theta(a_i\mid \overline{s})\right|
&\leq \frac12\beta_N \sum_{i=1}^n \norm*{P_i(x^{(K)}-\overline{x}^{(K)})}\\
&\leq \frac12\beta_N\sqrt n\norm*{x^{(K)}-\overline{x}^{(K)}}.
\end{align*}
If $\eta(a_{\mathrm{glob}})=0$, the policy only depends on the global head. If $\eta(a_{\mathrm{glob}})=1$, as the probabilities are bounded by one, we obtain
\begin{align*}
\left|\pi_\theta(a\mid s)-\pi_\theta(a\mid \overline{s})\right|
&\leq \left|\pi^G_\theta(a_{\mathrm{glob}}\mid s)-\pi^G_\theta(a_{\mathrm{glob}}\mid \overline{s})\right|\\
&\quad+\eta(a_{\mathrm{glob}}) \sum_{i=1}^n \left|\pi^i_\theta(a_i\mid s)-\pi^i_\theta(a_i\mid \overline{s})\right|\\
&\leq \frac{\sqrt n}{2} \left( \beta_G+\eta(a_{\mathrm{glob}})\beta_N \right) \norm*{x^{(K)}-\overline{x}^{(K)}}.
\end{align*}
Theorem~\ref{theo:input-lipschitz-GNN} now gives the claim.
\end{proof}

\subsubsection{Score-function bounds}
\label{app:score-function-bounds}

\noindent\textbf{Policy-head estimates}: To prove Theorem~\ref{theo:score-explicit}, we first derive layer-wise bounds for the policy heads in the next two lemmas.

\begin{lemma}[Lipschitz bound for the global head]
\label{lemm:lipschitz-graph-head}
The gradient of the logarithmic global head is Lipschitz continuous in both the head parameters and the input, and it is uniformly bounded.
Defining
\begin{align*}
{{}_xL_{\tau}^G}^2 :=& \frac{3}{2}s_0 \beta_G^2 n^3{d_{K}}+ 4n,\\
{{}_{x}L_{x}^G}^2 :=& \frac{3}{2} \beta_G^4 n^2,\\
{{}_{\tau}L_{\tau}^G}^2 :=& \frac{3}{2}s_0 n^2{d_{K}}(s_0n^2{d_{K}}+1),  \\
{{}_{\tau}L_{x}^G}^2 :=& \frac{3}{2} \beta_G^2n (s_0n^2{d_{K}}+1)+4n,\\   
{B_{G}^2}:=& 2 \beta_G^2n, \\
{B_{\tau^G}^2} :=& 2 \left( s_0 n^2d_K+1\right)
\end{align*}
the following inequalities hold for all parameter pairs $\tau^G,\overline{\tau}^G$ satisfying the weight bounds in Assumption~\ref{assum:bounds} and all inputs $x,\overline{x}$ with $\max\{\norm*{x}^2,\norm*{\overline{x}}^2\}\leq s_0nd_K$.
\begin{align*}
\norm*{D_{\tau^G} \log\pi_{\tau}^G (a_{\mathrm{glob}} \mid s) - D_{\tau^G} \log\pi_{\overline{\tau}}^G (a_{\mathrm{glob}} \mid s)}^2
&\leq {{}_{\tau}L_{\tau}^G}^2 \norm*{\tau^G-\overline{\tau}^{G}}^2  + {{}_{\tau}L_{x}^G}^2 \norm*{x-\overline{x}}^2, \\
\norm*{D_{x} \log\pi_{\tau}^G (a_{\mathrm{glob}} \mid s) - D_{x} \log\pi_{\overline{\tau}}^G (a_{\mathrm{glob}} \mid s)}^2
&\leq {{}_{x}L_{\tau}^G}^2 \norm*{\tau^G-\overline{\tau}^{G}}^2  + {{}_{x}L_{x}^G}^2 \norm*{x-\overline{x}}^2, \\
\norm*{D_{x} \log\pi_{\tau}^G (a_{\mathrm{glob}} \mid s)}^2 &\leq {B_{G}^2}, \\
\norm*{D_{\tau^G} \log\pi_{\tau}^G (a_{\mathrm{glob}} \mid s)}^2 &\leq {B_{\tau^G}^2}.
\end{align*}
\end{lemma}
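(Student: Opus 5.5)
Everything follows from the explicit derivative formulas \eqref{eq:graph-head-input} and \eqref{eq:graph-head-parameter}. Write $p(\tau,x):=\pi^G_\theta(\cdot\mid s)=\operatorname{softmax}(W^G\read x+b^G)$ and $u:=e_{a_{\mathrm{glob}}}-p$. Then
$$D_x\log\pi^G=u^\top W^G\read,\qquad D_{\tau^G}\log\pi^G=u^\top\bigl(I_{K_G}\otimes[(\read x)^\top\,1]\bigr).$$
The ingredients are elementary. Since $e_a$ and $p$ are both probability vectors, $\norm*{u}^2\le 2$. We also have $\norm*{\read}^2=n$ and $\norm*{\read x}^2\le n\norm*{x}^2\le s_0n^2d_K$. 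The Kronecker identity $\norm*{I\otimes B}=\norm*{B}$ gives $\norm*{[(\read x)^\top\,1]}^2=\norm*{\read x}^2+1$. Finally, softmax is $\tfrac12$-Lipschitz, as cited in the proof of Theorem~\ref{theo:input-lipschitz-policy}, so $\norm*{p-\bar p}\le\frac12\norm*{z^G-\bar z^G}$.

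\textbf{Uniform bounds.} I would prove these first. From the formulas, $\norm*{D_x\log\pi^G}^2\le 2\beta_G^2 n=B_G^2$ and $\norm*{D_{\tau^G}\log\pi^G}^2\le 2(s_0n^2d_K+1)=B_{\tau^G}^2$.

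\textbf{Logit difference.} Next I would bound the change in logits. Writing
$$z^G-\bar z^G=(W^G-\bar W^G)\read x+\bar W^G\read(x-\bar x)+(b^G-\bar b^G),$$
I apply $(a+b+c)^2\le 3(a^2+b^2+c^2)$. Merging the weight and bias parts into $\norm*{\tau^G-\bar\tau^G}^2$ (using $\max\{s_0n^2d_K,1\}$ with $s_0d_K\ge1$) gives
$$\norm*{z^G-\bar z^G}^2\le 3\bigl(s_0n^2d_K\,\norm*{\tau^G-\bar\tau^G}^2+\beta_G^2 n\,\norm*{x-\bar x}^2\bigr),$$
and hence
$$\norm*{u-\bar u}^2=\norm*{p-\bar p}^2\le\tfrac34\bigl(s_0n^2d_K\,\norm*{\tau^G-\bar\tau^G}^2+\beta_G^2n\,\norm*{x-\bar x}^2\bigr).$$
The intended constants seem to allow a slightly cruder split into two terms (weight and bias together, and input), so I will choose whichever grouping reproduces the stated factors.

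\textbf{Lipschitz estimates.} For each derivative I would use the product splitting $\norm*{u^\top M-\bar u^\top\bar M}^2\le 2\norm*{u-\bar u}^2\norm*{M}^2+2\norm*{\bar u}^2\norm*{M-\bar M}^2$.

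For $D_x$, take $M=W^G\read$. The first term gives $2\cdot\tfrac34\beta_G^2 n$ times the logit bound. The second gives $2\cdot2\cdot n\norm*{W^G-\bar W^G}^2\le 4n\norm*{\tau^G-\bar\tau^G}^2$. Together these produce ${}_xL^G_\tau{}^2=\tfrac32 s_0\beta_G^2n^3d_K+4n$ and ${}_xL^G_x{}^2=\tfrac32\beta_G^4n^2$.

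For $D_{\tau^G}$, take $M=I\otimes[(\read x)^\top\,1]$, so that $\norm*{M-\bar M}^2=\norm*{\read(x-\bar x)}^2\le n\norm*{x-\bar x}^2$. The first term gives $\tfrac32(s_0n^2d_K+1)$ times the logit bound, and the second gives $4n\norm*{x-\bar x}^2$. This yields ${}_\tau L^G_\tau{}^2=\tfrac32 s_0n^2d_K(s_0n^2d_K+1)$ and ${}_\tau L^G_x{}^2=\tfrac32\beta_G^2 n(s_0n^2d_K+1)+4n$.

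The only real obstacle is bookkeeping: matching the exact numerical constants requires the right grouping of the three-term logit difference and consistent use of $\norm*{\bar u}^2\le2$. No step is conceptually hard.
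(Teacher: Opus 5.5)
Your proposal is correct and follows essentially the same route as the paper. Both use the explicit derivative formulas, the product splitting with $\|e_{a_{\mathrm{glob}}}-\pi^G\|^2\le 2$, the $\tfrac12$-Lipschitz softmax, and the three-term logit bound via $(a+b+c)^2\le 3(a^2+b^2+c^2)$; the only difference is which iterate carries the bar in each splitting. Your three-term grouping already reproduces all six stated constants exactly, so the hedge about choosing an alternative grouping is unnecessary.
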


\begin{proof}
Let $\tau^G,\overline{\tau}^{G}$ be two different sets of parameters for the global head, as well as $x,\overline{x}$ two different inputs. By \eqref{eq:graph-head-parameter} we have
\begin{align*}
\bigl\|D_{\tau^G}& \log\pi_{\tau}^G (a_{\mathrm{glob}} \mid s) - D_{\tau^G} \log\pi_{\overline{\tau}}^G (a_{\mathrm{glob}} \mid s)\bigr\|^2 \\
=& \biggl\|\left(e_{a_{\mathrm{glob}}} - \pi_{\tau}^G(\cdot \mid s)\right)^\top (I_{K_G} \otimes [(\read{x})^\top \, 1]) - \left(e_{a_{\mathrm{glob}}} - \pi_{\overline{\tau}}^G(\cdot \mid s)\right)^\top (I_{K_G} \otimes [(\read{\overline{x}})^\top\, 1])\biggr\|^2 \\
\leq& 2\norm*{\pi_{\tau}^G(\cdot \mid s) - \pi_{\overline{\tau}}^G(\cdot \mid s)}^2 \norm*{[(\read\overline{x})^\top\,1]}^2 + 2\norm*{e_{a_{\mathrm{glob}}} - \pi_{\tau}^G(\cdot \mid s)}^2 \norm*{\read(x-\overline{x})}^2.
\end{align*}
For the first term, using the global $1/2$-Lipschitz bound for the softmax, we bound the change in the softmax probabilities as follows.
\begin{align*}
\|\pi^G_\tau(\cdot \mid s)-\pi^G_{\overline{\tau}}(\cdot \mid s)\|^2 
\leq& \frac{1}{4} \norm*{U^G(\tau,x)-U^G(\overline{\tau},\overline{x})}^2\\
\leq& \frac{1}{4}\left(\norm*{W^G-\overline W^G}\norm*{\read}\norm*{\overline{x}} + \norm*{b^G-\overline b^G} + \norm*{W^G}\norm*{\read}\norm*{x-\overline{x}}\right)^2. 
\end{align*}
Combining these estimates with $\norm*{e_{a_{\mathrm{glob}}} - \pi_{\tau}^G(\cdot \mid s)}^2 \leq 2$ yields
\begin{align*}
\bigl\|D_{\tau^G}& \log\pi_{\tau}^G (a_{\mathrm{glob}} \mid s) - D_{\tau^G} \log\pi_{\overline{\tau}}^G (a_{\mathrm{glob}} \mid s)\bigr\|^2 \\
\leq& \frac{3}{2} \norm*{[{(\read \overline{x})}^\top\,1]}^2\biggl(\norm*{W^G-\overline W^G}^2\norm*{\read}^2\norm*{\overline{x}}^2 \\
&+ \norm*{b^G-\overline b^G}^2 + \norm*{W^G}^2\norm*{\read}^2\norm*{x-\overline{x}}^2\biggr) +4 \norm*{\read}^2\norm*{x-\overline{x}}^2 \\
\leq&  \frac{3}{2} \norm*{[{(\read \overline{x})}^\top\,1]}^2 \max\left\{\norm*{\read}^2\norm*{\overline{x}}^2 ,1\right\} \norm*{\tau^G-\overline{\tau}^{G}}^2 \\
&+\left(\frac{3}{2} \norm*{[{(\read \overline{x})}^\top\,1]}^2\norm*{W^G}^2\norm*{\read}^2 + 4 \norm*{\read}^2 \right)\norm*{x-\overline{x}}^2\\
\leq& {{}_{\tau}L_\tau^G}^2 \norm*{\tau^G-\overline{\tau}^{G}}^2 + {{}_{\tau}L_{x}^G}^2 \norm*{x-\overline{x}}^2,
\end{align*}
where we used that
\begin{align*}
\norm*{\read}^2 =&  \norm*{\mathds{1}_n^\top \otimes I_{d_{K}}}^2 =n.
\end{align*}

Similarly, by \eqref{eq:graph-head-input},
\begin{align*}
\bigl\|D_{x}& \log\pi_{\tau}^G (a_{\mathrm{glob}} \mid s) - D_ {x} \log\pi_{\overline{\tau}}^G (a_{\mathrm{glob}} \mid s)\bigr\|^2  \\
=& \norm*{\left(e_{a_{\mathrm{glob}}} - \pi_{\tau}^G(\cdot \mid s)\right)^\top W^G\read - \left(e_{a_{\mathrm{glob}}} - \pi_{\overline{\tau}}^G(\cdot \mid s)\right)^\top \overline W^G\read}^2 \\
\leq& 2 \norm*{\pi_{\tau}^G(\cdot \mid s) - \pi_{\overline{\tau}}^G(\cdot \mid s)}^2\norm*{\overline W^G}^2\norm*{\read}^2 + 4\norm*{W^G-\overline W^G}^2\norm*{\read}^2 \\
\leq& 6\norm*{\overline W^G}^2\norm*{\read}^2 \frac{1}{4}\bigl(\norm*{W^G-\overline W^G}^2\norm*{\read}^2\norm*{\overline{x}}^2 \\
&+ \norm*{b^G-\overline b^G}^2 + \norm*{W^G}^2\norm*{\read}^2\norm*{x-\overline{x}}^2\bigr) + 4\norm*{W^G-\overline W^G}^2\norm*{\read}^2 \\
\leq& \max\bigl\{\frac{3}{2}\norm*{\overline W^G}^2\norm*{\read}^4 \norm*{\overline{x}}^2 + 4 \norm*{\read}^2,  \\
&\quad \frac{3}{2} \norm*{\overline W^G}^2 \norm*{\read}^2\bigr\}\norm*{\tau^G-\overline{\tau}^{G}}^2\\
&+ \frac{3}{2}\norm*{\overline W^G}^2\norm*{\read}^2 \norm*{W^G}^2\norm*{\read}^2\norm*{x-\overline{x}}^2\\
\leq& {{}_xL_{\tau}^G}^2 \norm*{\tau^G-\overline{\tau}^{G}}^2 + {{}_{x}L_{x}^G}^2\norm*{x-\overline{x}}^2.
\end{align*}
Finally, the uniform bound on the input derivative follows from
\begin{align*}
\norm*{D_{x} \log\pi_\tau^G(a_{\mathrm{glob}} \mid s)}^2 \leq 2 \norm*{W^G}^2 \norm*{\read}^2 \leq B_{G}^2
\end{align*}
and
\begin{align*}
\norm*{D_{\tau^G} \log\pi_\tau^G(a_{\mathrm{glob}} \mid s)}^2
\leq 2 \left(\norm*{\read}^2\norm*{x}^2 +1\right)
\leq B_{\tau^G}^2.
\end{align*}
\end{proof}

\begin{lemma}[Lipschitz bound for the local heads]
\label{lemm:lipschitz-node}
The gradient of each logarithmic local head is Lipschitz continuous in both the head parameters and the input, and it is uniformly bounded.
Defining
\begin{align*}
{{}_{x}L_{x}^N}^2 :=& \frac{3}{2} \beta_N^4,\\
{{}_{x}L_{\tau}^N}^2 :=& \frac{3}{2} s_0\beta_N^2{d_{K}} +4, \\
{{}_{\tau}L_{\tau}^N}^2 :=& \frac{3}{2}s_0 {d_{K}} \left( s_0 {d_{K}} +1 \right), \\
{{}_{\tau}L_{x}^N}^2 :=& \frac{3}{2}\beta_N^2 \left( s_0 {d_{K}} +1 \right) + 4, \\
{B_{N}^2} :=& 2 \beta_N^2,\\
{B_{\tau^N}^2} :=& 2 (s_0d_K+1)
\end{align*}
the following inequalities hold for each node $i$, all parameter pairs $\tau^N,\overline{\tau}^N$ satisfying the weight bounds in Assumption~\ref{assum:bounds} and all inputs $x,\overline{x}$ with $\max\{\norm*{P_i x}^2,\norm*{P_i\overline{x}}^2\}\leq s_0d_K$.
\begin{align*}
\norm*{D_{\tau^N} \log\pi_{\tau}^i (a_i \mid s) - D_{\tau^N} \log\pi_{\overline{\tau}}^i (a_i \mid s)}^2
&\leq {{}_{\tau}L_{\tau}^N}^2 \norm*{\tau^N-\overline{\tau}^{N}}^2 + {{}_{\tau}L_{x}^N}^2 \norm*{x-\overline{x}}^2, \\
\norm*{D_x \log\pi_{\tau}^i (a_i \mid s) - D_x \log\pi_{\overline{\tau}}^i (a_i \mid s)}^2
&\leq {{}_{x}L_{\tau}^N}^2 \norm*{\tau^N-\overline{\tau}^{N}}^2 + {{}_{x}L_{x}^N}^2 \norm*{x-\overline{x}}^2, \\
\norm*{D_x \log\pi_{\tau}^i (a_i \mid s)}^2 &\leq B_N^2, \\
\norm*{D_{\tau^N} \log\pi_{\tau}^i (a_i \mid s)}^2 &\leq B_{\tau^N}^2.
\end{align*}
\end{lemma}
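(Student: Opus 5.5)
The plan is to copy the proof of Lemma~\ref{lemm:lipschitz-graph-head}, replacing the readout matrix $\read$ by the node projection $P_i$. The key difference is that $\norm*{P_i}=1$ instead of $\norm*{\read}^2=n$. In addition, the relevant input norm is $\norm*{P_ix}^2\leq s_0d_K$ instead of $\norm*{x}^2\leq s_0nd_K$. Substituting these two facts turns every constant of the global lemma into the stated local one: the powers of $n$ disappear and $s_0 n^2 d_K$ becomes $s_0 d_K$. Since the state-action pair is fixed, the mask only deletes coordinates of the logit vector. The masked softmax restricted to the admissible coordinates is again a softmax on a smaller simplex, so it is still globally $1/2$-Lipschitz, and $\norm*{e_{a_i}-\pi^i_\tau(\cdot\mid s)}^2\leq 2$ continues to hold.

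\textbf{Parameter derivative.} Start from \eqref{eq:node-parameter}. Add and subtract the mixed term and use the elementary inequality $\norm*{u+v}^2\leq 2\norm*{u}^2+2\norm*{v}^2$. This bounds the difference of parameter derivatives by
\[
2\norm*{\pi^i_\tau-\pi^i_{\bar\tau}}^2\norm*{[(P_i\bar x)^\top\,1]}^2+2\norm*{e_{a_i}-\pi^i_\tau}^2\norm*{P_i(x-\bar x)}^2 .
\]
For the softmax difference, apply the $1/2$-Lipschitz bound of \cite{Lipschitz_bound_softmax}:
\[
\norm*{\pi^i_\tau-\pi^i_{\bar\tau}}^2\leq\tfrac14\left(\norm*{W^N-\bar W^N}\norm*{P_i\bar x}+\norm*{b^N-\bar b^N}+\norm*{W^N}\norm*{P_i(x-\bar x)}\right)^2\leq\tfrac34(\cdots),
\]
where the last step is Cauchy--Schwarz on the three terms. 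Then insert $\norm*{[(P_i\bar x)^\top\,1]}^2\leq s_0d_K+1$, $\norm*{P_i\bar x}^2\leq s_0d_K$ and $\norm*{W^N}\leq\beta_N$. Taking the maximum of the coefficients of $\norm*{W^N-\bar W^N}^2$ and $\norm*{b^N-\bar b^N}^2$ gives ${{}_\tau L^N_\tau}^2=\frac32 s_0d_K(s_0d_K+1)$; here $s_0d_K\geq1$ ensures that the weight term dominates. The coefficient of $\norm*{x-\bar x}^2$ is $\frac32\beta_N^2(s_0d_K+1)+4$.

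\textbf{Input derivative.} Start from \eqref{eq:node-input} and split it analogously into
\[
2\norm*{\pi^i_\tau-\pi^i_{\bar\tau}}^2\norm*{\bar W^N}^2+4\norm*{W^N-\bar W^N}^2 .
\]
Reuse the softmax estimate. Collecting the parameter coefficients gives $\max\{\frac32\beta_N^2 s_0d_K+4,\ \frac32\beta_N^2\}$, which is bounded by ${{}_xL^N_\tau}^2$. The input coefficient is $\frac32\beta_N^4$.

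\textbf{Uniform bounds.} These follow directly from $\norm*{e_{a_i}-\pi}^2\leq2$. This gives $\norm*{D_x\log\pi^i_\tau}^2\leq 2\beta_N^2\norm*{P_i}^2=2\beta_N^2$ and $\norm*{D_{\tau^N}\log\pi^i_\tau}^2\leq 2(\norm*{P_ix}^2+1)\leq 2(s_0d_K+1)$.

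The only delicate point is justifying that masking does not affect the softmax Lipschitz constant or the bound on $\norm*{e_{a_i}-\pi}$. This is handled by regarding the masked softmax as an ordinary softmax on the admissible coordinates and padding it with zeros, since the fixed state makes the mask constant. The remaining work is bookkeeping identical to the global head.
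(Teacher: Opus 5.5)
Your proposal is correct and follows the paper's proof essentially line by line. It uses the same add-and-subtract splitting of \eqref{eq:node-parameter} and \eqref{eq:node-input}, the global $1/2$-Lipschitz softmax bound with the factor $3$ from Cauchy--Schwarz, and the facts $\norm*{P_i}=1$, $\norm*{P_i\bar x}^2\leq s_0d_K$ and $s_0d_K\geq 1$ to collect exactly the stated constants. Your treatment of the fixed mask as an ordinary softmax on the admissible coordinates, padded with zeros, is simply a more explicit version of the paper's remark that masking only changes the size of the action space.
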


\begin{proof}
Without loss of generality, we consider $i=1$. Let $\tau^N,\overline{\tau}^{N}$ be two different sets of parameters for the local head, as well as $x,\overline{x}$ two different inputs. By~\eqref{eq:node-parameter} we have
\begin{align*}
\bigl\|D_{\tau^N} \log\pi_{\tau}^1 (a_1 \mid s) -& D_{\tau^N} \log\pi_{\overline{\tau}}^1 (a_1 \mid s)\bigr\|^2 \\
=& \biggl\|\big(e_{a_1} - \pi_{\tau}^1(\cdot \mid s)\big)^\top \big(I_{K_{\mathrm{node}}} \otimes [x_1^\top \, 1]\big) \\
&- \big(e_{a_1} - \pi_{\overline{\tau}}^1(\cdot \mid s)\big)^\top \big(I_{K_{\mathrm{node}}} \otimes [\overline{x}_1^\top \,1]\big)\biggr\|^2 \\
\leq& 2\norm*{\pi_{\overline{\tau}}^1(\cdot \mid s) - \pi_{\tau}^1(\cdot \mid s)}^2 \norm*{I_{K_{\mathrm{node}}} \otimes [\overline{x}_1^\top\,1]}^2 \\
&+ 2\norm*{e_{a_1} - \pi_{\tau}^1(\cdot \mid s)}^2 \norm*{(I_{K_{\mathrm{node}}} \otimes [x_1^\top\,1]) - (I_{K_{\mathrm{node}}} \otimes [\overline{x}_1^\top\,1])}^2 \\
\leq& 2\norm*{\pi_{\tau}^1(\cdot \mid s) - \pi_{\overline{\tau}}^1(\cdot \mid s)}^2 \norm*{[\overline{x}_1^\top\,1]}^2 \\
&+ 4 \norm*{P_1}^2 \norm*{x-\overline{x}}^2,
\end{align*}
where we used $\norm*{e_{a_1} - \pi_{\tau}^1(\cdot \mid s)}^2 \leq 2$ and the norm identity for Kronecker products.

Using $\norm*{P_1}^2=1$ and the global $1/2$-Lipschitz bound for the softmax, we proceed analogously to the global head case. 
\begin{align*}
\norm*{\pi^1_\tau(\cdot \mid s)-\pi^1_{\overline{\tau}}(\cdot \mid s)}^2
\leq& \frac{1}{4} \norm*{U^{N}(\tau^N,x)-U^{N}(\overline{\tau}^{N},\overline{x})}^2\\
=& \frac{1}{4} \norm*{W^N P_1 x + b^N - \overline W^N P_1 \overline{x} - \overline b^N}^2\\
\leq& \frac{1}{4} \left(\norm*{W^N-\overline W^N}\norm*{\overline{x}_1} + \norm*{b^N-\overline b^N} + \norm*{W^N}\norm*{x-\overline{x}}\right)^2.
\end{align*}
Collecting the above estimates yields
\begin{align*}
\bigl\|D_{\tau^N} \log\pi_{\tau}^1 (a_1 \mid s) -& D_{\tau^N} \log\pi_{\overline{\tau}}^1 (a_1 \mid s)\bigr\|^2 \\
\leq& \frac{3}{2}\norm*{[\overline{x}_1^\top\,1]}^2 \Bigl(\norm*{W^N-\overline W^N}^2\norm*{\overline{x}_1}^2 \\
&+ \norm*{b^N-\overline b^N}^2 + \norm*{W^N}^2\norm*{x-\overline{x}}^2\Bigr) + 4\norm*{x-\overline{x}}^2 \\
\leq& \frac{3}{2}\norm*{[\overline{x}_1^\top\,1]}^2 \max\left\{\norm*{\overline{x}_1}^2,1\right\} \norm*{\tau^N-\overline{\tau}^{N}}^2 \\
&+\left(\frac{3}{2}\norm*{[\overline{x}_1^\top\,1]}^2 \norm*{W^N}^2+4\right)\norm*{x-\overline{x}}^2\\ 
\leq& {{}_{\tau}L_\tau^N}^2 \norm*{\tau^N-\overline{\tau}^{N}}^2 + {{}_{\tau}L_x^N}^2 \norm*{x-\overline{x}}^2,
\end{align*}
where we used that $\norm*{[\overline{x}_1^\top\,1]}^2\leq s_0d_K+1$ and $\max\{\norm*{\overline{x}_1}^2,1\}\leq s_0d_K$, since $s_0d_K\geq 1$.

Similarly, using \eqref{eq:node-input}, we obtain
\begin{align*}
\bigl\|D_x \log\pi_{\tau}^1 (a_1 \mid s) -& D_x \log\pi_{\overline{\tau}}^1 (a_1 \mid s)\bigr\|^2 \\
=& \norm*{\big(e_{a_1} - \pi_{\tau}^1(\cdot \mid s)\big)^\top W^N P_1 - \big(e_{a_1} - \pi_{\overline{\tau}}^1(\cdot \mid s)\big)^\top \overline W^N P_1}^2 \\
\leq& 2\norm*{\pi_{\tau}^1(\cdot \mid s) - \pi_{\overline{\tau}}^1(\cdot \mid s)}^2 \norm*{\overline W^N}^2 + 4\norm*{W^N-\overline W^N}^2 \\
\leq& \frac{3}{2}\norm*{\overline W^N}^2 \Bigl(\norm*{W^N-\overline W^N}^2\norm*{\overline{x}_1}^2 \\
&+ \norm*{b^N-\overline b^N}^2 + \norm*{W^N}^2\norm*{x-\overline{x}}^2\Bigr) + 4\norm*{W^N-\overline W^N}^2 \\
\leq& \max\Bigl\{\frac{3}{2}\norm*{\overline W^N}^2\norm*{\overline{x}_1}^2+4,\,\frac{3}{2}\norm*{\overline W^N}^2\Bigr\}\norm*{\tau^N-\overline{\tau}^{N}}^2\\
&+ \frac{3}{2}\norm*{\overline W^N}^2 \norm*{W^N}^2\norm*{x-\overline{x}}^2 \\
\leq& {{}_xL_{\tau}^N}^2 \norm*{\tau^N-\overline{\tau}^{N}}^2+ {{}_xL_x^N}^2 \norm*{x-\overline{x}}^2.
\end{align*}
Here, we again used $\norm*{e_{a_1}-\pi_{\tau}^1(\cdot\mid s)}^2\leq 2$ and $\norm*{P_1}^2=1$ in the first inequality. The last inequality follows from the weight and input bounds and $s_0d_K\geq 1$.
The boundedness follows from
\begin{align*}
\norm*{D_x \log\pi_{\tau}^1 (a_1 \mid s)}^2 \leq 2\norm*{W^N}^2\norm*{P_1}^2 \leq B_N^2
\end{align*}
and
\begin{align*}
\norm*{D_{\tau^N} \log\pi_{\tau}^1 (a_1 \mid s)}^2 \leq 2\norm*{[x_1^\top\,1]}^2 \leq B_{\tau^N}^2.
\end{align*}
\end{proof}

Having established Lipschitz and boundedness properties for the global and local heads, we now combine these results with the Lipschitz continuity of the MPNN. This yields a Lipschitz bound for the full score function.

\begin{proposition}[Lipschitz bound for the score function]\label{prop:lipschitz-score}
Fix a state-action pair $(s,a)\in\mathcal S\times\mathcal A(s)$ and write
\begin{align*}
\theta = (\theta^K,\tau^G,\tau^N),\qquad \overline{\theta} = (\overline{\theta}^K,\overline{\tau}^G,\overline{\tau}^N).
\end{align*}
Then the logarithmic policy with summation readout satisfies
\begin{align*}
|\log\pi_{\theta}(a\mid s)-\log\pi_{\overline{\theta}}(a\mid s)|
\leq B_g\norm*{\theta-\overline{\theta}},
\end{align*}
where
\begin{align*}
B_g^2:= B_{\tau^G}^2 + \eta(a_{\mathrm{glob}})n^2B_{\tau^N}^2
+2\left(B_G^2+\eta(a_{\mathrm{glob}})n^2B_N^2\right)B_{\leq K}^2.
\end{align*}
Moreover, the corresponding score function is Lipschitz continuous, i.e.,
\begin{align*}
\norm*{g_{(s,a)}(\theta)-g_{(s,a)}(\overline{\theta})}
\leq L_g\norm*{\theta-\overline{\theta}},
\end{align*}
with
\begin{align*}
L_g^2 :=&\max \biggl\{{{}_{\tau}L_{\tau}^G}^2 + 4 B_{\leq K}^2 {{}_{x}L_{\tau}^G}^2, \eta(a_{\mathrm{glob}})n^2\left({{}_{\tau}L_{\tau}^N}^2 +  4 B_{\leq K} ^2{{}_{x}L_{\tau}^N}^2\right), \\
&\quad \quad 4 \left(B_{G}^2 + \eta(a_{\mathrm{glob}})n^2 B_{N}^2\right) L_{\leq K}^2  \\
&\quad \quad + \left({{}_{\tau}L_{x}^G}^2+  \eta(a_{\mathrm{glob}})n^2 {{}_{\tau}L_{x}^N}^2 + 4 B_{\leq K}^2 \left({{}_{x}L_x^G}^2 + \eta(a_{\mathrm{glob}})n^2{{}_{x}L_{x}^N}^2\right) \right)B_{\leq K}^2\biggr\}.
\end{align*}
The occurring constants are given by
\begin{align*}
 {{}_xL_{\tau}^G}^2 :=& \frac{3}{2}s_0 \beta_G^2 n^3{d_{K}}+ 4n,\\
{{}_{x}L_{x}^G}^2 :=& \frac{3}{2} \beta_G^4 n^2,\\
{{}_{\tau}L_{\tau}^G}^2 :=& \frac{3}{2}s_0 n^2{d_{K}}(s_0n^2{d_{K}}+1), \\
{{}_{\tau}L_{x}^G}^2 :=&  \frac{3}{2} \beta_G^2n (s_0n^2{d_{K}}+1)+4n,\\   
{B_{G}^2}:=& 2 \beta_G^2n, \\
{B_{\tau^G}^2} :=& 2 \left( s_0 n^2d_K+1\right), \\
{{}_{x}L_{\tau}^N}^2 :=& \frac{3}{2} s_0\beta_N^2{d_{K}} +4, \\
{{}_{x}L_{x}^N}^2 :=& \frac{3}{2} \beta_N^4,\\
{{}_{\tau}L_{\tau}^N}^2 :=& \frac{3}{2}s_0 {d_{K}} \left( s_0 {d_{K}} +1 \right), \\
{{}_{\tau}L_{x}^N}^2 :=& \frac{3}{2}\beta_N^2 \left( s_0 {d_{K}} +1 \right) + 4, \\ 
{B_{N}^2} :=& 2 \beta_N^2, \\
{B_{\tau^N}^2} :=& 2 (s_0d_K+1).
\end{align*}
\end{proposition}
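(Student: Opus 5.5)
We combine the head estimates of Lemmas~\ref{lemm:lipschitz-graph-head} and~\ref{lemm:lipschitz-node} with the encoder estimates of Proposition~\ref{prop:lipschitz-GNN} through the chain rule. By \eqref{eq:logpolicy}, $\log\pi_\theta(a\mid s)=\log\pi^G_{\tau^G}(a_{\mathrm{glob}}\mid x^{(K)})+\eta(a_{\mathrm{glob}})\sum_{i=1}^n\log\pi^i_{\tau^N}(a_i\mid x^{(K)})$ with $x^{(K)}=f^K(\theta^K,x^{(0)},E)$. Since the heads depend only on their own parameters and on $\theta^K$ through $x^{(K)}$, the full gradient has three blocks:
\[
g_{(s,a)}(\theta)=\Bigl(\bigl(D_x\ell^G+\eta\textstyle\sum_i D_x\ell^i\bigr)D_{\theta^K}f^K,\; D_{\tau^G}\ell^G,\; \eta\textstyle\sum_i D_{\tau^N}\ell^i\Bigr),
\]
where $\ell^G,\ell^i$ denote the log-heads evaluated at $x^{(K)}$. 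All inputs satisfy $\norm*{x^{(K)}}^2\le s_0nd_K$ and $\norm*{P_ix^{(K)}}^2\le s_0d_K$, because $\sigma$ is bounded (and $s_0\ge f_{\max}^2$ covers $K=0$). Hence the head lemmas apply at every point.

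\emph{First claim.} We bound $\norm*{g_{(s,a)}(\theta)}^2$ uniformly and invoke Lemma~\ref{lemm:Schrankensatz}, using that the admissible parameter set can be taken convex (e.g.\ the box $\Theta$). The $\tau^G$ block is bounded by $B_{\tau^G}^2$. For the $\tau^N$ block, Cauchy--Schwarz gives $\norm*{\sum_i D_{\tau^N}\ell^i}^2\le n\sum_i B_{\tau^N}^2=n^2B_{\tau^N}^2$. For the encoder block, $(u+v)^2\le 2u^2+2v^2$ together with the same Cauchy--Schwarz step gives $\norm*{D_x\ell^G+\eta\sum_iD_x\ell^i}^2\le 2(B_G^2+\eta n^2B_N^2)$, which is then multiplied by $B_{\leq K}^2$. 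Summing the squared blocks yields exactly the stated $B_g^2$.

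\emph{Second claim.} Write $x=f^K(\theta^K)$ and $\bar x=f^K(\bar\theta^K)$, and note $\norm*{x-\bar x}\le B_{\leq K}\norm*{\theta^K-\bar\theta^K}$ by Lemma~\ref{lemm:Schrankensatz}.
\begin{itemize}
\item \textbf{$\tau^G$ block.} Lemma~\ref{lemm:lipschitz-graph-head} gives ${}_\tau L_\tau^{G\,2}\norm*{\tau^G-\bar\tau^G}^2+{}_\tau L_x^{G\,2}B_{\leq K}^2\norm*{\theta^K-\bar\theta^K}^2$.
\item \textbf{$\tau^N$ block.} Cauchy--Schwarz followed by Lemma~\ref{lemm:lipschitz-node} gives the same form with an extra factor $n^2$.
\item \textbf{Encoder block.} Expand $\Phi D f^K-\bar\Phi D\bar f^K=\Phi(Df^K-D\bar f^K)+(\Phi-\bar\Phi)D\bar f^K$, where $\Phi$ denotes the combined head input-derivative. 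The first part is bounded by $2(B_G^2+\eta n^2B_N^2)L_{\leq K}^2$. For the second part, bound $\norm*{\Phi-\bar\Phi}^2$ by $2\bigl({}_xL^{G\,2}_\tau\norm*{\tau^G-\bar\tau^G}^2+{}_xL^{G\,2}_x\norm*{x-\bar x}^2\bigr)$ plus $n^2$ times the analogous local terms, and multiply by $B_{\leq K}^2$.
\end{itemize}
A further factor $2$ from splitting the encoder difference gives the constants $4$.

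Summing the three squared blocks and collecting the coefficients of $\norm*{\tau^G-\bar\tau^G}^2$, $\norm*{\tau^N-\bar\tau^N}^2$ and $\norm*{\theta^K-\bar\theta^K}^2$, we bound the total by the maximum coefficient times $\norm*{\theta-\bar\theta}^2$. This reproduces the three entries of the stated maximum. When $\eta(a_{\mathrm{glob}})=0$, all local terms drop out.

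The main obstacle is bookkeeping rather than any conceptual issue. We must track which coefficient multiplies which parameter block, so that cross terms such as ${}_xL_\tau^{G}B_{\leq K}$ land in the $\tau$-rows and ${}_\tau L_x$ terms land in the $\theta^K$-row. We also need the factors $2$ and $4$ from the repeated elementary inequalities to match the stated constants, and we must confirm that the input-norm hypotheses of the head lemmas hold at both $\theta$ and $\bar\theta$.
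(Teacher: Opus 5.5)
Your proposal is correct and follows the paper's proof essentially step for step: the same three-block decomposition of the score, the same Cauchy--Schwarz factor $n^2$ for the local heads, the same split $\Phi(Df^K-D\bar f^K)+(\Phi-\bar\Phi)D\bar f^K$ producing the factors $4$, and the same uniform gradient bound combined with Lemma~\ref{lemm:Schrankensatz} to obtain $B_g$. You also state explicitly two facts the paper uses only tacitly: that the admissible parameter set is convex, as Lemma~\ref{lemm:Schrankensatz} requires, and that the input-norm hypotheses of the head lemmas hold at both $\theta$ and $\bar\theta$. This makes your version slightly more careful than the paper's.
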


\begin{proof}
Fix $s=(X,E)$ and set
\begin{align*}
x^{(K)}:=f^K(\theta^K,X,E),
\qquad 
\overline{x}^{(K)}:=f^K(\overline{\theta}^K,X,E).
\end{align*}
Since the score-function difference decomposes as
\begin{align*}
\norm*{g_{(s,a)}(\theta) - g_{(s,a)}(\overline{\theta})}^2 =& \norm*{\nabla_{\tau^G} \log\pi_\theta(a\mid s) - \nabla_{\tau^G} \log\pi_{\overline{\theta}}(a\mid s)}^2 \\
&+ \norm*{\nabla_{\tau^N} \log\pi_\theta(a\mid s) - \nabla_{\tau^N} \log\pi_{\overline{\theta}}(a\mid s)}^2 \\
&+ \norm*{\nabla_{\theta^{K}} \log\pi_\theta(a\mid s) - \nabla_{\theta^{K}} \log\pi_{\overline{\theta}}(a\mid s)}^2,
\end{align*}
we bound each contribution separately.
By Lemma~\ref{lemm:lipschitz-graph-head} we have
\begin{align*}
\norm*{\nabla_{\tau^G} \log\pi_\theta(a\mid s) - \nabla_{\tau^G} \log\pi_{\overline{\theta}}(a\mid s)}^2 =& \norm*{D_{\tau^G} \log\pi_\theta^G(a_{\mathrm{glob}}\mid s) - D_{\tau^G} \log\pi_{\overline{\theta}}^G(a_{\mathrm{glob}}\mid s)}^2 \\
&\leq {{}_{\tau}L_{\tau}^G}^2 \norm*{\tau^G-\overline{\tau}^G}^2 + {{}_{\tau}L_{x}^G}^2 \norm*{x^{(K)}-\overline{x}^{(K)}}^2.
\end{align*}
Similarly, by Lemma~\ref{lemm:lipschitz-node},
\begin{align*}
\norm*{\nabla_{\tau^N} \log\pi_\theta(a\mid s) - \nabla_{\tau^N} \log\pi_{\overline{\theta}}(a\mid s)}^2 =& \eta(a_{\mathrm{glob}}) \norm*{D_{\tau^N} \sum_{i=1}^n \log\pi_\theta^i(a_i\mid s) - D_{\tau^N} \sum_{i=1}^n\log\pi_{\overline{\theta}}^i(a_i\mid s)}^2 \\
\leq& \eta(a_{\mathrm{glob}}) n \sum_{i=1}^n \norm*{D_{\tau^N} \log\pi_\theta^i(a_i\mid s) - D_{\tau^N}\log\pi_{\overline{\theta}}^i(a_i\mid s)}^2 \\
\leq& \eta(a_{\mathrm{glob}}) n^2 \bigl({{}_{\tau}L_{\tau}^N}^2 \norm*{\tau^N-\overline{\tau}^N}^2 + {{}_{\tau}L_{x}^N}^2 \norm*{x^{(K)}-\overline{x}^{(K)}}^2\bigr).
\end{align*}
For the term involving the MPNN parameters, the chain rule gives
\begin{align*}
&\norm*{\nabla_{\theta^{K}} \log\pi_\theta(a\mid s) - \nabla_{\theta^{K}} \log\pi_{\overline{\theta}}(a\mid s)}^2 \\
&\qquad\qquad = \norm*{D_x \log\pi_{\theta} (a \mid s) D_{\theta^{K}} f^K (\theta^{K},X,E) }\\
&\qquad\qquad \,\, \left. - D_x \log\pi_{\overline{\theta}} (a \mid s) D_{\theta^{K}} f^K (\overline{\theta}^{K},X,E)\right\|^2 \\
&\qquad\qquad\leq 2\norm*{D_x \log\pi_{\theta} (a \mid s) - D_x \log\pi_{\overline{\theta}} (a \mid s)}^2\norm*{D_{\theta^{K}} f^K (\overline{\theta}^{K},X,E)}^2 \\
&\qquad\qquad \,\,+ 2\norm*{D_x \log\pi_{\theta} (a \mid s)}^2\norm*{D_{\theta^{K}} f^K (\theta^{K},X,E) - D_{\theta^{K}} f^K (\overline{\theta}^{K},X,E)}^2 \\
&\qquad\qquad = 2\left( (i) + (ii)\right).
\end{align*}

Using the decompositions in \eqref{eq:logpolicy}, \eqref{eq:graph-head-input}, and \eqref{eq:node-input}, we find that
\begin{align*}
\norm*{D_x \log\pi_{\theta} (a \mid s) - D_x \log\pi_{\overline{\theta}} (a \mid s)}^2 
=& \biggl\|D_x \log\pi_{\theta}^G(a_{\mathrm{glob}} \mid s) - D_x \log\pi_{\overline{\theta}}^G(a_{\mathrm{glob}} \mid s) \\
&+ \eta(a_{\mathrm{glob}}) \biggl(\sum_{i=1}^n D_x \log\pi_{\theta}^i (a_i \mid s) - \sum_{i=1}^n D_x \log\pi_{\overline{\theta}}^i (a_i \mid s)\biggr)\biggr\|^2 \\
\leq& 2\left( {{}_{x}L_{\tau}^G}^2 \norm*{\tau^G-\overline{\tau}^G}^2 + {{}_{x}L_x^G}^2 \norm*{x^{(K)}-\overline{x}^{(K)}}^2\right) \\
&+ \eta(a_{\mathrm{glob}}) 2n^2 \left({{}_{x}L_{\tau}^N}^2 \norm*{\tau^N-\overline{\tau}^N}^2 + {{}_{x}L_{x}^N}^2 \norm*{x^{(K)}-\overline{x}^{(K)}}^2 \right).
\end{align*}
This implies
\begin{align*}
(i) \leq& 2 B_{\leq K}^2 \left( {{}_{x}L_{\tau}^G}^2 \norm*{\tau^G-\overline{\tau}^G}^2 + {{}_{x}L_x^G}^2 \norm*{x^{(K)}-\overline{x}^{(K)}}^2 \right.\\
&\left. \qquad\qquad\qquad\quad+ \eta(a_{\mathrm{glob}}) n^2 \left({{}_{x}L_{\tau}^N}^2 \norm*{\tau^N-\overline{\tau}^N}^2 + {{}_{x}L_{x}^N}^2 \norm*{x^{(K)}-\overline{x}^{(K)}}^2 \right)\right).
\end{align*}

By Proposition~\ref{prop:lipschitz-GNN}, the Jacobian of the MPNN satisfies
\begin{align*}
\norm*{D_{\theta^{K}}f^K (\theta^{K},X,E) - D_{\theta^{K}}  f^K (\overline{\theta}^{K},X,E)}^2 
\leq&  L_{\leq K}^2 \norm*{\theta^{K}-\overline{\theta}^{K}}^2.
\end{align*}
Hence, 
\begin{align*}
(ii) \leq 2 \left(B_{G}^2 + \eta(a_{\mathrm{glob}})n^2 B_{N}^2\right) L_{\leq K}^2 \norm*{\theta^{K}-\overline{\theta}^{K}}^2.
\end{align*}
Since $f^K(\theta^K,X,E)$ is Lipschitz with constant $B_{\leq K}$, we have 
\begin{align*}
\norm*{x^{(K)}-\overline{x}^{(K)}}^2 \leq B_{\leq K}^2 \norm*{\theta^K-\overline{\theta}^{K}}^2.
\end{align*}
Collecting all the above yields
\begin{align*}
\bigl\|g_{(s,a)}(\theta) -& g_{(s,a)}(\overline{\theta})\bigr\|^2 \\
\leq& {{}_{\tau}L_{\tau}^G}^2 \norm*{\tau^G-\overline{\tau}^G}^2 + {{}_{\tau}L_{x}^G}^2 \norm*{x^{(K)}-\overline{x}^{(K)}}^2 \\
&+ \eta(a_{\mathrm{glob}}) n^2 \left({{}_{\tau}L_{\tau}^N}^2 \norm*{\tau^N-\overline{\tau}^N}^2 + {{}_{\tau}L_{x}^N}^2 \norm*{x^{(K)}-\overline{x}^{(K)}}^2\right)\\
&+ 4 B_{\leq K}^2 \left( {{}_{x}L_{\tau}^G}^2 \norm*{\tau^G-\overline{\tau}^G}^2 + {{}_{x}L_x^G}^2 \norm*{x^{(K)}-\overline{x}^{(K)}}^2\right) \\
&+ \eta(a_{\mathrm{glob}}) 4 B_{\leq K}^2 n^2 \left({{}_{x}L_{\tau}^N}^2 \norm*{\tau^N-\overline{\tau}^N}^2 + {{}_{x}L_{x}^N}^2 \norm*{x^{(K)}-\overline{x}^{(K)}}^2 \right) \\
&+ 4 \left(B_{G}^2 + \eta(a_{\mathrm{glob}})n^2 B_{N}^2\right) L_{\leq K}^2 \norm*{\theta^{K}-\overline{\theta}^{K}}^2 \\
\leq& \left({{}_{\tau}L_{\tau}^G}^2 + 4 B_{\leq K}^2 {{}_{x}L_{\tau}^G}^2 \right) \norm*{\tau^G-\overline{\tau}^G}^2\\
&+ \eta(a_{\mathrm{glob}})n^2\left({{}_{\tau}L_{\tau}^N}^2 +  4 B_{\leq K} ^2{{}_{x}L_{\tau}^N}^2\right) \norm*{\tau^N-\overline{\tau}^N}^2 \\
&+ 4 \left(B_{G}^2 + \eta(a_{\mathrm{glob}})n^2 B_{N}^2\right) L_{\leq K}^2 \norm*{\theta^{K}-\overline{\theta}^{K}}^2 \\
&+ \left({{}_{\tau}L_{x}^G}^2+ \eta(a_{\mathrm{glob}}) n^2 {{}_{\tau}L_{x}^N}^2 + 4 B_{\leq K}^2 \left({{}_{x}L_x^G}^2 + \eta(a_{\mathrm{glob}})n^2{{}_{x}L_{x}^N}^2\right) \right)B_{\leq K}^2 \norm*{\theta^{K}-\overline{\theta}^{K}}^2 \\
\leq& \max \biggl\{{{}_{\tau}L_{\tau}^G}^2 + 4 B_{\leq K}^2 {{}_{x}L_{\tau}^G}^2, \eta(a_{\mathrm{glob}})n^2\left({{}_{\tau}L_{\tau}^N}^2 +  4 B_{\leq K} ^2{{}_{x}L_{\tau}^N}^2\right), \\
&\quad \qquad 4 \left(B_{G}^2 + \eta(a_{\mathrm{glob}})n^2 B_{N}^2\right) L_{\leq K}^2  \\
&\quad \qquad + \left({{}_{\tau}L_{x}^G}^2+  \eta(a_{\mathrm{glob}})n^2 {{}_{\tau}L_{x}^N}^2 + 4 B_{\leq K}^2 \left({{}_{x}L_x^G}^2 + \eta(a_{\mathrm{glob}})n^2{{}_{x}L_{x}^N}^2\right) \right)B_{\leq K}^2\biggr\} \norm*{\theta-\overline{\theta}}^2 \\
\leq& L_g^2 \norm*{\theta-\overline{\theta}}^2.
\end{align*}

The score function is also bounded, since
\begin{align*}
\norm*{g_{(s,a)}(\theta)}^2 =& \norm*{\nabla_{\tau^G} \log\pi_\theta(a\mid s)}^2 + \norm*{\nabla_{\tau^N} \log\pi_\theta(a\mid s)}^2 + \norm*{\nabla_{\theta^{K}} \log\pi_\theta(a\mid s)}^2    \\
\leq& B_{\tau^G}^2 + \eta(a_{\mathrm{glob}}) n^2 B_{\tau^N}^2 \\
&+ \norm*{D_x \log\pi_{\theta}^G (a_{\mathrm{glob}} \mid s) + \eta(a_{\mathrm{glob}}) \sum_{i=1}^n D_x \log\pi_{\theta}^i (a_i \mid s)}^2 \norm*{D_{\theta^{K}} f^K (\theta^{K},X,E)}^2\\
\leq& B_{\tau^G}^2 + \eta(a_{\mathrm{glob}}) n^2 B_{\tau^N}^2 + 2\left(B_G^2 + \eta(a_{\mathrm{glob}}) n^2 B_N^2\right) B_{\leq K}^2.
\end{align*}
Now Lemma~\ref{lemm:Schrankensatz} implies that $\theta\mapsto \log\pi_\theta(a\mid s)$ is Lipschitz continuous with constant $B_g$.
\end{proof}

We are now in a position to prove Theorem~\ref{theo:score-explicit}.
\begin{proof}[Proof of Theorem~\ref{theo:score-explicit}]
Under the uniform bounds used for the asymptotic estimates, Theorem~\ref{theo:explicit-bounds} yields
\begin{align*}
B_{\leq K}^2
&=\mathcal O\left(ndS_K(q)\right),\\
L_{\leq K}^2
&=\mathcal O\left(n^2d^2G_K(q)\right).
\end{align*}
The constants in Proposition~\ref{prop:lipschitz-score} are of the order
\begin{IEEEeqnarray*}{rClrClrClrCl}
{}_{\tau}L_{\tau}^G{}^2 &=& \mathcal{O}(n^4d^2), &\qquad
{}_{x}L_{\tau}^G{}^2 &=& \mathcal{O}(n^3d), &\qquad
{}_{\tau}L_{x}^G{}^2 &=& \mathcal{O}(n^3d), &\qquad
{}_{x}L_{x}^G{}^2 &=& \mathcal{O}(n^2), \\
{}_{\tau}L_{\tau}^N{}^2 &=& \mathcal{O}(d^2), &
{}_{x}L_{\tau}^N{}^2 &=& \mathcal{O}(d), &
{}_{\tau}L_{x}^N{}^2 &=& \mathcal{O}(d), &
{}_{x}L_{x}^N{}^2 &=& \mathcal{O}(1), \\
B_{\tau^G}^2 &=& \mathcal{O}(n^2d), &
B_{\tau^N}^2 &=& \mathcal{O}(d), &
B_G^2 &=& \mathcal{O}(n), &
B_N^2 &=& \mathcal{O}(1).
\end{IEEEeqnarray*}
The claims follow immediately by substituting these into Proposition~\ref{prop:lipschitz-score} and using that $S_K(q)\geq 1$ as well as $S_K(q)^2=\mathcal{O}(G_K(q))$.
\end{proof}

\subsection{Proofs for Subsection~\ref{subsec:REINFORCE}}
\label{app:proofs-reinforce}

We first note that the dependence of the one-step reward on the next state does not affect the policy-gradient arguments used below. Indeed, for any policy $\pi$, we may define the expected one-step reward
$$\bar r(s,a):=\int_{\mathcal S} r(s,a,s')\,p(\mathrm ds'\mid s,a).$$
Setting $\bar R_t:=\bar r(S_t,A_t)$, we have $\E_{s_0}^\pi[R_t\mid S_t,A_t]=\bar R_t$ almost surely. Hence, boundedness of the rewards, the tower property, and interchange of expectation and summation yield
\begin{align*}
\E_{s_0}^\pi\left[\sum_{t=0}^\infty \gamma^t R_t\right]
&= \sum_{t=0}^\infty \gamma^t \E_{s_0}^\pi[R_t]
= \sum_{t=0}^\infty \gamma^t \E_{s_0}^\pi\left[\E_{s_0}^\pi[R_t\mid S_t,A_t]\right] \\
&= \sum_{t=0}^\infty \gamma^t \E_{s_0}^\pi[\bar R_t]
= \E_{s_0}^\pi\left[\sum_{t=0}^\infty \gamma^t \bar R_t\right].
\end{align*}
Thus, for the expected discounted-return objective, the next-state-dependent reward $r(s,a,s')$ can be replaced by $\bar r(s,a)$.

\subsubsection{Almost sure convergence}
\label{app:almost-sure-convergence}
\begin{proof}[Proof of Theorem~\ref{theo:projected-reinforce-convergence}]
By Definition~\ref{def:box-parameters}, the parameter set $\Theta$ is compact. Moreover, the score function is bounded and Lipschitz continuous on $\Theta$ by the uniform bounds from Subsection~\ref{subsec:Lipschitz}. Hence Lemma~4.2 and Theorem~4.3 in \cite{Zhang} imply that $\nabla J$ is bounded and Lipschitz continuous on $\Theta$.

We verify the assumptions of Theorem~4.1 in \cite{Borowski}. The update in Algorithm~\ref{alg:projected-reinforce} can be written as
$$\theta_{k+1}=\Pi_\Theta\bigl(\theta_k+\alpha_k(h(\theta_k)+e_k)\bigr),$$
where
$$h(\theta):=\nabla J(\theta),\qquad e_k:=\bar g_k-\nabla J(\theta_k).$$
Further, let
$$V(\theta):=C-J(\theta),\qquad C:=\max_{\theta\in\Theta}J(\theta).$$
By compactness of $\Theta$ and continuity of $J$, the constant $C$ is finite.

By Moreau's decomposition theorem, we obtain for every $\theta\in\Theta$ that
\begin{align*}
\bigl\langle \nabla V(\theta),\Pi_{T_\Theta(\theta)}(\nabla J(\theta))\bigr\rangle
&= -\bigl\langle \nabla J(\theta),\Pi_{T_\Theta(\theta)}(\nabla J(\theta))\bigr\rangle \\
&= -\bigl\langle \Pi_{T_\Theta(\theta)}(\nabla J(\theta))
+\Pi_{N_\Theta(\theta)}(\nabla J(\theta)),\Pi_{T_\Theta(\theta)}(\nabla J(\theta))\bigr\rangle \\
&= -\bigl\|\Pi_{T_\Theta(\theta)}(\nabla J(\theta))\bigr\|^2 \leq 0.
\end{align*}
Thus
$$S_\Theta=\left\{\theta\in\Theta:\bigl\langle \nabla V(\theta),\Pi_{T_\Theta(\theta)}(\nabla J(\theta))\bigr\rangle=0\right\}.$$

Assuming for the moment that
$$\sum_{k=0}^\infty \alpha_k e_k$$
converges almost surely, all assumptions of Theorem~4.1 in \cite{Borowski} are satisfied, and the claim follows. It remains to prove this convergence.

Let $(\mathcal F_k)_{k\in\N_0}$ be the natural filtration generated by the information available before iteration $k$. Then $\theta_k$ is $\mathcal F_k$-measurable, while $\bar g_k$ and $e_k$ are $\mathcal F_{k+1}$-measurable. By Theorem~4.3 in \cite{Zhang}, each estimator $\widehat g_{k,i}$ satisfies
\begin{align*}
\E\left[\widehat g_{k,i}\mid\mathcal F_k\right]=\nabla J(\theta_k) \quad \text{and} \quad \norm*{\widehat g_{k,i}}\leq C_g \quad\text{almost surely}
\end{align*}
for some constant $C_g>0$. Hence $\bar g_k$ is an unbiased estimator of $\nabla J(\theta_k)$ and bounded by $C_g$, implying that
$$M_n:=\sum_{k=0}^{n-1}\alpha_k e_k$$
is a martingale with respect to $(\mathcal F_n)_{n\in\N}$. Further, as $\bar g_k$ and $\nabla J$ are bounded, $e_k$ is almost surely bounded by some constant $C_e>0$. Hence
$$\E\left[\norm*{M_n}^2\right] \leq C_e^2 \sum_{k=0}^{n-1}\alpha_k^2 \leq C_e^2\sum_{k=0}^{\infty}\alpha_k^2 < \infty.$$
Thus, the martingale $(M_n)_{n\in\N}$ is bounded in $L^2$ and therefore converges almost surely. This completes the proof.
\end{proof}

\subsubsection{Finite-time bound}
\label{app:finite-time-bound}
\begin{proof}[Proof of Theorem~\ref{theo:finite-time-constrained-stationarity}]
Let $(\mathcal F_k)_{k\in\N_0}$ be the natural filtration generated by the information available before iteration $k$. As in the previous proof, the single-sample estimators are conditionally unbiased.
$$\E\left[\widehat g_{k,i}\mid\mathcal F_k\right]=\nabla J(\theta_k).$$
Moreover, by the definition of $\sigma$ we have
\begin{align*}
\norm*{\widehat g_{k,i}} &\leq \frac{B_g}{1-\gamma}\sum_{t=0}^{T'_{k,i}}\gamma^{t/2}\left|\widetilde R_t^{(i)}\right|\leq \frac{B_gR_{\max}}{(1-\gamma)(1-\sqrt{\gamma})} = \sigma,
\end{align*}
which yields
$$\E\left[\norm*{\widehat g_{k,i}-\nabla J(\theta_k)}^2\mid\mathcal F_k\right]\leq \sigma^2.$$
Since the batch samples are conditionally independent, this gives
\begin{align}
\label{eq:minibatch-variance}
\E\left[\norm*{\bar g_k-\nabla J(\theta_k)}^2\mid\mathcal F_k\right]\leq \frac{\sigma^2}{m_k}.
\end{align}

We apply Corollary~2 of \cite{Ghadimi2016} to the minimization problem
$$\min_{\theta\in\Theta}-J(\theta).$$
In their notation, this corresponds to $X=\Theta$, $h\equiv 0$, and the Euclidean prox-function
$$V(u,\theta)=\frac12\norm*{u-\theta}^2,$$
whose strong convexity modulus equals one. With
$$G(\theta_k,\xi_{k,i}):=-\widehat g_{k,i},\qquad G_k:=\frac1{m_k}\sum_{i=1}^{m_k}G(\theta_k,\xi_{k,i})=-\bar g_k,$$
their projected stochastic gradient step in their RSPG algorithm becomes
$$\theta_{k+1}=\Pi_\Theta(\theta_k-\alpha_kG_k)=\Pi_\Theta(\theta_k+\alpha_k\bar g_k).$$
Thus Algorithm~\ref{alg:projected-reinforce} is covered by their framework. Since $\nabla J$ is Lipschitz continuous with constant $L_J$, Corollary~2 of \cite{Ghadimi2016} yields
\begin{align}
\label{eq:bound-corollary}
\E\left[\alpha_\tau^{-2}\norm*{\theta_{\tau+1}-\theta_\tau}^2\right]
\leq
\frac{\Delta_0+\sigma^2\sum_{k=0}^{N-1}\frac{\alpha_k}{m_k}}{W_N}.
\end{align}

It remains to relate the projected step to the tangential component of the gradient. By the characterization of the Euclidean projection, we obtain for all $\theta \in \Theta$
\begin{align*}
\langle \theta_k + \alpha_k \bar g_{k}- \theta_{k+1},  \theta-\theta_{k+1} \rangle \leq 0
\quad \Leftrightarrow \quad
\theta_k + \alpha_k \bar g_{k}-\theta_{k+1} 
\in N_\Theta(\theta_{k+1}).
\end{align*}
Hence
$$\bar g_k-\frac{1}{\alpha_k}(\theta_{k+1}-\theta_k)\in N_\Theta(\theta_{k+1}).$$
Using Moreau's decomposition for the tangent and normal cones of the closed convex set $\Theta$, we obtain
\begin{align}
\label{eq:tangent-bound-single-k}
\norm*{\Pi_{T_\Theta(\theta_{k+1})}\bigl(\nabla J(\theta_{k+1})\bigr)}
&= \operatorname{dist}\left(\nabla J(\theta_{k+1}),N_\Theta(\theta_{k+1})\right) \notag \\
&\leq \norm*{\nabla J(\theta_{k+1})-\left(\bar g_k-\frac{1}{\alpha_k}(\theta_{k+1}-\theta_k)\right)} \notag \\
&\leq \norm*{\nabla J(\theta_{k+1}) - \nabla J(\theta_k)} + \norm*{\nabla J(\theta_k) - \bar g_k } + \frac{1}{\alpha_k} \norm*{\theta_{k+1} - \theta_k} \notag \\
&\leq \left(L_J+\frac{1}{\alpha_k}\right)\norm*{\theta_{k+1}-\theta_k} + \norm*{\nabla J(\theta_k)-\bar g_k}.
\end{align}
Finally, taking expectations in \eqref{eq:tangent-bound-single-k}, averaging over the independent random index $\tau$, and using \eqref{eq:bound-corollary} and \eqref{eq:minibatch-variance}, gives 
\begin{align*}
\E\left[\norm*{\Pi_{T_\Theta(\theta_{\tau+1})}\bigl(\nabla J(\theta_{\tau+1})\bigr)}^2\right]
&\leq 2(1+L_J\alpha_{\max})^2 \E\left[\alpha_\tau^{-2}\norm*{\theta_{\tau+1}-\theta_\tau}^2\right] \\
&\quad +2\sum_{k=0}^{N-1}\frac{w_k}{W_N} \E\left[\norm*{\bar g_k-\nabla J(\theta_k)}^2\right]\\
&\leq 2(1+L_J\alpha_{\max})^2 \frac{\Delta_0+\sigma^2\sum_{k=0}^{N-1}\frac{\alpha_k}{m_k}}{W_N} \\
&\quad +2\sigma^2\sum_{k=0}^{N-1}\frac{w_k}{W_Nm_k}.
\end{align*}
This proves the claim.
\end{proof}

\end{document}